\documentclass[12pt,fleqn]{amsart}
\usepackage{amsfonts,amssymb,verbatim,amsmath,amsthm,latexsym,textcomp,amscd}
\usepackage{latexsym,amsfonts,amssymb,epsfig,verbatim}
\usepackage{amsmath,amsthm,amssymb,latexsym,graphics,textcomp}
\usepackage{graphicx}
\usepackage{color}
\usepackage{url}

\usepackage[margin=1in]{geometry}
\usepackage{marginnote}

\usepackage{mathptmx}
\usepackage[T1]{fontenc}
\usepackage{comment}
\usepackage{euscript}

\usepackage{latexsym,epsfig,verbatim}
\usepackage{amsmath,amsthm,amssymb}
\usepackage{colonequals}
\usepackage{units}
\usepackage{leftidx}

\usepackage{url}
\usepackage{color}
\usepackage[colorlinks,citecolor=blue,linkcolor=red!80!black]{hyperref}

\usepackage{accents}

\usepackage{tikz}
\usetikzlibrary{matrix,arrows,decorations.pathmorphing}

\begin{document}

\theoremstyle{plain}
\newtheorem{theorem}{Theorem}[section]
\newtheorem{lemma}[theorem]{Lemma}
\newtheorem{corollary}[theorem]{Corollary}

\newtheorem{conjecture}[theorem]{Conjecture}
\newtheorem{assumption}[theorem]{Assumption}
\newtheorem{proposition}[theorem]{Proposition}
\newtheorem{observation}[theorem]{Observation}
\newtheorem{fact}[theorem]{Fact}
\newtheorem{claim}[theorem]{Claim}
\newtheorem{prop}[theorem]{Proposition}

\newtheorem{cor}[theorem]{Corollary}
\newtheorem{definition}[theorem]{Definition}
\newtheorem{defn}[theorem]{Definition}
\newtheorem{conj}[theorem]{Conjecture}

\newtheorem{defth}[theorem]{Definition-Theorem}
\newtheorem{obs}[theorem]{Observation}
\newtheorem{rmark}[theorem]{Remark}
\newtheorem{qn}[theorem]{Question}
\newtheorem{theo}[theorem]{Theorem}
\newtheorem{thmbis}{Theorem}
\newtheorem{dfn}[theorem]{Definition} 
\newtheorem{defi}[theorem]{Definition} 
\newtheorem{coro}[theorem]{Corollary}
\newtheorem{hypo}[theorem]{Hypothesis}
\newtheorem{corbis}{Corollary}
\newtheorem{propbis}{Proposition} 
\newtheorem*{prop*}{Proposition} 
\newtheorem{lem}[theorem]{Lemma} 
\newtheorem{lembis}{Lemma} 
\newtheorem{claimbis}{Claim} 

\newtheorem{factbis}{Fact} 
\newtheorem{qst}[theorem]{Question} 
\newtheorem{qstbis}{Question} 
\newtheorem{pb}[theorem]{Problem} 
\newtheorem{pbbis}{Problem} 
\newtheorem{question}[theorem]{Question}
\newtheorem{rem}[theorem]{Remark}
\newtheorem{rmk}[theorem]{Remark}
\newtheorem{remark}[theorem]{Remark}
\newtheorem{example}[theorem]{Example}
\newtheorem{eg}[theorem]{Example}
\newtheorem{notation}[theorem]{Notation}

\newtheorem*{case}{Case}

\newcommand{\define}[1]{\textbf{#1}}
\newcommand{\ext}{\mathrm{ext}}

\newcommand{\R}{\mathbb{R}}
\newcommand{\Z}{\mathbb{Z}}
\newcommand{\Q}{\mathbb{Q}}
\newcommand{\N}{\mathbb{N}}

\newcommand{\floor}[1]{\left\lfloor#1\right\rfloor}
\newcommand{\ceil}[1]{\left\lceil#1\right\rceil}
\newcommand{\norm}[1]{\left\Vert#1\right\Vert}
\newcommand{\abs}[1]{\left\vert#1\right\vert}
\newcommand{\inv}{^{-1}}
\newcommand{\hhat}{\widehat}
\newcommand{\boundary}{\partial}
\newcommand{\C}{{\mathbb C}}
\newcommand{\integers}{{\mathbb Z}}
\newcommand{\natls}{{\mathbb N}}
\newcommand{\bbN}{{\mathbb N}}
\newcommand{\hypp}{{{\mathbb H}^2}}
\newcommand{\hyps}{{{\mathbb H}^3}}
\newcommand{\ratls}{{\mathbb Q}}
\newcommand{\reals}{{\mathbb R}}
\newcommand{\bbR}{{\mathbb R}}
\newcommand{\lhp}{{\mathbb L}}
\newcommand{\tube}{{\mathbb T}}
\newcommand{\cusp}{{\mathbb P}}
\newcommand\AAA{{\mathcal A}}
\newcommand\BB{{\mathcal B}}
\newcommand\CC{{\mathcal C}}
\newcommand\DD{{\mathcal D}}
\newcommand\EE{{\mathcal E}}
\newcommand\FF{{\mathcal F}}
\newcommand\GG{{\mathcal G}}
\newcommand\HH{{\mathcal H}}
\newcommand\II{{\mathcal I}}
\newcommand\JJ{{\mathcal J}}
\newcommand\KK{{\mathcal K}}
\newcommand\LL{{\mathcal L}}
\newcommand\MM{{\mathcal M}}
\newcommand\NN{{\mathcal N}}
\newcommand\OO{{\mathcal O}}
\newcommand\PP{{\mathcal P}}
\newcommand\QQ{{\mathcal Q}}
\newcommand\RR{{\mathcal R}}
\newcommand\SSS{{\mathcal S}}
\newcommand\TT{{\mathcal T}}
\newcommand\UU{{\mathcal U}}
\newcommand\VV{{\mathcal V}}
\newcommand\WW{{\mathcal W}}
\newcommand\XX{{\mathcal X}}
\newcommand\YY{{\mathcal Y}}
\newcommand\ZZ{{\mathcal Z}}
\newcommand{\Out}{\mathrm{Out}}
\newcommand{\Mod}{\mathrm{Mod}}
\newcommand{\hull}{\mathrm{hull}}

\newcommand{\co}{\colon\thinspace}

\newcommand{\emul}[1]{\overset{#1}{\asymp}}
\newcommand{\gmul}[1]{\overset{#1}{\succ}}
\newcommand{\lmul}[1]{\overset{#1}{\prec}}


\newcommand{\free}{\mathbb{F}} 
\newcommand{\factor}{{\EuScript F}} 
\newcommand{\F}{\factor} 
\newcommand{\fc}{\factor} 
\renewcommand{\int}{\mathcal{I}} 

\newcommand{\geod}{\overleftrightarrow}
\newcommand{\I}{\mathbf{I}}
\newcommand{\Ipl}{\I_+}
\newcommand{\Imin}{\I_-}
\newcommand{\J}{\mathbf{J}}
\newcommand{\Jpl}{\J_+}
\newcommand{\Jmin}{\J_-}
\newcommand{\cay}[2]{\mathrm{Cay}({#2}, {#1})} 
\newcommand{\diam}{{\rm diam}}

\newcommand{\Hl}{\{H_\lambda\}_{\lambda \in \Lambda}}
\newcommand{\he}{hyperbolically embedded}
\newcommand{\gen}[1]{\left\langle#1\right\rangle}
\newcommand{\g} {\ensuremath {\gamma}}
\newcommand{\ga}{\Gamma}
\newcommand{\G}{\ga(G, X\sqcup \mathcal H)}
\newcommand{\h}{\hookrightarrow_{h}}
\newcommand{\cH}{\mathcal{H}}
\newcommand{\e}{\varepsilon }
\newcommand{\nei}[2]{N_{#2}(#1)}
\renewcommand{\d}{{\rm d}}
\newcommand{\dx}{{\rm d}_X}
\newcommand{\dhau}{{\rm d}_{\rm Haus}}

\title{Intersection properties of stable subgroups and bounded cohomology}

\author{Yago Antol\'{i}n}
\address{Departamento de Matem\'{a}ticas, Universidad Aut\'{o}noma de Madrid and Instituto de Ciencias Matem\'{a}ticas, CSIC-UAM-UC3M-UCM }
\email{yago.anpi@gmail.com}

\author{Mahan Mj} 

\address{School of Mathematics, Tata Institute of Fundamental Research, Homi Bhabha Road, Mumbai-400005,  India}
\email{mahan.mj@gmail.com; mahan@math.tifr.res.in}

\author{Alessandro Sisto}
\address{Department of Mathematics, ETH Zurich, 8092 Zurich, Switzerland}
\email{sisto@math.ethz.ch}

\author{Samuel J. Taylor}
\address{Department of Mathematics,
Yale University,
10 Hillhouse Ave,
New Haven, CT 06520, U.S.A.}
\email{s.taylor@yale.edu}

\thanks{The first named author is supported by the Juan de la Cierva grant IJCI-2014-22425, and acknowledges partial support from the
Spanish Government through grant number MTM2014-54896-P.
The second author  is partially supported by  a DST J C Bose Fellowship and the fourth author is partially supported by  NSF DMS-1400498.
The second, third and fourth authors were supported in part by the National Science Foundation 
	under Grant No. DMS-1440140 at the Mathematical Sciences Research Institute in Berkeley
	during Fall 2016 program in Geometric Group Theory.
	The first, third and fourth author would like to thank the Isaac Newton Institute for Mathematical Sciences, Cambridge, for support and hospitality during the programme ``Non-Positive Curvature Group Actions and Cohomology'' where work on this paper was undertaken. This work was supported by EPSRC grant no EP/K032208/1.}

\date{\today}


\begin{abstract} 
We show that a finite collection of stable subgroups of a finitely generated group has finite height, finite width and bounded packing.  We then use knowledge about intersections of conjugates to characterize finite families of quasimorphisms on hyperbolically embedded subgroups that can be to simultaneously extended to the ambient group.
\end{abstract}

\maketitle

\section{Introduction}
\label{sec:intro}
There is a well-developed theory of convex cocompact subgroups of Kleinian groups. This  has been generalized, first
to mapping class groups \cite{farb-mosher, kl, ham-cc, mahan-sardar}, and then to $\Out(F_n)$ \cite{dt1, ham-coco, ADT} over the last several years.
Further extending these notions, a general theory of 
subgroup stability
was introduced 
by Durham and Taylor \cite{DT15}.

One of the aims of this paper is to extend some well-known
intersection properties of quasiconvex
subgroups of hyperbolic or relatively hyperbolic  groups  \cite{GMRS, hruska-wise}
to the context of stable subgroups  of finitely generated groups:

\begin{theorem} \label{th:intro_main}
Let $H_1, \cdots, H_l$ be stable subgroups of a finitely generated group.
Then the collection $\HH = \{H_1, \cdots, H_l \}$ satisfies the following:
\begin{enumerate}
\item $\HH$ has finite height.
\item  $\HH$ has finite width.
\item $\HH$ has bounded packing.
\end{enumerate}
\end{theorem}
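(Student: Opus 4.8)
The plan is to pass to the geometry of the Cayley graph $X = \cay{S}{G}$ for a finite generating set $S$ and then to run the counting arguments of Gitik--Mitra--Rips--Sageev and Hruska--Wise, with the Morse property of stable subgroups replacing global hyperbolicity of the ambient group. First I would uniformize the geometric input. Each $H_i$ is stable, hence undistorted and \emph{Morse}, so geodesics with endpoints in $H_i$ stay uniformly close to $H_i$ and are uniformly contracting. Since there are finitely many indices and since left translation by any $g\in G$ is an isometry of $X$ carrying $gH_i$ to $H_i$, every left coset $gH_i$ is Morse with one common gauge $M$. I emphasize that one must work with the \emph{cosets} $gH_i$ (isometric copies of $H_i$) rather than with the conjugate subgroups $gH_ig^{-1}$, whose Morse gauge degenerates with $|g|$ since conjugation is not an isometry. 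The bridge between the two pictures is an elementary computation: if $h$ lies in the (infinite) intersection $gH_ig^{-1}\cap g'H_{i'}g'^{-1}$, then for its powers $h^n$ (or a sequence of elements of unbounded length) one has $h^ng\in gH_i$ and $h^ng'\in g'H_{i'}$, while $\dx(h^ng,h^ng')=|g^{-1}g'|$ is independent of $n$. Hence an infinite intersection of conjugates produces an \emph{unbounded} coarse intersection $\nei{gH_i}{r}\cap\nei{g'H_{i'}}{r}$ of the two cosets, for some radius $r$.

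The technical heart, and the step I expect to be the \textbf{main obstacle}, is a coarse-intersection lemma asserting that the contracting property controls such intersections with \emph{uniform} constants. Concretely, I would prove that there is $r_0=r_0(M,S)$ so that whenever two cosets $C,C'$ (with common gauge $M$) satisfy $\diam\big(\nei{C}{r}\cap\nei{C'}{r}\big)=\infty$ for some $r$, they already satisfy it for $r_0$; moreover the coarse intersection is then within bounded Hausdorff distance of a single $M'$-Morse geodesic that both cosets fellow-travel within $r_0$, and is coarsely a coset of the associated intersection subgroup, which is again Morse. The point is that two contracting geodesics remaining within bounded distance along an unbounded set are forced within the \emph{uniform} distance $r_0(M)$ -- precisely the phenomenon that fails in a flat (two parallel lines in $\Z^2$ at distance $R$ have unbounded coarse intersection only at radius $R$, and indeed lines in $\Z^2$ are not Morse). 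Establishing this with no ambient hyperbolicity, using only the contracting behaviour of geodesics with endpoints in the cosets while the remaining directions of $G$ are uncontrolled, is where the real work lies.

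Granting this lemma, finite width follows quickly. Given $n$ essentially distinct cosets with pairwise infinite intersections of conjugates, the first paragraph gives pairwise unbounded coarse intersections, and the coarse-intersection lemma upgrades these to the uniform radius $r_0$, so the cosets are pairwise within $\dx$-distance $2r_0$. Only boundedly many essentially distinct cosets of the finitely many $H_i$ can be mutually $r_0$-close while fellow-travelling a common Morse geodesic: each such coset is determined by a germ lying in a ball of radius $R(r_0,M)$, and balls in $X$ are finite because $G$ is finitely generated. This bounds $n$, giving finite width; and since any configuration realizing the height is in particular a width configuration (a common infinite intersection is contained in every pairwise intersection), finite height is immediate.

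For bounded packing I would fix $D$ and bound the number of pairwise $D$-close distinct cosets. Here I would exploit that the contracting property furnishes coarse closest-point projections $\pi_C\co X\to C$ satisfying the Bestvina--Bromberg--Fujiwara projection axioms; a projection-complex argument then shows that in any family of pairwise $D$-close cosets all but boundedly many pairs have uniformly bounded mutual projections, and combining this with the coarse-intersection lemma and the finiteness of balls of controlled radius bounds the size of the family by some $N(D)$. Thus the same contracting geometry that drives the coarse-intersection lemma also yields bounded packing, and all three conclusions of the theorem follow.
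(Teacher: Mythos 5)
There are two genuine gaps, and the more serious one is in your deduction of finite width. Your bridging computation is correct (if $h_n\in gH_ig^{-1}\cap g'H_{i'}g'^{-1}$ is an unbounded sequence then the points $h_ng$, $h_ng'$ exhibit an unbounded coarse intersection of the cosets at radius $d(g,g')$ --- this is even more elementary than the paper's route), and the uniformization you want is a true statement: it is exactly Lemma \ref{close_cosets} of the paper. But granting it, your counting step fails: pairwise $2r_0$-closeness of cosets does \emph{not} pin each coset down to ``a germ in a ball of radius $R(r_0,M)$,'' because infinitely many distinct cosets lie within bounded distance of a fixed coset (e.g.\ $hxH$ for all $h\in H$ with $x$ fixed and short). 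The ball-counting trick works only when all cosets in the configuration come uniformly close to a \emph{single} geodesic; that is available for height, where the total intersection is infinite and furnishes a common axis (this is the paper's Proposition \ref{finht}, following GMRS), but in a width configuration the intersections are only pairwise infinite and no common geodesic exists --- you have conflated the two configurations. Bounding the number of pairwise $D$-close distinct cosets is precisely bounded packing, and the paper must prove it by Hruska--Wise's induction on height (Theorem \ref{th:bounded_packing}): translate so $H\in\mathcal{C}$, write the remaining cosets as $hxH$ with $x\in B_D(1)$, use the general fact $N_L(H)\cap N_L(xH)\subseteq N_{L'}(K)$ with $K=H\cap xHx^{-1}$ together with a stable-thin-triangle argument (Lemmas \ref{lem:basic} and \ref{lem:cordes}) to push the configuration into pairwise-close cosets of $K$, which is stable in $H$ (Lemma \ref{int}, via Short's theorem) of strictly smaller height, and induct; width then follows from packing plus Lemma \ref{close_cosets} (Theorem \ref{widthpack}). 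Your plan has no substitute for this induction: the projection-complex sketch both presupposes that closest-point projections to stable cosets of an arbitrary finitely generated group satisfy the Bestvina--Bromberg--Fujiwara axioms (not an off-the-shelf fact in this generality) and, even granted, never derives the bound $N(D)$ --- ``all but boundedly many pairs have bounded mutual projections'' is essentially a restatement of what is to be proved.

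The second gap is the coarse-intersection lemma itself, which you rightly flag as the main obstacle but do not prove, and the contraction argument you gesture at does not obviously close. Every naive implementation (connect endpoints into the cosets and invoke stability of the resulting quasigeodesic) feeds $r$-dependent additive constants into the stability gauge, returning a bound like $f(1,cr)$ rather than a uniform $r_0(M)$; and since stable subsets of general finitely generated groups are only Morse (sublinearly contracting), not uniformly strongly contracting, the strong-contraction bootstrap that would remove the $r$-dependence is unavailable. The paper sidesteps the general two-coset metric statement using group structure plus the Cordes--Durham theorem (Theorem \ref{coco=stable}): an infinite intersection $H_1^{g_1}\cap H_2^{g_2}$ is stable, hence hyperbolic, hence contains an infinite-order Morse element whose endpoints lie in $g_1\Lambda_{H_1}\cap g_2\Lambda_{H_2}$; a bi-infinite geodesic in the weak hull of the intersection then lies in the translated weak hulls of both $H_i$, and the uniform constant $D$ comes from \emph{cocompactness} of each $H_i$ on its weak hull --- a group-theoretic input your purely metric approach lacks. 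Your remaining observations (uniform gauge for cosets, height $\le$ width) are fine, but with width and packing unproved the theorem does not follow from your outline.
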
 

The proof of finiteness of height follows the same line of argument as \cite{GMRS} and has already been used to observe finiteness of height in convex cocompact subgroups of 
mapping class groups  and $\Out(F_n)$ \cite{DM}. 
For this, we use the treatment of stable subgroups initiated by Cordes--Durham \cite{cordesdurham16}, which identifies subgroup stability with a form of convex cocompactness. See Section \ref{sec:coco} for details.
The other two properties in Theorem \ref{th:intro_main} require an essentially different argument, and for this we follow ideas from Hruska--Wise \cite{hruska-wise}. In fact, we establish a somewhat stronger property than bounded packing: For a finite collection of pairwise close stable subsets, there exists a coarse barycenter, or equivalently, stable subsets satisfy a "coarse Helly property" (see Proposition \ref{cbc} for a precise statement).

Combining Theorem \ref{th:intro_main} with existing work of a number of authors, we have the following
(see \cite{cordesdurham16} for an elaboration on these examples):

\begin{cor}\label{cor:egs}
	Let the pair $(G,H)$ of a finitely generated group and a subgroup $H$ satisfy one of the following:
	
	\begin{enumerate}
		\item[(1)] $G$ is hyperbolic and $H$ is quasiconvex in $G$.
	\item[(2)] $G$ is relatively hyperbolic and $H$ is a finitely generated subgroup quasi-isometrically embedded in
	the coned off graph \cite{farb-relhyp}.
	\item[(3)] $G = A(\Gamma)$ is a RAAG with $\Gamma$ a finite  graph which is not a join and $H$ is 
	a finitely generated subgroup quasi-isometrically embedded in the extension graph. 
	\item[(4)] $G = \Mod(S)$ and $H$ is a convex cocompact subgroup.
	\item[(5)] $G = \Out (F_n)$ and $H$ is a convex cocompact subgroup.
	\item[(6)] $H$ is an hyperbolic, hyperbolically embedded subgroup of $G$.
	\end{enumerate}
Then $H$ has finite height, finite width and bounded packing.
\end{cor}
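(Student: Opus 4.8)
The plan is to deduce the corollary directly from Theorem \ref{th:intro_main} applied to the singleton collection $\HH = \{H\}$. Since Theorem \ref{th:intro_main} delivers finite height, finite width and bounded packing for any finite family of stable subgroups, all that remains is to verify, in each of the six cases, that $H$ is a stable subgroup of $G$. Thus the whole content of the proof reduces to a case-by-case identification of $H$ as stable, and for this I would use the characterization of stability in terms of convex cocompactness recorded in Section \ref{sec:coco} following \cite{cordesdurham16}: namely that $H$ is stable precisely when the orbit map is a quasi-isometric embedding whose image is uniformly Morse.

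I would then proceed case by case. Case (1) is the most transparent: when $G$ is hyperbolic every quasi-geodesic is uniformly Morse, so a quasiconvex subgroup is automatically quasi-isometrically embedded with Morse image, hence stable. Cases (4) and (5) are essentially built into the definitions: Durham--Taylor \cite{DT15} show that the stable subgroups of $\Mod(S)$ are exactly the convex cocompact ones, and the analogous statement for $\Out(F_n)$ is established in \cite{ADT}; in both cases there is nothing further to check. Case (6) follows because a subgroup that is both hyperbolic and hyperbolically embedded is undistorted and has Morse geodesics in $G$; here one combines the intrinsic hyperbolicity of $H$ with the defining properties of a hyperbolically embedded subgroup to produce the requisite Morse gauge.

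Cases (2) and (3) are where I expect the real work to lie, and I would treat them together. In each of these $H$ is assumed quasi-isometrically embedded not in $G$ itself but in an auxiliary hyperbolic space: the coned-off Cayley graph \cite{farb-relhyp} in the relatively hyperbolic case, and the extension graph in the RAAG case. The hard part will be to upgrade this to stability in $G$: one must show that geodesics in $G$ between points of $H$ project to uniform quasi-geodesics in the auxiliary graph and remain close to $H$, so that the quasi-isometric embedding into the hyperbolic auxiliary space forces a uniform Morse gauge back in $G$. This pull-back argument is exactly the kind of statement elaborated in \cite{cordesdurham16}, and once it is in hand each such $H$ is stable, whence Theorem \ref{th:intro_main} applies and the corollary follows in all six cases.
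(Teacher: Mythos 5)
Your proposal follows essentially the same route as the paper: the paper's proof likewise reduces everything to Theorem \ref{th:intro_main} by invoking stability of $H$ in each case, citing \cite{ADT} for (2), \cite{kms} for (3), \cite{DT15} for (4), \cite{ADT} again for (5), and \cite{SistoZ} (via Remark \ref{HEareStable}) for (6), while handling (1) by direct appeal to \cite{GMRS} and \cite{hruska-wise} --- though your alternative of running (1) through Theorem \ref{th:intro_main} also works, since quasiconvex subgroups of hyperbolic groups are stable. The one caveat: the ``pull-back'' arguments you sketch for cases (2) and (3) are precisely the content of the cited results of \cite{ADT} and \cite{kms} (not of \cite{cordesdurham16}, which only elaborates on these examples), so in context these should be cited rather than reproved --- your sketch by itself would not constitute a proof of those nontrivial stability theorems.
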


\begin{proof}
Item $(1)$ is the content of \cite{GMRS} and \cite{hruska-wise}. The other items follow from Theorem \ref{th:intro_main} along with work by various authors establishing that $H$ is stable in $G$. For $(2)$ this is due to \cite{ADT}, for $(3)$ this is due to \cite{kms}, for $(4)$ this is due to \cite{DT15},  for $(5)$ this is again \cite{ADT}, and for $(6)$ this is in \cite{SistoZ} (see Remark \ref{HEareStable}).
\end{proof}

We also give a couple of straightforward applications.  Using work in
\cite{sageev1}, \cite{sageev2}, \cite{hruska-wise}, we have the following.
\begin{cor} (See Corollary \ref{fincubcor}.)	Suppose
	$	H$
	is a finitely generated stable codimension $1$
	subgroup
	of a finitely generated group
	$G$.  Then the corresponding
	CAT$(0)$
	cube complex
	is finite dimensional.
\end{cor}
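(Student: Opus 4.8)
The plan is to combine the bounded packing statement of Theorem \ref{th:intro_main}(3), applied to the singleton collection $\HH = \{H\}$, with the standard machinery relating codimension $1$ subgroups, wallspaces, and their dual cube complexes, as developed by Sageev \cite{sageev1, sageev2} and refined by Hruska--Wise \cite{hruska-wise}. Since $H$ is stable in $G$, Theorem \ref{th:intro_main} gives that the conjugates of $H$ have bounded packing: for every $D$ there is $N = N(D)$ such that any collection of distinct cosets $g_1 H, \ldots, g_k H$ whose pairwise Hausdorff distance is at most $D$ has $k \le N(D)$.

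First I would recall Sageev's construction. A codimension $1$ subgroup $H \le G$, together with a sufficiently large neighborhood parameter $r$, determines a wallspace: the walls are the $G$-translates of the partition of $G$ into the deep complementary components of $\nei{H}{r}$. Applying Sageev's dual construction to this wallspace produces a CAT$(0)$ cube complex $C$ on which $G$ acts. The key structural fact, which I would cite rather than reprove, is that $\dim C$ equals the supremum of the cardinalities of collections of pairwise crossing walls, where two walls coming from cosets $g_1 H$ and $g_2 H$ cross precisely when all four quarter spaces cut out by their halfspaces are deep.

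Next I would translate crossing into proximity of cosets. The content of the Hruska--Wise argument is that, once one fixes the neighborhood parameter $r$, a crossing of the walls associated to $g_1 H$ and $g_2 H$ forces the two cosets to fellow-travel: their intersection is coarsely unbounded, and in particular $g_1 H$ and $g_2 H$ lie within a uniform Hausdorff distance $D = D(r)$ of one another. Consequently a family of $n$ pairwise crossing walls yields $n$ distinct cosets that are pairwise within distance $D$, and bounded packing caps $n \le N(D)$. Therefore $\dim C \le N(D) < \infty$, which is the desired finiteness.

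The main obstacle is the middle step: making precise that crossing of the walls forces the corresponding cosets to be within a uniformly bounded Hausdorff distance, so that the packing bound applies. This is exactly where stability of $H$ enters, since the convex cocompactness characterization of Section \ref{sec:coco} guarantees that the walls are geometrically well behaved and that deep quarter spaces force the cosets to coarsely coincide on a bounded neighborhood. Beyond verifying that $H$ meets the hypotheses of \cite{hruska-wise} — which is precisely the bounded packing furnished by Theorem \ref{th:intro_main} — the remainder of the argument is a direct appeal to their established dimension bound.
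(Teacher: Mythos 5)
Your overall route is exactly the paper's own: the paper's entire proof of this corollary is the two-step citation you end with, namely that stability gives bounded packing (Theorem \ref{widthpack}, i.e.\ Theorem \ref{th:intro_main}(3)) and that Sageev's result, as packaged in Proposition \ref{fincub} following \cite{sageev1,sageev2,hruska-wise}, then yields finite dimensionality of the dual CAT$(0)$ cube complex with no further input. At that citation level your proposal is complete and correct.

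However, your middle paragraph, where you sketch the internals of the Hruska--Wise dimension bound, asserts two things that are false and would break the argument if you actually relied on them rather than on the black-boxed result. First, crossing of the walls dual to $g_1H$ and $g_2H$ does \emph{not} force the cosets to lie within uniformly bounded \emph{Hausdorff} distance, nor does it make their coarse intersection unbounded: take $G$ a closed hyperbolic surface group and walls dual to cyclic subgroups generated by two transversely intersecting simple closed curves; the walls cross, yet the corresponding cosets (axes) have infinite Hausdorff distance and the relevant conjugates intersect trivially. What crossing actually forces --- and what Hruska--Wise prove by a connectivity argument about $N_r(g_iH)$ and the deep complementary components, with no stability input whatsoever --- is that the \emph{minimum} distance between the cosets is bounded in terms of $r$. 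Relatedly, you have misquoted bounded packing: in the paper's definition (following \cite{hruska-wise}) closeness of cosets is measured by minimum distance between them, not Hausdorff distance, and it is precisely this min-distance version that Theorem \ref{widthpack} supplies and that the crossing lemma feeds into. Second, stability is not ``where the middle step enters'': crossing-implies-proximity holds for arbitrary finitely generated codimension-$1$ subgroups, and stability is used exactly once in the whole proof, to establish bounded packing. Since you ultimately defer to the established dimension bound of \cite{hruska-wise}, your conclusion stands, but as written your sketch substitutes a false mechanism (Hausdorff closeness plus unbounded coarse intersection, certified by stability) for the true one (min-distance proximity from connectivity, with stability entering only through packing).
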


Similarly, using work in
\cite{schwarz-inv}, \cite{mahan-relrig}, we prove the following:

\begin{prop} (See Proposition \ref{relrigprop}.)
	Let $G_1, G_2$ be finitely generated groups
	with Cayley graphs $\Gamma_1, \Gamma_2$, word metrics $d_1, d_2$,
	and stable subgroups $H_1, H_2$. Let $\Lambda_1$, $\Lambda_2$ be the limit sets of $H_1, H_2$ in 
	the Morse boundaries $\partial_M G_1, \partial_M G_2$ respectively. Let $\JJ_i$,
	$i=1,2$, be the collection  of translates of $H_{wi}$, the weak hulls
	of $\Lambda_i$ in $\Gamma_i$. Let
	$\phi: \JJ_1 \to \JJ_2$ be uniformly proper.
	Then there exists a quasi-isometry $q$ from $\Gamma_1$ to $\Gamma_2$ which pairs the sets $\JJ_1$ and $\JJ_2$ as $\phi$ does (in particular, $\Gamma_1, \Gamma_2$ are quasi-isometric).
\end{prop}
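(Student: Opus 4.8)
The plan is to realize this as an instance of the relative (pattern) rigidity machinery developed by Schwartz \cite{schwarz-inv} and Mj \cite{mahan-relrig}. In that framework a uniformly proper correspondence between two patterns of uniformly quasiconvex subsets is promoted to a quasi-isometry of the ambient spaces, provided the patterns satisfy suitable separation and intersection hypotheses. The new input here, which was unavailable in the classical quasiconvex setting, is precisely that stability forces these hypotheses: Theorem \ref{th:intro_main} supplies the finiteness (height, width, packing) that rigidifies the combinatorics of the pattern, and the coarse Helly property of Proposition \ref{cbc} makes the resulting extension coarsely well defined.

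First I would record the geometry of the weak hulls. Since $H_i$ is stable in $\Gamma_i$, its limit set $\Lambda_i$ in $\partial_M G_i$ is the Morse boundary of a uniformly quasiconvex subset, and the weak hull $H_{wi}$ lies at finite Hausdorff distance from $H_i$ itself (viewed as a subset of vertices of $\Gamma_i$). Consequently each translate $gH_{wi}$ is a uniformly Morse subset at bounded Hausdorff distance from the coset $gH_i$. Thus, up to uniformly bounded error, $\JJ_i$ is the coset collection of the stable subgroup $H_i$, and in particular it is a $G_i$-invariant family of uniformly quasiconvex sets.

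Next I would verify that $\JJ_i$ is a rigid pattern in the sense required by \cite{mahan-relrig}. Uniform quasiconvexity is immediate from the previous step. Mutual coboundedness, a uniform bound on $\diam\big(\pi_A(B)\big)$ for distinct $A,B \in \JJ_i$, follows from the Morse (contracting) property of stable sets together with the fact that distinct translates diverge, which is controlled by finite width and bounded packing. Finite height, finite width and bounded packing themselves are exactly the conclusions of Theorem \ref{th:intro_main} applied to $\HH = \{H_i\}$; for each $D$ they bound both the number and the depth of mutual $D$-overlaps among members of $\JJ_i$, so the coarse incidence structure (nerve) of the pattern is locally finite and finite dimensional. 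Because $\phi$ and its coarse inverse are uniformly proper, they carry deep coarse intersections to deep coarse intersections, and hence $\phi$ induces an isomorphism of these incidence structures.

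Finally I would invoke the extension theorem of \cite{mahan-relrig} to promote $\phi$ to a quasi-isometry $q$ of the ambient Cayley graphs pairing $\JJ_1$ with $\JJ_2$. Concretely, near the pattern one sends a point $x \in \Gamma_1$ to a coarse barycenter of the $\phi$-images of the hulls passing close to $x$: by bounded packing these are finite in number and pairwise close, so by Proposition \ref{cbc} such a barycenter exists and is determined up to bounded ambiguity. The symmetric construction from $\phi^{-1}$ furnishes a coarse inverse, and uniform properness supplies the two-sided metric estimates. The main obstacle is exactly this extension: defining $q$ consistently on points that are not close to any single hull, and checking it is coarsely well defined and a quasi-isometry in the large. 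This is where Proposition \ref{cbc} and the finiteness of Theorem \ref{th:intro_main} do the real work, the coarse barycenter removing the ambiguity in the pattern-driven part of the definition while bounded packing and finite width ensure that only a uniformly bounded amount of pattern data governs $q$ near each point, so that the local choices patch together into a global quasi-isometry.
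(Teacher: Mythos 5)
Your proposal takes essentially the same route as the paper: the authors likewise reduce Proposition \ref{relrigprop} to the pattern-rigidity machinery of \cite{mahan-relrig} (extending \cite{schwarz-inv}), stating that the axioms of \cite[Section 3.4]{mahan-relrig} are verified exactly as in that paper once the coarse barycenter -- the one genuinely new ingredient for stable subgroups -- is supplied by Proposition \ref{cbc}, whose proof in turn rests on the finiteness results of Theorem \ref{widthpack}. Your elaborations (weak hulls at bounded Hausdorff distance from cosets, uniform quasiconvexity of the pattern, and bounded packing controlling the local incidence data) are consistent fleshings-out of the same argument rather than a different approach.
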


\subsection*{Applications to bounded cohomology}
We study the case of hyperbolically embedded subgroups in more detail, with the goal of obtaining the applications to bounded cohomology explained below.

Hyperbolically embedded subgroups were introduced by Dahmani, Guirardel and Osin \cite{DGO} as a generalization of parabolic subgroups of relatively hyperbolic group. They are very useful to study acylindrically hyperbolic groups, and in fact they are essential, for example, in the proof of SQ-universality \cite{DGO}, as well as in the study of bounded cohomology \cite{HullOsin,FPS, HS}. In the specific case when $G$ is hyperbolic, a subgroup $H$ is hyperbolically embedded if and only if $H$ is quasiconvex and almost malnormal. This extra structure allows to obtain stronger structural properties for the intersections. 
For example, in Proposition \ref{thm:main}, we show that if $H_1$ and $H_2$ are hyperbolically embedded in $(G,X)$, with the Cayley graph $\ga(G,X)$ hyperbolic, then the collection of essentially distinct intersections of $H_1$ with $G$-conjugates of $H_2$, is hyperbolically embedded in $(G,X)$. 


Our main application of intersections of hyperbolically embedded subgroups is related to the bounded cohomology of acylindrically hyperbolic groups via quasimorphisms. We briefly recall this relation, see \cite{Fri} for details.

There are  natural maps from $H^{\bullet}_b(G, \mathbb{R})$, the {\bf bounded cohomology groups} of $G$ with trivial coefficients, to $H^{\bullet}(G, \mathbb{R})$, the standard cohomology groups of $G$. The kernel of theses maps is denoted by $EH^{\bullet}_b(G, \mathbb{R})$, and it is called the {\bf exact bounded cohomology} of $G$ with trivial coefficients. Thus, the problem of understanding the bounded cohomology of a group can be reduced to the problem of understanding the exact bounded cohomology.  In degree 2, there is an explicit description of $EH^{2}_b(G, \mathbb{R})$ in terms of quasimorphisms.

Recall that a {\bf quasimorphism} of $G$ is a function $f\colon G\to \mathbb{R}$ satisfying the property that there is some $D\geq 0$ such that $|f(xy)-f(x)-f(y)|\leq D$ for all $x,y\in G$. A quasimorphism is {\bf homogeneous} if $f(x^n)=n f(x)$ for all $n\in \mathbb{N}$ and all $x\in G$. Denote by $Q^h(G,\mathbb{R})$ the space of homogeneous quasimorphisms. One has that 
$$Q^h(G,\mathbb{R})/Hom(G,\mathbb{R})\cong EH_b^2(G,\mathbb{R}).$$

Brooks \cite{Brooks} develop a combinatorial construction that gives an infinite dimensional family of  quasimorphims for free groups. This construction can be viewed as extending a quasimorphism from an infinite cyclic subgroup to the whole group. There have been many generalizations of this construction, see e.g. \cite{EpsteinFuji, Fujiwara1,BeFu-wpd, FujiTAMS}, 
the most general of which being in the context of hyperbolically embedded subgroups (see \cite{HullOsin,FPS}).
In this paper, we improve this construction to extend quasimorphisms from different hyperbolically embedded subgroups simultaneously. 

Let $H_1,\dots,H_l$ be subgroups of a group $G$. Let $q_i$ be a homogeneous quasimorphism on $H_i$. We say that $\{q_i\}$ is {\bf intersection-compatible} if whenever $x\in H_i$ is conjugate to $y\in H_j$ we have $q_i(x)=q_j(y)$. It is very easy to see that if the $q_i$ have a common extension, then $\{q_i\}$ is intersection-compatible.

 \begin{theorem}(See Theorem \ref{compatible_extension}).
  Let $G$ be a group with generating set $X$ and assume that $\Gamma(G,X)$ is hyperbolic. Let $H_1,\dots,H_l$ be hyperbolically embedded in $(G,X)$, and let $q_i$ be a homogeneous quasimorphism on $H_i$.
  Then there exists a homogeneous quasimorphism $q$ on $G$ so that $q|_{H_i}=q_i$ if and only if $\{q_i\}$ is intersection-compatible.
 \end{theorem}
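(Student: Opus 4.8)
The backward implication is the one requiring work; the forward one is immediate, since a homogeneous quasimorphism on $G$ is invariant under $G$-conjugation, so a common extension $q$ forces $q_i(x)=q(x)=q(y)=q_j(y)$ whenever $x\in H_i$ is conjugate in $G$ to $y\in H_j$. For the converse, the plan is to build the extension by a Brooks-type counting construction adapted simultaneously to the whole family $\{H_i\}$, in the spirit of the single-subgroup extension theorems for hyperbolically embedded subgroups of Hull--Osin \cite{HullOsin} and Frigerio--Pozzetti--Sisto \cite{FPS}. Recall that for one almost malnormal hyperbolically embedded subgroup these techniques produce a homogeneous extension of any conjugation-invariant homogeneous quasimorphism; taking $i=j$ in the intersection-compatibility hypothesis gives exactly the conjugation-invariance needed to run this for each $H_i$ separately. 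The difficulty is to do all of the $H_i$ at once, and the two obstacles are (a) that the family $\{H_i\}$ need not be almost malnormal, so conjugates of distinct $H_i$ and $H_j$ may meet in infinite subgroups, and (b) that even on an almost malnormal family one must reconcile the contributions coming from the different $H_i$.

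The key structural input for obstacle (a) is Proposition \ref{thm:main}: the essentially distinct intersections $H_i\cap gH_jg^{-1}$ again form a hyperbolically embedded family, and this persists under iteration, so the entire ``intersection pattern'' of conjugates of the $H_i$ is geometrically controlled inside the hyperbolic space $\Gamma(G,X)$. The plan is to use this to organize the count. First I would observe that intersection-compatibility makes the data coherent: if $H_i\cap gH_jg^{-1}$ is infinite, then $q_i$ and the $g$-conjugate of $q_j$ agree on it, so there is a single well-defined homogeneous quasimorphism attached to each intersection piece, independent of the index through which one views it. This is precisely the compatibility one needs in order that a simultaneous count not contradict itself on the overlaps of different conjugates.

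With this in hand, I would define $q$ on $G$ by decomposing a group element along a geodesic of $\Gamma(G,X)$ into its components lying deep inside conjugates of the $H_i$, summing the associated $q_i$-values, and making the natural correction for the overlaps by an inclusion--exclusion over the intersection family supplied by Proposition \ref{thm:main} (the correction terms being well defined exactly because of the coherence noted above), and finally homogenizing. The verification then splits into two parts: that $q$ is a quasimorphism, which should follow from the hyperbolicity of $\Gamma(G,X)$ together with the hyperbolic embedding of both the $H_i$ and their intersections, bounding the defect as in \cite{HullOsin, FPS}; and that $q|_{H_i}=q_i$, which should hold because for $x\in H_i$ the leading term of the count is $q_i(x)$ and the overlap corrections cancel by intersection-compatibility. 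I expect the main obstacle to be the bounded control of the overlaps: ensuring that the simultaneous count has uniformly bounded defect and a well-defined value despite the possibly infinite intersections between conjugates of distinct $H_i$. This is exactly the point where Proposition \ref{thm:main} (guaranteeing that the intersections are themselves hyperbolically embedded, hence their contributions are geometrically tame) and intersection-compatibility (guaranteeing that these contributions are numerically consistent) must be used together, and the necessity of the latter is already visible from the easy direction.
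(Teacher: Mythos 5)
Your easy direction is correct, and you have correctly identified the two structural inputs (Proposition \ref{thm:main} and the coherence of the $q_i$ on intersections). But the heart of your argument --- the simultaneous Brooks-type count with inclusion--exclusion corrections over the intersection family --- is only a plan, and the step you yourself flag as ``the main obstacle'' (uniformly bounded defect and well-definedness of the corrected count) is precisely where the proof lives; you never resolve it. It is far from clear that inclusion--exclusion can be made to work: the trace/projection machinery of \cite{HullOsin,FPS} is designed for one hyperbolically embedded family at a time, and when cosets of distinct $H_i$, $H_j$ have infinite coarse intersection, the ``overlap terms'' of your count are supported on cosets of the intersection subgroups, where intersection-compatibility controls the \emph{values} but gives no control of the \emph{defect} of a signed sum of contributions read off along geodesics. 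No estimate in the literature covers this, and your sketch supplies none, so as written this is a genuine gap rather than a proof.

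The paper sidesteps the simultaneous count entirely by a different mechanism: induction on $l$. By the inductive hypothesis one extends $q_2,\dots,q_l$ to a homogeneous quasimorphism $q'$ on $G$, and sets $q_1'=q_1-q'|_{H_1}$. Intersection-compatibility enters exactly once, in combination with conjugation-invariance of $q'$: it forces $q_1'$ to \emph{vanish} on the finitely many representatives $K_1,\dots,K_n$ of the $H_1$-conjugacy classes of $H_1\cap H_j^g$, $j\geq 2$, furnished by Proposition \ref{thm:main}. After perturbing $q_1'$ to an alternating quasimorphism $q_1''$ vanishing on neighborhoods $B_C(1)K_iB_C(1)$, the \emph{single-subgroup} extension operator $\Psi$ of \cite{FPS} is applied, and the key geometric input is Lemma \ref{lem:proj_is_peripheral}: the projection of each $H_j$, $j\geq 2$, onto any coset $gH_1$ lies uniformly close to a coset of some $K_i$. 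Since $q_1''$ was arranged to vanish there, $\Psi(q_1'')|_{H_j}\equiv 0$ for $j\geq 2$, so $q=\Psi(q_1'')+q'$ restricts to within bounded distance of each $q_i$, and homogeneity upgrades this to equality. To repair your proposal you would either need to discover this subtraction-plus-induction device (which reduces everything to the known one-subgroup theorem plus one new projection lemma), or carry out genuinely new defect estimates for your simultaneous count; the former is what the paper does, and the missing idea in your write-up is exactly that reduction.
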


As an application of this, we show that not all the exact bounded cohomology of $G$ in degree 2 can be recovered from a given finite family of hyperbolically embedded subgroups (Corollary \ref{EHbnot_injective}), despite the fact that it can be recovered from the collection of all virtually free hyperbolically embedded subgroups by \cite{HS}.

\section{Preliminaries}
\subsection{Subgroup stability} \label{sec:stab}
We recall some material from \cite{DT15, cordes15}.
\begin{defn}
 Let $X$ be a geodesic metric space and $f \colon \R_{\geq 1}\times \R_{\geq 0} \rightarrow \R_{\geq 0}$ be
	a function.
	A quasigeodesic $\gamma$ in $X$ is  {\bf $f$--stable} if  for any $(K,\epsilon)$-quasigeodesic $\eta$ with endpoints on 
	$\gamma$,  we have $\eta \subset \ N_{f(K,\epsilon)}(\gamma)$, the $f(K,\epsilon)$-neighborhood of $\gamma$.
	
	 The function $f$ is called the {\bf stability function} of $\gamma$.
\end{defn}

\begin{defn} 
	Let $\Phi \colon X \rightarrow Y$ be a quasi-isometric embedding between geodesic metric spaces.  $\Phi(X)$ is called a {\bf stable subspace} of $Y$ and $\Phi$ is called a {\bf stable embedding} if there exists a stability function $f$ such that the image of any geodesic in $X$ is an $f$-stable quasigeodesic in $Y$.
	
	If $H\le G$ are finitely generated groups,  $H$ is called a {\bf stable subgroup} of $G$ if the inclusion map $i\colon H \to G$ is a stable embedding.
\end{defn}

Note that if $H$ is a stable subgroup of $G$, then the coset $gH$ is also stable in $G$ with the same stability function. We will need the following basic lemma, which follows from the definitions.

\begin{lemma} \label{lem:basic}
Suppose that $H$ is a stable subgroup of a finitely generated group $G$. For any $D\ge0$ there exists $R\ge0$ such that if $\gamma$ is a geodesic in $G$ whose endpoints have distance $D$ from $H$ then
\[
\gamma \subset N_R(H).
\] 
Moreover, such a geodesic is stable with stability function depending only on $D$ and that of $H$.
\end{lemma}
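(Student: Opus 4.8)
The plan is to compare the geodesic $\gamma$ with the image under the inclusion of an $H$-geodesic joining points of $H$ near the endpoints of $\gamma$, and to exploit the stability of the latter. Write $(K_0,\epsilon_0)$ for quasi-isometric embedding constants of $i\colon H\to G$ and $f$ for the stability function of $H$. Let $p,q$ be the endpoints of $\gamma$ and choose $h_1,h_2\in H$ with $d(p,h_1),d(q,h_2)\le D$. Let $\sigma$ be a geodesic in $H$ from $h_1$ to $h_2$; by the definition of a stable embedding its image $A:=i(\sigma)$ is an $f$-stable $(K_0,\epsilon_0)$-quasigeodesic in $G$ with endpoints $h_1,h_2$, and $A\subset N_{K_0+\epsilon_0}(H)$.

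First I would prove the containment $\gamma\subset N_R(H)$. Prepending a geodesic from $h_1$ to $p$ and appending a geodesic from $q$ to $h_2$, both of length at most $D$, turns $\gamma$ into a path $\eta$ from $h_1$ to $h_2$. A short computation (splitting into cases according to which of the three pieces two given points lie on, and using that the two end pieces have length at most $D$) shows that $\eta$ is a $(1,4D)$-quasigeodesic; the key point is that attaching uniformly bounded geodesic stubs to a geodesic cannot destroy the quasigeodesic inequalities by more than an additive constant. Since the endpoints $h_1,h_2$ of $\eta$ lie on $A$ and $A$ is $f$-stable, we get $\eta\subset N_{f(1,4D)}(A)$, hence $\gamma\subset N_{f(1,4D)}(A)\subset N_R(H)$ with $R:=f(1,4D)+K_0+\epsilon_0$.

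For the ``moreover'' statement I would show that $\gamma$ lies at bounded Hausdorff distance from the stable quasigeodesic $A$ and then invoke the principle that a path close to a stable quasigeodesic is itself stable. One direction, $\gamma\subset N_{f(1,4D)}(A)$, is exactly what was proved above. The reverse inclusion $A\subset N_\mu(\gamma)$ is the main obstacle, because stability as defined only controls quasigeodesics from the ``outside'' (they stay near $A$) and says nothing a priori about excursions of $A$ away from $\gamma$. To handle it I would step along $\eta$ in unit increments; each sample point lies within $f(1,4D)$ of some point of $A$, consecutive samples are boundedly close, and since $A$ is a $(K_0,\epsilon_0)$-quasigeodesic the corresponding parameters on $A$ move in bounded steps from near $0$ to near the length of $A$. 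A discrete intermediate value argument then shows these parameters are coarsely dense, so every point of $A$ is close to a sample point and hence to $\eta$; as $\eta\subset N_D(\gamma)$ this yields $A\subset N_\mu(\gamma)$ with $\mu$ depending only on $D$, $f$, $K_0,\epsilon_0$. (Alternatively one may cite that stable quasigeodesics are Morse and therefore Hausdorff-close to geodesics joining their endpoints.)

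Finally, with $\delta:=\max\{f(1,4D),\mu\}$ bounding $\dhau(\gamma,A)$, I would verify directly that $\gamma$ is stable: given a $(K,\epsilon)$-quasigeodesic $\beta$ with endpoints on $\gamma$, its endpoints lie within $\delta$ of $A$, so appending stubs of length at most $\delta$ produces a $(K,\epsilon+4\delta)$-quasigeodesic with endpoints on $A$; $f$-stability of $A$ places it in $N_{f(K,\epsilon+4\delta)}(A)$, whence $\beta\subset N_{f(K,\epsilon+4\delta)+\delta}(\gamma)$. Thus $\gamma$ is stable with stability function $(K,\epsilon)\mapsto f(K,\epsilon+4\delta)+\delta$, which depends only on $D$ and on the data of $H$, as required.
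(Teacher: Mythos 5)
Your proof is correct, and there is nothing in the paper to diverge from: the paper states Lemma \ref{lem:basic} without proof, introducing it with the remark that it ``follows from the definitions.'' Your proposal is exactly the natural elaboration of that remark, with all details in order --- the verification that attaching geodesic stubs of length at most $D$ (resp.\ $\delta$) to a geodesic (resp.\ a $(K,\epsilon)$-quasigeodesic) costs only an additive constant, the discrete intermediate-value argument (or, as you note, the Morse property) supplying the reverse Hausdorff containment $A\subset N_\mu(\gamma)$ that the definition of stability does not give for free, and the resulting stability function $(K,\epsilon)\mapsto f(K,\epsilon+4\delta)+\delta$ depending only on $D$, $f$, and the quasi-isometric embedding constants of $H$, as the statement requires.
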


We conclude this section by recalling a result of Cordes:

\begin{lemma}[{\cite[Lemma 2.2 and 2.3]{cordes15}}]\label{lem:cordes}
Suppose that $X$ is a geodesic metric space and let $\alpha$, $\beta$, and $\gamma$ be the sides of a geodesic triangle $\Delta$ in $X$. If $\alpha$ and $\beta$ are $f$--stable, then $\Delta$ is $\delta$--thin and $\gamma$ is $f'$--stable, where $\delta \ge0$ and $f'$
depend only on $f$.
\end{lemma}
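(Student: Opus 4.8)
The plan is to derive both conclusions from a single reformulation of stability, applied through one contradiction scheme run at a fixed scale. Label the vertices of $\Delta$ so that $\alpha=[p,q]$ and $\beta=[p,r]$ are the two stable sides and $\gamma=[q,r]$ is the remaining side, and set $t_0=(q\mid r)_p$. The tool I will use is the geodesic version of Lemma \ref{lem:basic}: since $\alpha$ is $f$--stable, a geodesic arc whose two endpoints lie within a fixed distance $\delta_0$ of $\alpha$ becomes, after attaching the two length--$\le\delta_0$ segments to its feet on $\alpha$, a $(1,4\delta_0)$--quasigeodesic with endpoints on $\alpha$, hence lies in $N_{f(1,4\delta_0)}(\alpha)$; the same holds for $\beta$. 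The crucial point is that the neighbourhood radius $f(1,4\delta_0)$ depends only on $f$ and the fixed constant $\delta_0$, not on the length of the arc, and it is this scale--independence that makes a contradiction argument possible.

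The heart of the proof is the thinness statement, and I would first record an elementary reduction. For $x\in\gamma$, writing $\rho=[p,x]\ast\gamma|_{[x,r]}$, the triangle inequality gives $(u\mid v)_x\le(p\mid r)_x$ for every $u\in[p,x]$, $v\in[x,r]$, so if $(p\mid r)_x\le C$ then $\rho$ is a $(1,2C)$--quasigeodesic with endpoints $p,r$ on $\beta$, forcing $d(x,\beta)\le f(1,2C)$; symmetrically, $(p\mid q)_x\le C$ forces $d(x,\alpha)$ bounded. Thus proving $\gamma\subset N_\delta(\alpha\cup\beta)$ amounts to the no--bulge estimate that for each $x\in\gamma$ at least one of $(p\mid q)_x$, $(p\mid r)_x$ is bounded. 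I would prove this by contradiction: if some $x\in\gamma$ has $d(x,\alpha\cup\beta)=R$ with $R$ huge, let $y_q,y_r\in\gamma$ be the last points on either side of $x$ at which the distance to $\alpha\cup\beta$ drops to the fixed value $\delta_0$ (these exist because $q\in\alpha$ and $r\in\beta$), with feet $a_q,a_r$. Since $R\gg\delta_0$ the arc $\gamma|_{[y_q,y_r]}$ is long, so $[a_q,y_q]\ast\gamma|_{[y_q,y_r]}\ast[y_r,a_r]$ is a $(1,O(\delta_0))$--quasigeodesic; if $a_q$ and $a_r$ lie on the same stable side, the tool traps it, and hence $x$, in a neighbourhood of radius $M(f,\delta_0)$, contradicting $R>M$.

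The main obstacle is exactly the remaining configuration, where $x$ is far from $\alpha$ and from $\beta$ simultaneously, so the two feet $a_q,a_r$ land on different stable sides and no single stable geodesic controls the quasigeodesic above. My plan to dissolve this case is to manufacture a single $f'$--stable reference path spanning $q$ and $r$. Concretely, I would establish the fellow--travelling estimate that $\alpha|_{[0,t_0]}$ and $\beta|_{[0,t_0]}$ stay within $C(f)$ of one another, again by a fixed--scale contradiction of the same type; this bounds $d(\alpha(t_0),\beta(t_0))$, so the outer concatenation $\sigma=\alpha|_{[t_0,\cdot]}^{-1}\ast[\,\alpha(t_0),\beta(t_0)\,]\ast\beta|_{[t_0,\cdot]}$ has length at most $d(q,r)+C$ and is therefore automatically a $(1,C)$--quasigeodesic. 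Being a concatenation of two $f$--stable geodesic arcs meeting with bounded defect, $\sigma$ is $f'$--stable with $f'$ depending only on $f$, and then it serves as the single stable reference: rerunning the exit argument against $\sigma$ rather than against $\alpha\cup\beta$ removes the mixed--feet difficulty. I expect this fellow--travelling/no--bulge step to be where essentially all the genuine difficulty lives, since it is precisely here that stability, and not mere geodesicity, must be invoked; in a proper space one could alternatively pass to a pointed limit of rescaled triangles, using that the Morse gauge survives in the limit.

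Granting $\delta$--thinness, the stability of $\gamma$ is comparatively routine. Stability of $\sigma$ gives $\gamma\subset N_C(\sigma)$, while thinness applied to the sides $\alpha$ and $\beta$ gives $\sigma\subset N_{C}(\gamma)$, so $d_{\mathrm{Haus}}(\gamma,\sigma)$ is bounded in terms of $f$. Since $\gamma$ then lies within bounded Hausdorff distance of the $f'$--stable path $\sigma$, and stability is preserved under bounded perturbation with a controlled change of gauge, $\gamma$ is $f''$--stable with $f''$ depending only on $f$. Finally, the $\delta$--thinness of $\Delta$ is exactly what the argument above produces for all three sides. The one trick I want to emphasize in the write--up is exiting at the fixed scale $\delta_0$ rather than at scale $R$: this is what keeps the quasigeodesic constants, and hence the neighbourhood radius supplied by stability, independent of $R$, and it is the mechanism that converts "far from the stable sides" into a contradiction.
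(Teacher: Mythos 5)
You should first note that the paper does not prove this lemma at all: it is imported verbatim from Cordes \cite[Lemmas 2.2 and 2.3]{cordes15}, so your attempt can only be measured against the standard argument there. Much of your skeleton is sound: the monotonicity $(u\mid v)_x\le(p\mid r)_x$ along the two geodesics is correct, so the reduction of thinness to the no-bulge estimate via the $(1,2C)$--quasigeodesic $\rho=[p,x]\ast\gamma|_{[x,r]}$ with endpoints on $\beta$ is valid, and your fixed-scale exit argument genuinely settles the case where both feet $a_q,a_r$ land on the same stable side. The gap is exactly where you park the difficulty: the fellow-travelling claim that $\alpha|_{[0,t_0]}$ and $\beta|_{[0,t_0]}$ stay $C(f)$--close, which you assert follows ``by a fixed-scale contradiction of the same type.'' It does not. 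Your exit mechanism requires the offending arc to return to the fixed scale $\delta_0$ near a \emph{single} stable side at \emph{both} ends; here, if $d(\alpha(s),\beta)$ exceeds $\delta_0$ at some $s<t_0$, there is an exit point on the $p$-side but the function need never return to scale $\delta_0$ before the parameter reaches $q$, whose distance to $\beta$ is uncontrolled --- so no quasigeodesic with both endpoints on one stable side is available, and the Morse property has nothing to bite on. The same circularity infects the assertion that $\sigma$ is $f'$--stable because it is ``a concatenation of two $f$--stable geodesic arcs meeting with bounded defect'': a quasigeodesic with one endpoint on the $\alpha$-part and one on the $\beta$-part of $\sigma$ is precisely the mixed configuration you set out to resolve, so that step assumes the conclusion in disguise.

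The missing idea is the nearest-point projection trick. Let $x^*$ be a nearest point to $p$ on $\gamma$. For $u\in\gamma|_{[q,x^*]}$ at arclength $a$ from $x^*$ and $v\in[x^*,p]$ at distance $b$ from $x^*$, one has $d(u,v)\ge d(p,u)-d(p,v)\ge b$ and $d(u,v)\ge a-b$, whence $d(u,v)\ge\max(b,a-b)\ge\frac{1}{3}(a+b)$; thus $\gamma|_{[q,x^*]}\ast[x^*,p]$ is a $(3,0)$--quasigeodesic with both endpoints on $\alpha$, and symmetrically $\gamma|_{[r,x^*]}\ast[x^*,p]$ has both endpoints on $\beta$. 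Stability of $\alpha$ and $\beta$ traps both paths, which gives thinness in one stroke with explicit constant $f(3,0)$ (this is essentially Cordes' proof, visible in his thinness constant), and simultaneously places the spine $[x^*,p]$ --- whose length is at least $t_0=(q\mid r)_p$ --- in a uniform neighbourhood of \emph{both} $\alpha$ and $\beta$ at matching parameters, which is exactly your unproven fellow-travelling estimate. With that inserted, your bridge path $\sigma$ and the closing Hausdorff-distance argument for the stability of $\gamma$ can be pushed through, although the stability of $\sigma$ still requires an argument (e.g.\ that a quasigeodesic with mixed endpoints penetrating toward $p$ must hug $\alpha$ and $\beta$ at matching parameters, violating the quasigeodesic lower bound by fellow-travelling) rather than being automatic. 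As written, however, the proposal relocates all the genuine content of the lemma into two asserted steps that its own toolkit cannot prove.
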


\subsection{Boundary convex cocompactness} \label{sec:coco}
The {\bf Morse boundary} $\partial_M X$
 of a geodesic metric space $X$ was defined in \cite{cordes15, cordeshume16}.
 We shall not need
the exact definition. Suffice to say that one can think of it 
as consisting of asymptote classes of sequences of points which can be connected to a fixed basepoint $o \in X$ by stable geodesics.  
Let $H$ be a finitely generated group acting by isometries on a proper geodesic metric space $X$.   The {\bf limit set} $\Lambda_H \subset \partial_M X$ is the set of points which can be represented by sequences in $H \cdot o$. 
It is proved in \cite{cordesdurham16} that $\Lambda_H$ is independent of the base-point $o$.
The {\bf weak hull} $H_w(H)$ of $\Lambda_H$ in $X$ is the union of  all geodesics with distinct endpoints in $\Lambda_H$. 

\begin{defn}\cite{cordesdurham16}
 $H$ acts boundary convex cocompactly on $X$ if
	\begin{enumerate}
		\item $H$ acts properly on $X$;
		\item For some (any) $o \in X$, $\Lambda_H$ is nonempty and compact; 
		\item For some (any) $o \in X$, the action of $H$ on $H_w(H)$ is cocompact.
	\end{enumerate} 

\noindent Let $G$ be a finitely generated group and $H \subset G$ a subgroup.  $H$ is {\bf boundary convex cocompact}
in $G$ if $H$ acts boundary convex cocompactly on any Cayley graph of $G$ with respect to a finite generating set. 
\end{defn}

The following theorem was proven by Cordes and Durham:
\begin{theorem}\cite{cordesdurham16}\label{coco=stable}
	Let $G$ be a finitely generated group and $H$ a subgroup. Then $H$ is stable in $G$ if and only if it is boundary convex cocompact. 
	
	In this case,  $H$ is hyperbolic and the inclusion map $i: H \to G$  extends continuously and $H$-equivariantly to an embedding of the Gromov boundary $\partial H$ into $\partial_M G$ which is a homeomorphism onto its limit set
	 $\Lambda_H$.
\end{theorem}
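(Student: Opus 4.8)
The plan is to fix a Cayley graph $X = \mathrm{Cay}(G)$ with basepoint $o$ the identity vertex and to prove the two implications separately, constructing the boundary map needed for the final assertion along the way. Throughout I may assume $H$ is infinite, the finite case being degenerate (empty limit set).

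Suppose first that $H$ is stable in $G$; I want boundary convex cocompactness. Properness of the $H$-action on $X$ is immediate, since $G$ already acts properly on its own Cayley graph. To see that $H$ is hyperbolic, take a geodesic triangle in a Cayley graph of $H$ and push it forward by the inclusion $i$: its sides become $f$-stable quasigeodesics, which I replace by geodesics of $X$ that are $f'$-stable with $f'$ depending only on $f$; Lemma \ref{lem:cordes} then makes the image triangle uniformly thin, and pulling this back through the quasi-isometric embedding $i$ shows that triangles in $H$ are uniformly thin, so $H$ is hyperbolic. Next I build the map $\partial H \to \partial_M X$: a point of $\partial H$ is represented by a geodesic ray in $H$, whose image under $i$ is an $f$-stable quasigeodesic ray and hence names a point of $\partial_M X$ lying in $\Lambda_H$ by definition. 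Stability makes this well defined and injective (asymptotic rays map to asymptotic rays, and rays with distinct endpoints stay Morse-separated), and one checks it is continuous with continuous inverse onto its image; as $\partial H$ is compact, $\Lambda_H$ is that image and is therefore nonempty and compact. The map is $H$-equivariant by naturality of the construction, which is exactly the final assertion of the theorem.

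It remains, for this direction, to show that $H$ acts cocompactly on the weak hull $H_w(H)$. Applying Lemma \ref{lem:basic} to the stable set $i(H)$, any geodesic of $X$ whose endpoints lie within bounded distance of $i(H)$ is contained in a bounded neighborhood of $i(H)$; a limiting argument, using that points of $\Lambda_H$ are approximated by points of $i(H) = H\cdot o$, upgrades this to geodesics with both ideal endpoints in $\Lambda_H$. Hence $H_w(H)$ lies in a bounded neighborhood of $i(H)$, while $i(H) \subset H_w(H)$ up to bounded error; so $H_w(H)$ is at finite Hausdorff distance from the orbit $i(H)$, on which $H$ acts cocompactly, giving cocompactness on $H_w(H)$.

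For the converse, suppose $H$ acts boundary convex cocompactly. Since the action on $H_w(H)$ (with its induced length metric) is proper and cocompact, the Milnor--Svarc lemma gives that the orbit map $H \to H_w(H)$ is a quasi-isometry, so the inclusion $i \colon H \to X$ is a quasi-isometric embedding; it then suffices to show that $H_w(H)$ is a stable subset of $X$ with a single stability function, for then $i$ is a stable embedding and $H$ is stable. The essential point is to extract a uniform stability gauge, and this is where compactness of $\Lambda_H$ enters: the Morse boundary $\partial_M X$ is exhausted by the compacta of rays sharing a fixed stability function, so the compact set $\Lambda_H$ is contained in a single such stratum, i.e. all geodesic rays representing points of $\Lambda_H$ are $n$-stable for one $n$. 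A geodesic of $X$ joining two points of $\Lambda_H$ is then uniformly stable by a thin-triangle argument as in Lemma \ref{lem:cordes}, so $H_w(H)$ is stable with one stability function, as required. I expect this uniformity step to be the main obstacle: one must argue carefully that a compact subset of the Morse boundary, which carries a direct-limit topology, lands in a single stratum, since it is precisely this that converts the pointwise Morse property of hull geodesics into the single stability function demanded by the definition of a stable embedding.
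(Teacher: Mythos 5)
First, note that the paper does not actually prove Theorem \ref{coco=stable}: it is imported verbatim from \cite{cordesdurham16}, so your attempt can only be compared with that source, whose overall strategy you have essentially reconstructed. Your forward direction (stable $\Rightarrow$ boundary convex cocompact) is sound in outline: properness is free, hyperbolicity of $H$ via Lemma \ref{lem:cordes}, the boundary map from stable quasigeodesic rays, and containment of hull geodesics in a neighborhood of $i(H)$ via Lemma \ref{lem:basic} plus a limiting argument all match the cited proof, modulo the glossed point that \emph{every} point of $\Lambda_H$ (defined via sequences in $H\cdot o$) is actually hit by a ray from $H$, which needs an Arzel\`a--Ascoli argument. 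You have also correctly identified the crux of the converse: promoting the pointwise Morse property of hull geodesics to a single stability gauge using compactness of $\Lambda_H$.

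However, the converse direction as written has two genuine gaps. The Milnor--\v{S}varc step is both out of order and false as stated: $H_w(H)$ is a union of bi-infinite geodesics which may be pairwise disjoint, so with its induced length metric it need not be connected, let alone a geodesic space whose metric is comparable to the subspace metric from $X$; and a proper cocompact action on a mere subset of $X$ does not yield that $i\colon H\to X$ is a quasi-isometric embedding. The correct order, as in \cite{cordesdurham16}, is to establish the uniform Morse gauge \emph{first}, deduce from thin ideal polygons that the hull is quasiconvex in $X$, and only then run the standard chaining argument (as for quasiconvex subgroups of hyperbolic groups) to get the quasi-isometric embedding --- so your "it then suffices" inverts the logical dependence. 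Second, your claim that a compact subset of $\partial_M X$ lies in a single stratum is not a formal consequence of the direct-limit topology: the strata $\partial_M^N X$ are closed rather than open and the index set of gauges is uncountable, so compactness of $\Lambda_H$ gives nothing directly; no "exhaustion by compacta" argument applies off the shelf. This uniformization statement is precisely the main technical lemma of \cite{cordesdurham16}, proved there by a contradiction argument using Arzel\`a--Ascoli and properness of $X$. You correctly flag it as the main obstacle but leave it unproven, so the direction boundary convex cocompact $\Rightarrow$ stable is incomplete as it stands.
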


\subsection{Height, Width and Bounded Packing}
	For a subgroup $H \le G$, we write $H^g = gHg^{-1}$.
\begin{defn}\cite{GMRS}
	Let $G$ be a group and $H$ a subgroup.
	\begin{itemize}
 	\item Conjugates $H^{g_1}, \cdots, H^{g_k}$ are {\bf essentially distinct}
	if the cosets $g_1H, \cdots, g_kH$ are distinct.
	
	\item $H$ has height at most $n$ in $G$ if the intersection of any $(n+1)$ essentially distinct conjugates is finite. The least $n$ for which this is satisfied is called the {\bf height} of $H$ in $G$.
	
	\item The {\bf width} of $H$  is the maximal cardinality of the set $\{g_iH : |H^{g_i} \cap H^{g_j}| = \infty \}$, where $\{g_iH\}$ ranges over all collections of distinct cosets.
	\end{itemize}
Similarly,  given  a finite collection $\HH =\{H_1, \cdots, H_l\}$, 
conjugates $H_{\sigma(1)}^{g_1}, \cdots, H_{\sigma(k)}^{g_k}$ are {\bf essentially distinct}
 if the cosets $g_1H_{\sigma(1)}, \cdots, g_kH_{\sigma(k)}$ are distinct. 
 The finite collection $\HH$ of subgroups of $G$ has height at most $n$ if the intersection of any $(n+1)$ essentially distinct conjugates is finite.
	 The width of $H$  is the maximal cardinality of the set $\{g_{\sigma(i)}H : |H_{\sigma(i)}^{g_i} \cap H_{\sigma(j)}^{g_j}| = \infty \}$, where $\{g_{\sigma(i)}H\}$ ranges over all collections of distinct cosets.
\end{defn}
Note that height (resp. width) of a finite family $\HH$ is zero if and only if all the subgroups of $\HH$ are finite.

A geometric analog of height was defined by Hruska and Wise.

\begin{defn}\cite{hruska-wise}
	Let $G$ be a  finitely generated group and $\Gamma$ a Cayley graph with respect to a 
finite generating set. A subgroup $H$ has {\bf bounded packing} in $G$  if, for all $D \geq 0$,
there exist  $N\in \natls$ such that for any collection of $N$ distinct cosets
$gH$ in $G$, at least two are separated by a distance of at least   $D$.

Similarly,    a finite collection $\HH =\{H_1, \cdots, H_l\}$ of 
 subgroups of $G$ has {\bf bounded packing}  if, for all $D \geq 0$,
 there exist  $N\in \natls$ such that for any collection of $N$ distinct cosets
 $gH_i: H_i \in \HH$, at least two are separated by a distance of at least   $D$.
\end{defn}

Equivalently, $\HH$ has bounded packing in $G$ if for each $D$ there exists $N = N(D)$ bounding the cardinality of collections $\{g_1H_{\sigma(1)},\cdots, g_rH_{\sigma(r)}\}$ of pairwise $D-$close distinct left cosets
of elements of $\HH$. Here, closeness is with respect to minimum distance between cosets.

\subsection{Hyperbolically embedded subgroups}
Hyperbolically embedded subgroups generalize the concept of parabolic subgroups of relatively hyperbolic groups (Lemma \ref{lem:relhyp}) and every hyperbolic hyperbolically embedded subgroup is stable (Remark \ref{HEareStable}).
%
%
%
%

Recall also that, by a theorem of Osin \cite{Osin}, a group is {\bf acylindrically hyperbolic} if it contains a non-degenerate hyperbolically embedded subgroup.

We do not give the general definition of hyperbolically embedded subgroup, and we refer the reader to \cite[Def. 4.25]{DGO}. Instead, we restrict to the case of hyperbolically embedded subgroup which are hyperbolic, the one of interest in this paper, and give the characterization that will be most useful for our purposes. The following was proved in \cite[Theorem 3.9]{AMS} and
independently proved by Hull in \cite[Thm. 4.13]{Hull}.

\begin{theorem}\label{thm:addQ}
Suppose that $G$ is a group and $X$ is a possibly infinite generating set so that $\cay{X}{G}$ is hyperbolic.

A family $\{Q_i\}_{i=1}^n$  of subgroups of $G$ is hyperbolically embedded in $(G,X)$, denoted $\{Q_i\}_{i=1}^n\h (G,X)$, if and only if the following hold: 
\begin{enumerate}

\item[{\rm (Q1)}] ({\it $\{Q_i,\}_{i=1}^n$ is geometrically separated}) For every $\e>0$ there exists $R=R(\e)$ such that for $g\in G$ if $$\diam (Q_i \cap \nei{gQ_j}{\e}) \ge R$$
then $i=j$ and $g\in  Q_i$ (here the distances are measured with respect to the graph metric on $\cay{X}{G}$).

\item[{\rm (Q2)}] ({\it Finite generation}) For each $i$,  there exists a finite subset $Y_i\subset G$   generating  $Q_i$.
\item[{\rm (Q3)}] ({\it Quasi-isometrically embedded}) There exist $\mu\geq 1$ and $c\geq 0$ such that for any $i \in \{1,\dots,n\}$ and all $h\in Q_i$ one has $|h|_{Y_i}\leq \mu |h|_{X}+c$.
\end{enumerate}
  \end{theorem}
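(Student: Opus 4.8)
The plan is to unwind the definition of hyperbolic embeddability from \cite{DGO}: $\{Q_i\}_{i=1}^n\h(G,X)$ means that the relative Cayley graph $\cay{X\sqcup\cH}{G}$, with $\cH=\bigsqcup_i(Q_i\setminus\{1\})$, is hyperbolic and that each $Q_i$, equipped with its relative metric $\hat d_i$, is a proper metric space (every $\hat d_i$-ball is finite); here $\hat d_i(g,h)$ is the length of a shortest path in $\cay{X\sqcup\cH}{G}$ from $g$ to $h$ using no edge of the $Q_i$-clique based at the identity. The single most useful observation is that $\cay{X\sqcup\cH}{G}$ is precisely the graph obtained from the hyperbolic graph $\cay{X}{G}$ by coning off the family of left cosets $\{gQ_i : g\in G,\ 1\le i\le n\}$, turning each coset into a clique of diameter $1$. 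I would therefore reformulate the whole statement as an assertion about coning off a $G$-invariant family of subsets of a hyperbolic space, and the two external tools I would invoke are: (i) coning off a \emph{uniformly quasiconvex} family of subsets of a hyperbolic space again yields a hyperbolic space (Kapovich--Rafi, Bowditch; see also the appendix of \cite{DGO}); and (ii) the bounded projection lemma, namely that if two quasiconvex subsets of a $\delta$-hyperbolic space coarsely intersect in a set of bounded diameter, then the nearest-point projection of one onto the other has bounded diameter.

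For the implication (Q1)$+$(Q2)$+$(Q3)$\Rightarrow\{Q_i\}\h(G,X)$, I would argue as follows. By (Q2) and (Q3) each $Q_i$ is finitely generated and quasi-isometrically embedded, hence quasiconvex, in the hyperbolic space $\cay{X}{G}$; translating by $G$ shows that $\{gQ_i\}$ is a uniformly quasiconvex family, so tool (i) gives that the cone-off $\cay{X\sqcup\cH}{G}$ is hyperbolic, verifying the first half of the definition. For properness of $\hat d_i$ I would take a relative geodesic realizing $\hat d_i(1,h)\le N$ with $h\in Q_i$, lift each of its $\cH$-edges to an $X$-geodesic inside the corresponding coset $C\neq Q_i$, and project the resulting $\cay{X}{G}$-path to $Q_i$ via $\pi$. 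Geometric separation (Q1) says that $\diam\!\big(Q_i\cap\nei{C}{\e}\big)$ is bounded for every $C\neq Q_i$, so by tool (ii) each of the at most $N$ lifted coset-segments, as well as each single $X$-edge, projects to a set of diameter at most a constant $D_0$; since consecutive segments share endpoints, the projections overlap and $d_X(1,h)=d_X(\pi(1),\pi(h))\le ND_0$. By (Q3) this gives $|h|_{Y_i}=O(N)$, and since $Y_i$ is finite by (Q2), only finitely many such $h$ exist, which is exactly properness.

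For the converse $\{Q_i\}\h(G,X)\Rightarrow$ (Q1)$+$(Q2)$+$(Q3), I now have both $\cay{X}{G}$ and its cone-off $\cay{X\sqcup\cH}{G}$ hyperbolic, together with properness of the $\hat d_i$. To obtain (Q2) and (Q3) I would fix a large finite relative ball $Y_i=\{h\in Q_i:\hat d_i(1,h)\le C\}$ (finite by properness) and show, using hyperbolicity of the cone-off to cut relative geodesics into bounded-length pieces lying in the $Y_i$-clique, that $Y_i$ generates $Q_i$ with $|h|_{Y_i}\asymp\hat d_i(1,h)$; combined with the trivial inequality $\hat d_i(1,h)\le|h|_X$ this yields both the finite generation (Q2) and the quasi-isometric embedding (Q3). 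For geometric separation (Q1) I would argue contrapositively: if some $Q_i$ and $C\neq Q_i$ coarsely intersected in arbitrarily large diameter, a long fellow-travelling segment would let one move far along $Q_i$ while using only the single coset $C$ together with bounded $X$-detours, producing elements of $Q_i$ that are far apart in $\cay{X}{G}$ yet have bounded $\hat d_i$-distance, contradicting properness; this is the converse of the projection estimate used above.

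The main obstacle, and the genuinely technical rather than formal part, is the equivalence between geometric separation (Q1) and properness of the relative metric. Both directions rest on the bounded projection lemma and its quantitative converse in $\cay{X}{G}$, and one must check that all projection constants depend only on $\delta$ and on the quasiconvexity and separation functions, uniformly over the finitely many $Q_i$ and over all cosets. A secondary subtlety is extracting finite generation and undistortion (Q2)$+$(Q3) from the bare definition in the forward direction, where one has to verify that a single finite relative ball already generates $Q_i$ with comparable word length; this is precisely the point at which hyperbolicity of the cone-off $\cay{X\sqcup\cH}{G}$, and not merely of $\cay{X}{G}$, is indispensable.
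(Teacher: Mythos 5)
You should first know that the paper itself contains no proof of Theorem~\ref{thm:addQ}: it is imported verbatim from \cite[Theorem 3.9]{AMS} and \cite[Theorem 4.13]{Hull}, so your attempt has to be measured against those sources. Your forward implication, (Q1)$+$(Q2)$+$(Q3) $\Rightarrow$ hyperbolically embedded, is essentially the argument used there and is sound: (Q2)$+$(Q3) make the cosets $gQ_j$ a uniformly quasiconvex family in the hyperbolic graph $\Gamma(G,X)$ (Morse lemma, no properness needed), so the coned-off graph, which is exactly $\Gamma(G,X\sqcup\mathcal{H})$, is hyperbolic by the Kapovich--Rafi/Bowditch coning lemma; and local finiteness of the relative metric $\widehat{d}_i$ follows by chaining bounded projections, using correctly that an admissible path may use clique edges only in cosets distinct from $Q_i$ itself, which is precisely what (Q1) controls. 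Your derivation of (Q1) from local finiteness in the converse is also correct, and is even more elementary than your ``converse projection estimate'' framing suggests: for $a,b\in Q_i\cap N_{\varepsilon}(C)$ with $C\neq Q_i$, the path through one clique edge of $C$ and two $X$-segments of length at most $\varepsilon$ is admissible, so $\widehat{d}_i(1,a^{-1}b)\leq 2\varepsilon+1$, and finiteness of that $\widehat{d}_i$-ball bounds $\mathrm{diam}\bigl(Q_i\cap N_{\varepsilon}(C)\bigr)$; no hyperbolicity enters at all.

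The genuine gap is your derivation of (Q2)$+$(Q3) from hyperbolic embeddedness. The pivotal claim $|h|_{Y_i}\asymp\widehat{d}_i(1,h)$ is true only a posteriori: since $\widehat{d}_i\leq d_X$ on $Q_i$, and since (once the theorem is known) $|h|_{Y_i}\asymp|h|_X$ there, your claim is equivalent to the lower bound $d_X(1,h)\preceq\widehat{d}_i(1,h)$, which is the heart of the matter. Your proposed mechanism --- cutting a path realizing $\widehat{d}_i(1,h)$ ``into bounded-length pieces lying in the $Y_i$-clique'' --- does not parse: an admissible path travels through arbitrary elements of $G$ and has no a priori reason to return near $Q_i$; forcing it to, via projections or fellow-travelling, requires quasiconvexity of $Q_i$ in $\Gamma(G,X)$, which for a subgroup is equivalent to the quasi-isometric embedding (Q3) you are in the middle of proving, so the step is circular as sketched. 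Relatedly, your closing diagnosis is backwards: hyperbolicity of the cone-off cannot be the indispensable ingredient for (Q2)$+$(Q3), because it holds for \emph{every} hyperbolically embedded family by definition, including families with infinitely generated members --- e.g., an infinitely generated peripheral subgroup of a relatively hyperbolic group, hyperbolically embedded over a finite relative generating set as in Lemma~\ref{lem:relhyp}, where (Q2) fails outright. What forces (Q2)$+$(Q3) is that $X$ generates all of $G$ and $\Gamma(G,X)$ is hyperbolic, and the missing content is precisely the comparison between $d_X|_{Q_i}$ and $\widehat{d}_i$; this is where the cited proofs lean on the component/polygon machinery of \cite{DGO} (or, in the spirit of this paper's Lemma~\ref{proj_are_same}, on the fact from \cite{KapovichRafi} that $X$-geodesics are unparametrized quasigeodesics of the cone-off) rather than on a formal cutting argument. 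Supply that comparison and the rest of your outline goes through.
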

  
  We write $\{Q_i\}_{i=1}^n\h G$ if there is some set $X$ such that $\{Q_i\}_{i=1}^n\h (G,X)$.
  

Hyperbolically embedded subgroups generalize peripheral subgroups of relatively hyperbolic groups in the following sense:

\begin{lemma}\cite[Proposition 4.28]{DGO}\label{lem:relhyp}
A group $G$ is relatively hyperbolic with respect to $\Hl$ if and only if for any finite relative generating set $X$, $\Hl\h (G,X)$.
\end{lemma}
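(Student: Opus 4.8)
The plan is to prove the equivalence by comparing, term by term, Osin's definition of relative hyperbolicity with the characterization of hyperbolic embeddedness in terms of the relative Cayley graph. Fix a finite relative generating set $X$ and form the graph $\G$, whose edge set includes, for each $\lambda$ and each coset $gH_\lambda$, a complete graph on the vertices of $gH_\lambda$. Both notions share the requirement that $\G$ be Gromov hyperbolic; what must be matched up is the additional finiteness data. On the relative-hyperbolicity side this is finite relative presentation together with a linear relative isoperimetric inequality, equivalently (by Osin and Bowditch) the bounded coset penetration property and fineness of $\G$. On the hyperbolic-embeddedness side it is properness of each relative metric $\hat d_\lambda$ on $H_\lambda$, where $\hat d_\lambda$ measures shortest paths in $\G$ between elements of $H_\lambda$ that avoid the edges internal to $H_\lambda$, and properness means $\{h \in H_\lambda : \hat d_\lambda(1,h) \le n\}$ is finite for every $n$. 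So the heart of the argument is to show that, once $\G$ is hyperbolic, these two finiteness conditions are equivalent.

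For the forward direction I would assume $G$ is hyperbolic relative to $\Hl$ and invoke Osin's structure theory to get that $\G$ is hyperbolic and that geodesics in $\G$ satisfy bounded coset penetration. From this I would deduce properness of $\hat d_\lambda$: an element $h \in H_\lambda$ with $\hat d_\lambda(1,h) \le n$ is joined to $1$ by a path of length at most $n$ in $\G$ that avoids the internal edges of $H_\lambda$, and bounded coset penetration limits the number of such $h$ for each $n$, yielding the required local finiteness.

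For the converse I would assume $\Hl \h (G,X)$ with $X$ finite, so that $\G$ is hyperbolic and each $\hat d_\lambda$ is proper, and I would recover Osin's conditions. The key step is to upgrade properness of the relative metrics to fineness of $\G$, i.e.\ that each edge lies in only finitely many embedded loops of any fixed length. I would take such a loop, cut it at the points where it enters and leaves the cosets $gH_\lambda$, and use hyperbolicity of $\G$ together with properness of $\hat d_\lambda$ to bound the number of combinatorial types of the resulting component subpaths. Fineness plus hyperbolicity then gives relative hyperbolicity.

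The hard part will be precisely this translation between the properness of the intrinsic relative metrics $\hat d_\lambda$ and the global fineness / bounded coset penetration property of $\G$. Both directions require controlling how geodesics and loops interact with the cosets $gH_\lambda$, and the essential tool is a path-surgery argument that replaces an excursion into a coset by a controlled detour, with hyperbolicity of $\G$ preventing uncontrolled backtracking. Since this is exactly the content of \cite[Proposition 4.28]{DGO}, I would ultimately organize the surgery estimates along the lines of Osin's treatment of relative geodesics.
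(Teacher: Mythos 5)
You should first note that the paper offers no proof of this lemma to compare against: it is quoted verbatim from \cite[Proposition 4.28]{DGO}, so the relevant benchmark is the proof there, which runs through Osin's isoperimetric characterization of relative hyperbolicity (a finite relative presentation with linear relative Dehn function) together with Osin's lemma producing finite subsets $\Omega_\lambda\subseteq H_\lambda$ such that any isolated $H_\lambda$-component of a cycle in $\G$ represents an element whose $\Omega_\lambda$-length is linearly bounded by the length of the cycle; properness of the relative metrics falls out of that lemma, and the converse is a relative presentation/diagram argument. Measured against this, your plan contains one step that concretely fails: \emph{fineness of $\G$}. By construction $\G$ contains a complete subgraph on every coset $gH_\lambda$, so if some $H_\lambda$ is infinite, the edge from $g$ to $gh$ lies on infinitely many embedded triangles $g, gh, gh'$ with all three edges internal to that coset; hence $\G$ is never fine in any case of interest, and no properness hypothesis can repair this. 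Fineness is a property of the \emph{coned-off} Cayley graph (one cone vertex per coset, no intra-coset edges), and since fineness is not a quasi-isometry invariant, replacing $\G$ by the coned-off graph is not cosmetic --- that translation is itself one of the nontrivial steps your outline would owe.

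Two further gaps. First, generality: the statement (and the proof in \cite{DGO}) allows an arbitrary index set $\Lambda$ and assumes of $G$ only finite generation relative to $\Hl$, whereas the characterizations you invoke are standard for finitely generated $G$ with finitely many peripherals (Farb's bounded coset penetration presupposes a finite generating set of $G$; Bowditch's fine-graph criterion demands finitely many orbits of edges), so even after the repair above your route would prove a weaker statement. Second, in the forward direction, ``bounded coset penetration limits the number of such $h$'' is not an argument: BCP compares two relative quasigeodesics with common endpoints and does not by itself yield local finiteness of $\{h\in H_\lambda : \hat d_\lambda(1,h)\le n\}$; the mechanism that actually delivers properness is the isolated-component lemma quoted above, applied to the cycle obtained by closing up a path realizing $\hat d_\lambda(1,h)$ with the $H_\lambda$-edge from $h$ to $1$. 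These are not stylistic complaints: your closing paragraph defers exactly these estimates to \cite[Proposition 4.28]{DGO}, i.e.\ to the statement being proved, so as written the proposal is a (partially miscalibrated) map of where a proof lives rather than a proof.
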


\begin{rem}\label{HEareStable}
It was proved in \cite[Theorem 2]{SistoZ} that if $H$ is a finitely generated subgroup of $G$ such that $H\h G$, then $H$ is quasiconvex. That is, for  every $\mu\geq 1$ and $c\geq 0$, there exists $C$ such that all $(\mu,c)$-quasigeodesics in $G$ whose  endpoints are in $H$ lie in $N_C(H)$.
So, $H$ is stable if (and only if) $H$ is hyperbolic.

On most of our statements we will assume that $H\h (G,X)$ where $\ga(G,X)$ is hyperbolic. It follows from  Theorem \ref{thm:addQ} that in this case,  $H$ is hyperbolic and, in particular, stable in $G$.
\end{rem}

For later purposes we record a property of cosets of hyperbolically embedded subgroups, encoded in the following definition.

\begin{defn}\label{def:WPD}
Let $G$ be a group and $X$ a generating set of $G$.
A subset $H$ of $G$ satisfies the {\it WPD condition} in $(G,X)$ if 
for every $\e>0$, there is $N>0$ such that for 
every $h_1,h_2\in H$ satisfying $\d_X(h_1,h_2)>N$, we have
$$|\{ g\in G \mid \d_X(h_1, gh_1)< \e, \, \d_X(h_2,gh_2)<\e \}|<\infty. $$
\end{defn}

The following is proved in \cite[Lemma 3.2]{SistoZ}, and in the case where $(G,X)$ is hyperbolic, it can be deduced easily from Theorem \ref{thm:addQ}.

\begin{lemma}\label{lem:HEareWPD}
Suppose that $\Hl \h (G,X)$. Then each $H_\lambda$ is WPD in $(G,X)$.
\end{lemma}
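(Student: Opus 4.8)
The plan is to prove the statement in the case where $\cay{X}{G}$ is hyperbolic, deducing it from the characterization of hyperbolically embedded families in Theorem \ref{thm:addQ}; the general case is exactly \cite[Lemma 3.2]{SistoZ}. So fix $i$, write $H = Q_i$, and fix $\e > 0$. Let $R = R(\e)$ be the geometric-separation constant supplied by condition (Q1), and set $N = R$. I will show that this $N$ works in Definition \ref{def:WPD}.

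Take $h_1, h_2 \in H$ with $\d_X(h_1,h_2) > N$, and consider any $g$ in the set
\[
S = \{\, g \in G \mid \d_X(h_1, gh_1) < \e, \ \d_X(h_2, gh_2) < \e \,\}.
\]
The first step is to show $S \subseteq Q_i$ using (Q1). Since $h_1, h_2 \in Q_i$, the translates $gh_1, gh_2$ lie in $gQ_i$, and by hypothesis $\d_X(h_1, gh_1) < \e$ and $\d_X(h_2, gh_2) < \e$; hence $h_1, h_2 \in Q_i \cap \nei{gQ_i}{\e}$. Therefore $\diam\bigl(Q_i \cap \nei{gQ_i}{\e}\bigr) \ge \d_X(h_1,h_2) > N = R(\e)$, so condition (Q1) (applied with $j=i$) forces $g \in Q_i$. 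Thus every $g \in S$ belongs to the subgroup $Q_i$.

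The remaining step is to bound the cardinality of $S$, and this is the only point requiring care: because $X$ may be infinite, balls in the ambient metric $\d_X$ need not be finite, so I must pass to the intrinsic word metric on $Q_i$ via (Q3). For $g \in S$ the triangle inequality and left-invariance of $\d_X$ give
\[
|g|_X \le \d_X(e, h_1) + \d_X(h_1, gh_1) + \d_X(gh_1, g) < 2\,|h_1|_X + \e,
\]
using $\d_X(gh_1, g) = |h_1|_X$. Since $g \in Q_i$, condition (Q3) yields $|g|_{Y_i} \le \mu\,|g|_X + c < \mu(2|h_1|_X + \e) + c$, a bound depending only on $h_1$, $\e$, $\mu$, and $c$. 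As $Y_i$ is finite by (Q2), the ball of this radius in $(Q_i, \d_{Y_i})$ is finite, so $S$ is finite. This establishes the WPD condition for $H = Q_i$, completing the argument in the hyperbolic case. The main obstacle is precisely the last step: the finiteness is genuinely extracted from (Q3) rather than from the ambient metric, since the geometric-separation condition (Q1) only locates $g$ inside $Q_i$ and does not by itself bound how many such $g$ there are.
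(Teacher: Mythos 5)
Your proof is correct and is essentially the paper's own approach: the paper gives no written argument for this lemma, citing \cite[Lemma 3.2]{SistoZ} for the general case and remarking that when $\Gamma(G,X)$ is hyperbolic the statement ``can be deduced easily from Theorem \ref{thm:addQ}'' --- which is exactly the deduction you carry out. Your two steps are sound: (Q1) with $i=j$ correctly places every such $g$ in $Q_i$, and your passage through (Q2)--(Q3) to the finite word metric $\d_{Y_i}$ is the right way to get finiteness (noting, as you do, that the bound may depend on $h_1$, since the WPD condition only requires the set to be finite, with $N$ depending on $\e$ alone).
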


\section{Limit Sets and Height}
In this section, we show that 
some standard arguments
in the theory of Kleinian groups go through in the more general context of stable subgroups.
A proof of the following lemma is sketched in \cite{gromov-ai} for quasiconvex subgroups of hyperbolic groups. 

\begin{lemma}[Intersection of limit sets]\label{int}
	Let $H_1, H_2$ be stable subgroups of a finitely generated group $G$ with limit sets $\Lambda_1, \Lambda_2$ in $\partial_MG$.
	Let $\Lambda_3$ be the limit set of $H_1 \cap H_2$.
	Then 
	\begin{enumerate}
		\item $H_1 \cap H_2$ is stable, and
		\item $\Lambda_3 = \Lambda_1 \cap \Lambda_2$.
	\end{enumerate}   
\end{lemma}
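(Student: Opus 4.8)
The plan is to prove the two statements together, using the characterization of stability via boundary convex cocompactness (Theorem \ref{coco=stable}) and the finite-height-style argument that underlies it. First I would establish part (1), that $H_1 \cap H_2$ is stable. By Theorem \ref{coco=stable} it suffices to show that $H_1 \cap H_2$ acts boundary convex cocompactly on a fixed Cayley graph $\Gamma = \ga(G,X)$; properness is automatic since $H_1 \cap H_2 \le G$ acts properly on $\Gamma$. The key point is to control the weak hull of $\Lambda_3$. Here I would invoke Lemma \ref{lem:basic}: because both $H_1$ and $H_2$ are stable, a geodesic whose endpoints lie within bounded distance of $H_i$ stays in a uniform neighborhood $N_R(H_i)$. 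So any geodesic $\gamma$ with both endpoints in $\Lambda_3 \subset \Lambda_1 \cap \Lambda_2$ must stay uniformly close to \emph{both} $H_1$ and $H_2$, hence uniformly close to $H_1 \cap H_2$ after the usual coarse-intersection estimate. This is the step I expect to be the main obstacle: passing from ``$\gamma$ lies near $H_1$ and near $H_2$'' to ``$\gamma$ lies near $H_1 \cap H_2$'' requires a coarse version of the fact that a neighborhood of $H_1$ intersected with a neighborhood of $H_2$ is a neighborhood of $H_1 \cap H_2$, which is not automatic and needs a separation/packing argument in the spirit of Hruska--Wise \cite{hruska-wise}.

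To handle that obstacle, I would argue that the set of cosets of $H_1 \cap H_2$ lying in a fixed neighborhood of a long geodesic is uniformly bounded, using that $H_1, H_2$ have bounded intersection: there is a function $D \mapsto D'$ so that if two cosets $g(H_1\cap H_2)$ and $g'(H_1 \cap H_2)$ are $D$-close along a long segment, they coincide. Concretely, if a geodesic stays in $N_R(H_1) \cap N_R(H_2)$ for a long time, then along that segment one can project to $H_1$ and to $H_2$ and, by stability and the thin-triangle property (Lemma \ref{lem:cordes}), the two projections coarsely agree, forcing the segment into a bounded neighborhood of $H_1 \cap H_2$. Cocompactness of the $H_1 \cap H_2$-action on $H_w(\Lambda_3)$ then follows because the quotient is covered by finitely many such bounded pieces. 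Compactness and nonemptiness of $\Lambda_3$ in the relevant case follow once part (2) is in hand.

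For part (2), the inclusion $\Lambda_3 \subseteq \Lambda_1 \cap \Lambda_2$ is immediate: since $H_1 \cap H_2 \le H_i$, any sequence in $(H_1\cap H_2)\cdot o$ is a sequence in $H_i \cdot o$, so its limit lies in $\Lambda_i$ for each $i$; monotonicity of limit sets under inclusion gives the containment. The reverse inclusion $\Lambda_1 \cap \Lambda_2 \subseteq \Lambda_3$ is the substantive direction. Given $\xi \in \Lambda_1 \cap \Lambda_2$, I would pick a stable geodesic ray $\rho$ from $o$ to $\xi$; since $\xi \in \Lambda_1$, the ray $\rho$ is uniformly close to $H_1$ by Lemma \ref{lem:basic} applied to initial segments (using that $\xi$ is a limit point means long segments have endpoints near $H_1$), and likewise uniformly close to $H_2$. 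By the coarse-intersection estimate established for part (1), arbitrarily long initial segments of $\rho$ lie in a uniform neighborhood of $H_1 \cap H_2$, so one can extract a sequence in $(H_1 \cap H_2)\cdot o$ converging to $\xi$, whence $\xi \in \Lambda_3$.

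The technical heart of the whole argument is thus the coarse-intersection lemma ``$N_R(H_1) \cap N_R(H_2) \subseteq N_{R'}(H_1 \cap H_2)$ along geodesics,'' which I would prove using stability (Lemma \ref{lem:basic}) together with the $\delta$-thinness and stability of geodesic triangles from Lemma \ref{lem:cordes}; once this is available, both parts follow by the standard convex-cocompactness bookkeeping, and part (1) is deduced from part (2) via Theorem \ref{coco=stable}. I would be careful to record that all neighborhood constants depend only on the stability functions of $H_1$ and $H_2$, so that the resulting stability function for $H_1 \cap H_2$ is uniform, which is what is needed downstream for the finite-height statement in Theorem \ref{th:intro_main}.
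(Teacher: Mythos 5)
Your plan hinges on the coarse-intersection containment $N_R(H_1)\cap N_R(H_2)\subseteq N_{R'}(H_1\cap H_2)$, and that is exactly where the proposal has a genuine gap. The mechanism you offer for it --- project a long segment to $H_1$ and to $H_2$, observe via stability and Lemma \ref{lem:cordes} that the two projections coarsely agree, and conclude the segment lies near $H_1\cap H_2$ --- does not prove anything: a point of $N_R(H_1)\cap N_R(H_2)$ already comes equipped with a point of $H_1$ within $2R$ of a point of $H_2$, so ``the projections coarsely agree'' merely restates the hypothesis, and the step ``forcing the segment into a bounded neighborhood of $H_1\cap H_2$'' is precisely the statement to be proved; thin triangles cannot supply it. Indeed the containment has nothing to do with hyperbolicity or stability: it holds for arbitrary subgroups of any finitely generated group, by the elementary pigeonhole argument of Hruska--Wise (\cite[Lemma 4.5]{hruska-wise}, which is what the paper itself invokes as \eqref{near_conj} in Section \ref{sec:wp}). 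Concretely, if $g_n\in N_R(H_1)\cap N_R(H_2)$ with $d(g_n,H_1\cap H_2)\to\infty$, write $g_n=h_na_n=k_nb_n$ with $h_n\in H_1$, $k_n\in H_2$ and $a_n,b_n$ in the finite ball of radius $R$; pass to a subsequence with $a_n=a$ and $b_n=b$ constant, and compute $g_ng_1^{-1}=h_nh_1^{-1}=k_nk_1^{-1}\in H_1\cap H_2$, so $d(g_n,H_1\cap H_2)\leq d(1,g_1)$ along the subsequence, a contradiction. (Your auxiliary assertion that $D$-close cosets of $H_1\cap H_2$ along a long segment must coincide is neither true as stated nor what is needed.) With the pigeonhole in hand, your remaining bookkeeping does go through, though you should also handle the edge case where $H_1\cap H_2$ is finite: there $\Lambda_3=\emptyset$, so Theorem \ref{coco=stable} cannot be used to certify stability, which in that case is trivial.

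It is worth noting how much lighter the paper's route is. Part (1) is a one-line citation of Short \cite{short}: intersections of quasiconvex subgroups of an arbitrary finitely generated group are quasiconvex, which combined with stability of $H_1$ gives stability of $H_1\cap H_2$ directly, with no detour through boundary convex cocompactness, weak hulls, or cocompactness. For part (2), the paper runs the pigeonhole argument directly on two uniformly close quasigeodesic rays $\gamma_1\subset H_1$ and $\gamma_2\subset H_2$ converging to $p\in\Lambda_1\cap\Lambda_2$ (uniform closeness is \cite[Corollary 2.6]{cordes15}): infinitely many pairs $g_i\in\gamma_1$, $h_i\in\gamma_2$ satisfy $g_i^{-1}h_i=g$ for a fixed $g$, whence $g_ig_1^{-1}=h_ih_1^{-1}\in H_1\cap H_2$ gives a sequence in $H_1\cap H_2$ converging to $p$. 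So the finite-ball pigeonhole trick you are missing is not an auxiliary lemma supporting the paper's argument --- it essentially \emph{is} the paper's argument, applied once, directly, without the convex-cocompactness scaffolding you build around it.
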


\begin{proof} 
	Let $\Gamma$ be  a Cayley graph of $G$ with respect to a finite generating set.
	The first statement follows immediately from a result of Short \cite{short} which states that the intersections of quasiconvex subgroups of an arbitrary finitely generated group is quasiconvex.
	
	Since $H_1 \cap H_2 \subset H_1$, $\Lambda_3 \subset \Lambda_1$. Similarly, $\Lambda_3 \subset \Lambda_2$.
	Hence $\Lambda_3 \subset \Lambda_1 \cap \Lambda_2$.
	
	To prove the opposite inclusion, let $p \in \Lambda_1 \cap \Lambda_2$.
	We want to show that $p\in \Lambda_3$. Since $H_1, H_2$ are stable, there exist (uniform)
	quasigeodesics $\gamma_i \subset H_i$,
	$i=1,2$ converging to $p$. Without loss of generality, we may assume that $\gamma_1, \gamma_2$
	both start at $1 \in \Gamma$. By stability, $\gamma_1, \gamma_2$ lie in a uniformly bounded neighborhood of each other
	in $\Gamma$ \cite[Corollary 2.6]{cordes15}. In particular, there exists $g \in G$, and infinitely many $g_i \in \gamma_1$, $h_i \in \gamma_2$, 
	such that $g_ig = h_i$ for all $i$. Hence $g = g_i^{-1}h_i$ for all $i$ and so $g_ig_1^{-1} = h_i h_1^{-1} \in H_1 \cap H_2$ for all $i$, thus producing an infinite sequence of elements in $H_1 \cap H_2$ converging to $p$.
\end{proof}

The proof of the following proposition is a reprise of the elementary argument in \linebreak \cite{GMRS} using limit sets in the Morse boundary.

\begin{prop}[Finite height] \label{finht} 
Let $\HH =\{H_1, \cdots, H_l\}$ be a finite collection of 
 stable subgroups of a finitely generated group $G$. Then $\HH$ has finite height.
\end{prop}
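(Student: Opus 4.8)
The plan is to adapt the classical Gromov--Mj--Rafi--Schleimer--Sageev (\cite{GMRS}) height argument to the Morse boundary setting, using Lemma \ref{int} as the key replacement for the hyperbolic-group fact that limit sets of quasiconvex subgroups behave well under intersection. The overall strategy is a proof by contradiction combined with a compactness argument on the limit sets $\Lambda_i \subset \partial_M G$. First I would suppose, for contradiction, that $\HH$ has infinite height. By definition, this means that for every $n$ there exist $n$ essentially distinct conjugates $H_{\sigma(1)}^{g_1}, \dots, H_{\sigma(n)}^{g_n}$ whose intersection is infinite. An infinite intersection of stable subgroups is itself infinite and, by iterating Lemma \ref{int}, stable (since the intersection of finitely many stable subgroups is stable, the intersection being quasiconvex by Short's theorem and hyperbolic), and hence has nonempty limit set $\Lambda$ in $\partial_M G$.

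The heart of the argument is to extract a contradiction from arbitrarily large collections of essentially distinct conjugates with infinite common intersection. By Lemma \ref{int}(2), the limit set of an intersection is the intersection of the limit sets; applied inductively, $\Lambda\bigl(\bigcap_j H_{\sigma(j)}^{g_j}\bigr) = \bigcap_j \Lambda\bigl(H_{\sigma(j)}^{g_j}\bigr)$, and each $\Lambda\bigl(H_{\sigma(j)}^{g_j}\bigr) = g_j \cdot \Lambda_{\sigma(j)}$ is a translate of one of the finitely many limit sets $\Lambda_1,\dots,\Lambda_l$. So infinite height produces, for each $n$, a point of $\partial_M G$ lying in $n$ distinct translates of limit sets. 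Next I would fix a point $p$ in one such deep intersection and pass to the associated conjugates; the key geometric input is that the weak hulls $H_w(H_{\sigma(j)}^{g_j})$ all pass through a uniformly bounded neighborhood of a stable geodesic ray to $p$, by the stability and thinness properties recorded in Lemmas \ref{lem:basic} and \ref{lem:cordes}. Translating by an element of $G$ so that a bounded portion of this configuration sits near the basepoint $1\in\Gamma$, one arranges that all $n$ conjugates have a representative coset within a fixed radius ball. Since the translated cosets are essentially distinct (distinct cosets), and there are only finitely many group elements in a ball of fixed radius, letting $n\to\infty$ forces two of these conjugates to coincide as cosets, contradicting essential distinctness.

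Concretely, the quantitative step is this. Using Lemma \ref{lem:basic}, there is a radius $R$ so that any geodesic with endpoints a bounded distance from a coset lies in the $R$--neighborhood of that coset; and by Lemma \ref{lem:cordes} the relevant geodesic triangles with two stable sides are uniformly thin. These two facts let me show that if two essentially distinct conjugates have limit sets sharing a point $p$, then along a stable geodesic ray toward $p$ the two conjugates (equivalently, their cosets $g_jH_{\sigma(j)}$) fellow-travel within a uniform constant $C$ depending only on the stability data of $\HH$. Hence all $n$ conjugates in the deep intersection fellow-travel a common ray, so after a single left translation they all meet a ball $B(1,C')$ of fixed radius $C'$. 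The map sending each conjugate to its coset representative inside this ball is injective on essentially distinct conjugates, so $n \le |B(1,C')\cap G| =: N_0$, a finite bound independent of $n$. This contradicts infinite height and establishes that the height of $\HH$ is at most $N_0$.

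The main obstacle I anticipate is the uniform fellow-traveling estimate: showing that the constant $C$ bounding how far apart the cosets can be along the ray to their common limit point depends only on the (finitely many) stability functions of $H_1,\dots,H_l$, and not on the particular conjugating elements $g_j$ or on $n$. This is exactly where the Morse/stable geometry must substitute for $\delta$--hyperbolicity of the ambient group: the ambient Cayley graph $\Gamma$ need not be hyperbolic, so I cannot invoke global thin triangles. Instead I must work locally along the stable geodesics themselves, repeatedly invoking Lemma \ref{lem:cordes} (thinness of triangles with stable sides) and Lemma \ref{lem:basic} (stable geodesics between cosets stay in bounded neighborhoods), and carefully tracking that all constants are controlled by the common stability function obtained by taking the maximum of those of the finitely many $H_i$. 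Once this uniformity is in place, the counting argument closes the proof.
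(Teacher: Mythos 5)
Your overall architecture --- intersect limit sets via Lemma \ref{int}, place all the cosets $g_jH_{\sigma(j)}$ uniformly close to a common (quasi)geodesic, then count distinct cosets meeting a fixed finite ball --- is exactly the paper's GMRS-style argument, and your final counting step is correct. But there is a genuine gap, and it sits precisely where you flag ``the main obstacle'': the claim that the cosets fellow-travel a common ray to $p$ within a constant $C$ depending only on the stability data of $\HH$, not on the conjugators $g_j$ or on $n$. You assert this from Lemmas \ref{lem:basic} and \ref{lem:cordes}, but those control geodesics with endpoints near a single coset and triangles with stable sides; upgrading them to an estimate for two stable rays with \emph{different} basepoints that are merely asymptotic to the same Morse boundary point requires an ideal-triangle/limiting argument you never carry out --- and that estimate is the entire content of the proposition beyond bookkeeping. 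The paper bypasses it with Theorem \ref{coco=stable} (boundary convex cocompactness), which you never invoke in this role: since $K=\bigcap_j H_{\sigma(j)}^{g_j}$ is infinite and stable, it is an infinite hyperbolic group, so $\Lambda_\cap$ contains at least \emph{two} points (a point you also miss --- you only record nonemptiness, and a single point yields no geodesic in the weak hull). Taking a bi-infinite geodesic $\gamma$ in the weak hull of $\Lambda_\cap$, the translate $g_j^{-1}\gamma$ lies in the weak hull of $\Lambda_{\sigma(j)}$, and cocompactness of the $H_{\sigma(j)}$-action on its weak hull gives $g_j^{-1}\gamma\subset N_D(H_{\sigma(j)})$ with $D$ depending only on the finite collection $\HH$ --- uniform in $g_j$ by construction, with no thin-triangle limit needed. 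Any $x\in\gamma$ then has $N_D(x)$ meeting every coset $g_jH_{\sigma(j)}$, and your finite-ball count closes the proof.

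To be fair, your single-limit-point variant is likely salvageable: asymptotic stable rays are eventually uniformly close with constants depending only on the stability functions, which one can extract from \cite[Corollary 2.6]{cordes15} together with Lemma \ref{lem:cordes} and an Arzel\`a--Ascoli-type limiting argument, and this would make your route a genuinely hyperbolicity-free reprise of \cite{GMRS}. As written, however, the key uniformity is postulated rather than proved, whereas the paper's appeal to the cocompact action on weak hulls makes it an immediate consequence of Theorem \ref{coco=stable}. If you rewrite, either prove the asymptotic-ray estimate in full or replace it with the weak-hull argument above.
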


\begin{proof} 
As before, let $\Gamma$ be the Cayley graph of $G$ with respect to a finite generating set.
Let $\Lambda_i$ denote the limit set of  $H_i$.
	By Lemma \ref{int}, the limit set of $\cap_j H_{\sigma(j)}^{g_j}$ is the intersection of
	the limit sets of $ H_{\sigma(j)}^{g_j}$, which in turn is the 
	intersection $\Lambda_\cap$ of
	the limit sets $ g_j\Lambda_{\sigma(j)}$
	of $ g_jH_{\sigma(j)}$.  
Supposing that $\cap_j H_{\sigma(j)}^{g_j}$ is infinite, $\Lambda_\cap$ consists of at least two points.

If $\gamma$ is a geodesic in the weak hull of $\Lambda_\cap$, then $g_j^{-1}\gamma$ is in the weak hull of $H_{\sigma(j)}$ and so by Theorem \ref{coco=stable}, $g_j^{-1} \gamma \subset N_D(H_{\sigma(j)})$, for some $D$ depending only on $\HH$. Hence, if $x$ is any point along $\gamma$ then $N_D(x)$ meets each $g_jH_{\sigma(j)}$.
The number of cosets of any $H_i$ is therefore bounded by the cardinality of the ball $N_D(x)$, which is finite as $G$ is finitely generated and the Cayley graph $\Gamma$ is taken with respect to a finite generating set.
The conclusion follows.
\end{proof}

Also, Theorem \ref{coco=stable} immediately gives the following:

\begin{prop}\label{comm}
Let $H \subset G$ be stable. Then $H$ is of finite index in the stabilizer of $\Lambda_H$
and hence in the commensurator $Comm_G(H)$ of $H$ in $G$.
\end{prop}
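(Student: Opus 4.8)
The plan is to read off everything from the boundary convex cocompactness supplied by Theorem~\ref{coco=stable}, combined with the classical principle that a proper action which admits a cocompact subgroup forces that subgroup to be of finite index. Fix a locally finite Cayley graph $\Gamma$ of $G$. We may assume $H$ is infinite, so that $\Lambda_H$ contains at least two points and the weak hull $H_w(H)$ is nonempty (for finite $H$ the limit set is empty and $H$ is not boundary convex cocompact). Write $S=\{g\in G: g\Lambda_H=\Lambda_H\}$ for the stabilizer of $\Lambda_H$; clearly $H\le S$.

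First I would check that $S$ acts on $H_w(H)$. Each $g\in G$ acts on $\Gamma$ as an isometry, and hence carries a bi-infinite geodesic with endpoints $a,b\in\Lambda_H$ to a geodesic with endpoints $ga,gb$. If $g\in S$ then $ga,gb\in g\Lambda_H=\Lambda_H$, so $g$ maps $H_w(H)$ into, and (applying this to $g^{-1}$) onto, itself; thus $H_w(H)$ is exactly $S$-invariant. Since $G$ acts properly by isometries on the locally finite graph $\Gamma$ by left multiplication, so does $S$; in particular, for any vertex $y_0$ and any $R\ge 0$ the set $\{g\in S: d(gy_0,y_0)\le R\}$ is finite.

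The heart of the argument is then the proper-plus-cocompact step. By Theorem~\ref{coco=stable} the action of $H$ on $H_w(H)$ is cocompact, so there are a vertex $y_0\in H_w(H)$ and a radius $R\ge0$ with $H_w(H)\subseteq H\cdot N_R(y_0)$. Given $s\in S$, the point $sy_0$ lies in $H_w(H)$, hence $sy_0\in h\,N_R(y_0)$ for some $h\in H$, which gives $d(h^{-1}sy_0,y_0)\le R$; as $h^{-1}s\in S$, it lies in the finite set $F=\{g\in S: d(gy_0,y_0)\le R\}$. Therefore $S=HF$ and $[S:H]\le|F|<\infty$.

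It remains to deduce the same for the commensurator, for which I would show $Comm_G(H)\le S$. If $g\in Comm_G(H)$ then $H\cap H^g$ has finite index in both $H$ and $H^g$; a finite-index subgroup sits at finite Hausdorff distance from its overgroup (via a right-coset decomposition) and so has the same limit set. Using also $\Lambda_{H^g}=g\Lambda_H$, we obtain $\Lambda_H=\Lambda_{H\cap H^g}=\Lambda_{H^g}=g\Lambda_H$, i.e. $g\in S$. Hence $H\le Comm_G(H)\le S$, and the finiteness of $[S:H]$ forces $[Comm_G(H):H]<\infty$. The one point needing care is the proper-plus-cocompact step: one must ensure that cocompactness of the $H$-action on the (merely quasiconvex) set $H_w(H)$ really provides a single bounded ``fundamental domain'' $N_R(y_0)$, and that properness of the $S$-action is inherited from the local finiteness of $\Gamma$ rather than requiring $H_w(H)$ to be a geodesic space in its own right; the limit-set bookkeeping is routine but should be recorded explicitly.
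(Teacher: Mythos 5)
Your proof is correct and follows essentially the same route as the paper: the stabilizer of $\Lambda_H$ preserves the weak hull and acts properly on it (via local finiteness of the Cayley graph), cocompactness of the $H$-action from Theorem~\ref{coco=stable} then forces $[Stab(\Lambda_H):H]<\infty$, and $Comm_G(H)\subseteq Stab(\Lambda_H)$ finishes the argument. Your version merely fills in details the paper leaves implicit --- the $S$-invariance of the hull, the coset count $S=HF$ in place of the compact fundamental domain phrasing, and the limit-set argument for $Comm_G(H)\subseteq Stab(\Lambda_H)$, which the paper asserts without proof.
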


\begin{proof} The stabilizer
	$Stab (\Lambda_H)$ stabilizes the weak hull $H_w(H)$ and acts properly on it. Since $H$ acts cocompactly on
	 $H_w(H)$ by Theorem \ref{coco=stable} and $H \subset Stab (\Lambda_H)$, it follows that if $K$
	 is a compact (weak) fundamental domain for $H$ in $H_w(H)$ (i.e.\ a compact subset of  $H_w(H)$,
	 whose $H-$translates cover  $H_w(H)$) there are at most finitely
	 many elements $g$ of $Stab (\Lambda_H)$ such that $gK\cap K \neq \emptyset$. It follows that
	 $[Stab (\Lambda_H):H] < \infty$. Since $Comm_G(H) \subset Stab (\Lambda_H)$, the last assertion follows.
\end{proof}

\section{Width and Packing} \label{sec:wp}
To prove finiteness of width and bounded packing, we cannot reprise the argument
in  \cite{GMRS} which uses compactness of the boundary
and global hyperbolicity in an 
essential way. In \cite{hruska-wise}, Hruska and Wise give a new proof of finiteness of width for quasiconvex subgroups of hyperbolic groups. Here, we adapt  their argument to stable subgroups of arbitrary finitely generated groups.

\subsection{Finiteness of width and bounded packing}\label{sec:proof1}

We begin by proving bounded packing for a single stable subgroup.
\begin{theorem} \label{th:bounded_packing}
A stable subgroup $H$ of a finitely generated group $G$ has bounded packing. 
\end{theorem}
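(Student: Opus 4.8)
The plan is to reduce bounded packing to a \emph{coarse centering} statement for the cosets of $H$, and then to finish by a finiteness-of-balls count. Fix the Cayley graph $\Gamma$ of $G$ with respect to a finite generating set, and recall that every left coset $gH$ is stable with the same stability function as $H$. Given $D\ge 0$, suppose $g_1H,\dots,g_NH$ are distinct cosets that are pairwise $D$-close. The goal is to produce a radius $R=R(D)$ depending only on $D$ and the stability function of $H$ (crucially \emph{not} on $N$), together with a point $c\in\Gamma$, such that every $g_iH$ meets the ball $N_R(c)$.

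Granting such $c$ and $R$, the theorem is immediate. Each coset meeting $N_R(c)$ can be written as $pH$ for some $p\in N_R(c)$, and choosing one such representative per coset defines an injection from $\{g_1H,\dots,g_NH\}$ into $N_R(c)$, since a point determines its coset. As $G$ is finitely generated and $\Gamma$ is locally finite, $|N_R(c)|=|N_R(1)|$ is finite, whence $N\le |N_R(1)|$. Because this bound depends only on $D$, the subgroup $H$ has bounded packing.

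To build the coarse center I would use \emph{coarse nearest-point projection} $\pi$ onto the stable set $A_1:=g_1H$. First I would record two coarse properties of $\pi$, both consequences of stability: that it is coarsely well defined (the set of points of $A_1$ coarsely closest to a given point has diameter bounded in terms of the stability function), and that it is coarsely Lipschitz ($d(\pi(a),\pi(b))\le d(a,b)+C$ with $C$ depending only on the stability function). Both follow from the thin-triangle property of triangles with stable sides (Lemma \ref{lem:cordes}) together with the neighborhood control of Lemma \ref{lem:basic}, which confines any geodesic with endpoints near $A_1$ to a bounded neighborhood of $A_1$; these force the projection geodesics to backtrack and yield the bounds exactly as in the hyperbolic case. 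I would then project all the cosets to $A_1$: pairwise $D$-closeness of the $g_iH$ supplies, for each pair, points $a\in g_iH$, $b\in g_jH$ with $d(a,b)\le D$, and coarse-Lipschitzness transports this closeness to the projections, so that the projection regions of the cosets lie pairwise within a bound depending only on $D$. A point $c\in A_1$ in this common region is then within $R(D)$ of every $g_iH$, as required.

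The main obstacle is precisely the uniformity of these projection estimates in a general finitely generated group, which is \emph{not} hyperbolic: the example of parallel lines in the Euclidean plane shows that for projections of close sets to be controlled some hyperbolicity is genuinely needed, and one must also handle the case where two cosets fellow-travel $A_1$ along long segments, so that the relevant projection regions are not of bounded diameter individually. Stability is exactly the local hyperbolicity that resolves this: Lemma \ref{lem:cordes} makes every triangle with two sides on stable cosets uniformly thin, and Lemma \ref{lem:basic} pins geodesics with endpoints near $A_1$ into $N_R(A_1)$. The technical heart is to assemble these into the centering estimate with all constants tracked so as to depend only on $D$ and the stability function and never on $N$; the centering and counting steps above are then formal. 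This coarse centering is the coarse Helly / coarse barycenter property for stable sets recorded in Proposition \ref{cbc}.
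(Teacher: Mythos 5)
Your reduction has the logic inverted relative to what is actually provable at this stage, and the key step is missing. The counting argument at the end is fine, \emph{provided} you have a coarse center $c$ with radius $R=R(D)$ depending only on $D$ and the stability function, \emph{not} on the number $N$ of cosets. But that uniform centering statement is precisely the hard content, and your sketch does not deliver it. Your projection argument produces, for each pair of cosets, points of $A_1=g_1H$ where the two projection regions come close; since (as you yourself note) these regions can be unbounded when cosets fellow-travel $A_1$, pairwise closeness of the regions does not yield a common coarse point without a Helly-type statement for these (possibly unbounded) quasiconvex subsets of $A_1$ with constants independent of $N$. No such statement is proved in your proposal, and the tools you invoke do not supply it: Lemma \ref{lem:cordes} gives thinness of triangles with two stable sides, which yields a coarse center for \emph{triples} of close cosets with uniform constants, but iterating triple-wise centers degrades the constants at every step, so the naive induction gives $R$ depending on $N$ --- exactly what the counting step cannot tolerate. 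Note that the paper's Proposition \ref{cbc}, which is the centering statement you are implicitly assuming, is proved \emph{using} Theorem \ref{widthpack} (hence Theorem \ref{th:bounded_packing}) to first bound $N$ in terms of $D$, and then invokes hyperbolic-space lemmas whose constants depend on $N$; appealing to it here would be circular. A secondary issue: your asserted projection estimates (coarse well-definedness and the additive coarse-Lipschitz bound $d(\pi(a),\pi(b))\le d(a,b)+C$) are stated as if stability gave them ``exactly as in the hyperbolic case,'' but in a general finitely generated group Morse subsets need not be strongly contracting, so these require genuine proof rather than analogy.

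The structural symptom of the gap is that your argument never uses finite height (Proposition \ref{finht}), whereas the paper's proof --- following Hruska--Wise --- is an induction on height, and that induction is exactly what handles the fellow-traveling case you flag. There, the unbounded coarse intersection $N_L(H)\cap N_L(xH)$ is shown to lie in $N_{L'}(K)$ for $K=H\cap xHx^{-1}$, a stable subgroup of \emph{strictly smaller height} in $H$; the cosets $hxH$ in the packing correspond bijectively to cosets $hK$, the thin-triangle (triple) argument shows these are pairwise close in $H$, and the inductive hypothesis (bounded packing of $K$ in $H$) bounds their number. In other words, the case your proposal leaves open is not a technicality to be ``assembled'' from Lemmas \ref{lem:basic} and \ref{lem:cordes}: it is the heart of the theorem, and the known resolution replaces your hoped-for uniform Helly property with an induction that trades the unbounded common region for a packing problem about a lower-height subgroup. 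As written, your proof establishes the theorem only modulo an unproved statement at least as strong as the theorem itself.
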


\begin{proof}
Let $\mathcal{C}$ be a collection of left coset of $H$ in $G$ whose $D$-neighborhoods intersect pairwise. Our goal is to bound the size of $\mathcal{C}$ in terms of $D$, and following \cite{hruska-wise}, we do so by induction on the height of $H$. This is possible by Proposition \ref{finht}. Recall that, for the base of induction (height  zero), $H$ is finite and trivially has bounded packing.

Translating by $G$, we may assume that $H \in \mathcal{C}$. Moreover, if $g H \in \mathcal{C}$, then $gH = hxH$ for $h \in H$ and $x \in B_D(1)$, since $gH$ is $D-$close to $H$. We fix $x \in B_D(1)$ and bound the number of cosets of the form $hxH$ in $\mathcal{C}$. Since the number of such $x$ is bounded, this will establish the theorem.

Hruska and Wise show that for any $L\ge0$ there is an $L' \ge0$ so that 
\begin{align} \label{near_conj}
N_L(H) \cap N_L(xH) \subset N_{L'}(K),
\end{align}
where $K=H \cap xHx^{-1}$ \cite[Lemma 4.5]{hruska-wise}. Moreover, there is a bijection, $hxH \leftrightarrow hK$, and the height of $K$ in $H$ is strictly less than the height of $H$ in $G$ \cite[Lemma 4.2]{hruska-wise}. Since $K$ is a stable subgroup of $H$ by Lemma \ref{int}, the induction hypothesis implies that $K$ has bounded packing in $H$. 

We finally claim that there is a $D'$ depending only on $D$ such that the $D'$-neighborhoods of the $hK$ intersect pairwise in $H$. By bounded packing, this will imply that the number of such cosets is bounded. Using the bijection above, this in turn bounds the size of $\mathcal{C}$ as required. To prove the claim, let $h_1, h_2$ be such that $h_1xH,h_2xH \in \mathcal{C}$. Pick points $a,b,c \in G$ such that 
\[
a \in N_D(H) \cap N_D(h_1xH), \\
b \in N_D(H) \cap N_D(h_2xH), \\
c \in N_D(h_1xH) \cap N_D(h_2xH). 
\]

Applying Lemma \ref{lem:basic}, we see that for $R\ge0$ depending on $D$,
$ [a,b] \subset N_R(H)$, $[b,c] \subset  N_R(h_2xH)$, and  $[a,c] \in N_R(h_1xH)$. As these form a triangle of uniformly stable geodesics (Lemma \ref{lem:basic}), there is a $\delta \ge0$ depending only on $D$ (and the stability function of $H$) such that this triangle is $\delta$-thin (Lemma \ref{lem:cordes}).  
Hence, there is a $w\in [a,b]$ within distance $\delta$ from both $[b,c]$ and $[a,c]$. We conclude that $w$ is contained in the intersection of the $(2R+\delta)$--neighborhoods of $H$ and $h_ixH$ for $i = 1,2$. Applying the containment  Eq. \ref{near_conj}, there is a $L'\ge0$ such that 
\[
w \in N_{2R+\delta}(H) \cap N_{2R+\delta}(h_ixH) \subset N_{L'}(h_iK)
\]
for $i =1,2$. We conclude that the $G$-distance between $h_1K$ and $h_2K$ is at most $2L'$. Since $H$ is undistorted in $G$, we conclude that there distance in $H$ is at most $D'$, which depending only on $D$ and the stability function of $H$. This concludes the proof.
\end{proof}

We now want to use bounded packing to study intersections of conjugates rather than cosets, and to pass from cosets to conjugates we will need the following lemma.

\begin{lemma}\label{close_cosets}
 Let $H_1,H_2$ be stable subgroups. Then there exists $D\geq 0$ so that whenever $|H_1^{g_1}\cap H_2^{g_2}|=\infty$ for some $g_1,g_2 \in G$, then the cosets $g_1H_1$ and $g_2H_2$ have intersecting $D$--neighborhoods.
\end{lemma}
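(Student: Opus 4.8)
The plan is to use the limit-set technology of Section 3 to produce, inside the two cosets, a single bi-infinite geodesic whose distance to each coset is controlled by the stability data alone. First I would record that a conjugate of a stable subgroup is again stable with the same stability function, so that $H_1^{g_1}$ and $H_2^{g_2}$ are stable, and I would identify their limit sets in $\partial_M G$ as $g_1\Lambda_1$ and $g_2\Lambda_2$, where $\Lambda_i=\Lambda_{H_i}$. This identification follows from the elementary estimate $\dx(g_ihg_i^{-1}, g_ih)=\dx(g_i^{-1},1)=|g_i|$, which shows that the orbit $H_i^{g_i}\cdot 1$ and the coset $g_iH_i$ remain within bounded Hausdorff distance and hence have the same limit set.

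Next, since $K:=H_1^{g_1}\cap H_2^{g_2}$ is infinite, Lemma \ref{int} applied to the stable subgroups $H_1^{g_1},H_2^{g_2}$ shows that $K$ is stable and $\Lambda_K=g_1\Lambda_1\cap g_2\Lambda_2$. By Theorem \ref{coco=stable}, $K$ is hyperbolic and $\partial K\cong\Lambda_K$; as $K$ is an infinite hyperbolic group it contains an infinite-order element, whose two fixed points provide distinct points $p,q\in\Lambda_K$. Then any bi-infinite geodesic $\gamma$ from $p$ to $q$ has both endpoints in $g_1\Lambda_1$ and both in $g_2\Lambda_2$, so $\gamma$ lies simultaneously in the weak hull of $g_1\Lambda_1$ and in the weak hull of $g_2\Lambda_2$.

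The crucial uniformity input, exactly as in the proof of Proposition \ref{finht}, is that cocompactness of the $H_i$-action on $H_w(H_i)$ (Theorem \ref{coco=stable}) yields $H_w(H_i)\subset N_{D_i}(H_i)$ for a constant $D_i$ depending only on $H_i$. Translating by $g_i$ and using left-invariance of the metric, the weak hull of $g_i\Lambda_i$, which equals $g_i\cdot H_w(H_i)$, lies in $N_{D_i}(g_iH_i)$, with $D_i$ independent of $g_i$. Hence $\gamma\subset N_{D_1}(g_1H_1)\cap N_{D_2}(g_2H_2)$, and choosing any $x\in\gamma$ exhibits a point in $N_D(g_1H_1)\cap N_D(g_2H_2)$ for $D=\max(D_1,D_2)$, proving the cosets have intersecting $D$-neighborhoods.

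The main obstacle is precisely this uniformity of $D$, and it is where a naive argument breaks down: for $k\in K$ the points $k^ng_1\in g_1H_1$ and $k^ng_2\in g_2H_2$ are at distance $|g_1^{-1}g_2|$, giving closeness of the cosets but with a bound that depends on the conjugators. The weak-hull argument removes this dependence, since the neighborhood constants $D_i$ arise only from the cocompact action of $H_i$ on its own weak hull and not from $g_1,g_2$.
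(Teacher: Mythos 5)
Your proposal is correct and follows essentially the same route as the paper's proof: take an infinite-order (Morse) element of $K=H_1^{g_1}\cap H_2^{g_2}$ to get two points of $\Lambda_K=g_1\Lambda_{H_1}\cap g_2\Lambda_{H_2}$ via Lemma \ref{int}, join them by a geodesic in the weak hull, and use cocompactness of each $H_i$ on its weak hull (Theorem \ref{coco=stable}) to obtain a $D$ independent of $g_1,g_2$ --- you merely spell out the limit-set identification and the uniformity, which the paper leaves implicit. One small quibble: a conjugate $H^{g}$ is stable but not necessarily with the \emph{same} stability function (it lies within Hausdorff distance $|g|$ of the coset $gH$, which is what has the same function), but this is harmless since, as you correctly note, the uniform constant comes only from the cocompactness step and not from stability of the conjugates.
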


\begin{proof}
 Set $K = H_1^{g_1} \cap H_2^{g_2}$ and let $k \in K$ be an infinite order element (notice that $K$ is hyperbolic since it is stable by Lemma \ref{int}.(1), and in particular $K$ is infinite if and only if it contains an infinite order element). Since $k$ is a Morse element of $G$, it produces two points $k^\infty$ and  $k^{-\infty}$ in 
\[
\Lambda_K = g_1 \Lambda_{H_1} \cap g_2 \Lambda_{H_2} \subset \partial_M G,
\]
using Lemma \ref{int}. Let $\gamma$ be a geodesic in the weak hull of $K$ joining $k^{-\infty}$ and $k^{\infty}$. Then $g_1^{-1}\gamma$ lies in the weak hull of $H_1$ and $g_2^{-1}\gamma$ lies in the weak hull of $H_2$. Since each $H_i$, $i=1,2$, acts cocompactly on its weak hull by Theorem \ref{coco=stable}, there is a $D$ depending only on $H_1,H_2$, such that $\gamma$ lies in the intersection of $N_D (g_1H_1)$ and $N_D(g_2H_2)$.
\end{proof}

\begin{prop}\label{conj_rep}
 Let $H_1,\dots,H_l$ be stable subgroups of a finitely generated group $G$. Then there exist finitely many conjugacy classes of subgroups of $G$ of the form $H_{i(1)}^{g_1}\cap\dots\cap H_{i(n)}^{g_n}$, where $i(j)\in\{1,\dots,n\}$ and $g_j\in G$ and $n\in\mathbb N$.
\end{prop}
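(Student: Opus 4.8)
The plan is to treat infinite and finite intersections separately and bound the number of conjugacy classes each produces. Write $K = H_{i(1)}^{g_1}\cap\dots\cap H_{i(n)}^{g_n}$ with $n\ge 1$, and fix a Cayley graph $\Gamma$ of $G$ on a finite generating set. The key preliminary observation is that $K\subseteq H_{i(1)}^{g_1}$, so conjugating by $g_1^{-1}$ shows every such $K$ is $G$-conjugate to a subgroup of a single $H_i$. This already disposes of the \emph{finite} intersections: each $H_i$ is hyperbolic by Theorem \ref{coco=stable}, a hyperbolic group has only finitely many conjugacy classes of finite subgroups, and there are finitely many indices $i$; hence the finite $K$ fall into finitely many $G$-conjugacy classes.

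For the infinite intersections I would first bound the number of conjugates that must be intersected. If $K$ is infinite, then every pairwise intersection $H_{i(j)}^{g_j}\cap H_{i(k)}^{g_k}$ contains $K$ and is therefore infinite, so by Lemma \ref{close_cosets} the cosets $g_1H_{i(1)},\dots,g_nH_{i(n)}$ are pairwise $D$-close for a uniform $D=D(\HH)$. Bounded packing for the family $\HH$ — which follows from Theorem \ref{th:bounded_packing} by summing the single-subgroup bounds over the finitely many $H_i$ — shows that at most $N=N(D)$ of these cosets are distinct. Since equal cosets yield equal conjugate subgroups, I may discard redundant conjugates without changing $K$ and assume $n\le N$.

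It then remains to bring each infinite $K$, up to conjugacy, into a fixed finite list of candidates, and here I would reuse the hull argument of Lemma \ref{close_cosets}. Pick an infinite-order $k\in K$ (which exists since $K$ is stable by Lemma \ref{int}.(1), hence hyperbolic by Theorem \ref{coco=stable}), let $\gamma$ be a geodesic in the weak hull of $K$ joining $k^{-\infty}$ to $k^{\infty}$, and note by iterating Lemma \ref{int} that $k^{\pm\infty}\in\bigcap_j g_j\Lambda_{H_{i(j)}}$. Then $g_j^{-1}\gamma$ lies in the weak hull of $H_{i(j)}$, and cocompactness (Theorem \ref{coco=stable}) gives $\gamma\subseteq N_D(g_jH_{i(j)})$ for all $j$, with $D$ uniform. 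Any $p\in\gamma$ is thus within $D$ of every coset, so after conjugating $K$ by $p^{-1}$ I may assume $1$ is within $D$ of each coset; choosing representatives $x_j\in B_D(1)$ with $x_jH_{i(j)}=g_jH_{i(j)}$ yields $K=\bigcap_j H_{i(j)}^{x_j}$ with $x_j\in B_D(1)$. As $B_D(1)$ is finite, $n\le N$, and $i(j)\in\{1,\dots,l\}$, there are finitely many such tuples and hence finitely many candidate subgroups, which completes the infinite case. The point requiring the most care is that every constant ($D$, $N$, and the ball radius) must depend only on $\HH$ and not on the individual intersection; this uniformity is precisely what Lemma \ref{close_cosets}, the family version of bounded packing, and the cocompactness in Theorem \ref{coco=stable} supply. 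The conceptually separate obstacle is the finite intersections, for which packing gives no control at all, and which are handled only through the observation that each conjugates into a single hyperbolic $H_i$.
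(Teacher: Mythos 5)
Your proof is correct, but it takes a genuinely different route from the paper's on the infinite intersections. The paper argues by induction on the number $n$ of conjugates: the base case $n=2$ is handled by Lemma \ref{close_cosets} (an infinite $K_1\cap K_2^{g}$ forces $gK_2$ to come uniformly close to $K_1$, so after left-multiplying $g$ by an element of $K_1$ --- which only conjugates the intersection by an element of $K_1$ --- the coset has a representative in a finite ball), and the inductive step replaces the family by a finite set of stable representatives of the infinite $n$-fold intersections (finite by induction, stable by Lemma \ref{int}) and views any $(n+1)$-fold intersection as a $2$-fold intersection of conjugates of those representatives; it is finiteness of height (Proposition \ref{finht}), not bounded packing, that makes the bounded range of $n$ suffice. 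You instead give a one-shot argument: by Lemma \ref{int} and the cocompactness in Theorem \ref{coco=stable}, a bi-infinite geodesic $\gamma$ in the weak hull of an infinite intersection lies in the uniform $D$-neighborhood of \emph{every} defining coset simultaneously, so translating a point of $\gamma$ to the identity puts all coset representatives in $B_D(1)$ at once, producing an explicit finite candidate list $\bigl\{\bigcap_j H_{i(j)}^{x_j} : x_j\in B_D(1)\bigr\}$. This is the geometric engine of Proposition \ref{finht} and Lemma \ref{close_cosets} run for all cosets at once; it buys directness, avoids the induction, and yields uniform explicit representatives, while the paper's induction only ever needs the two-subgroup geometry at each stage. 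Two remarks: first, your bounded-packing step bounding $n$ is correct and non-circular (Theorem \ref{th:bounded_packing} precedes this proposition and the family version is obtained by summing, exactly as in Theorem \ref{widthpack}), but it is redundant --- once every coset meets $B_D(p)$ for a point $p$ on $\gamma$, the number of distinct cosets is automatically at most $l\cdot |B_D(1)|$, so the ball argument bounds $n$ by itself, which is the role height plays in the paper. Second, your treatment of the finite intersections (conjugating into a single hyperbolic $H_i$ and invoking finiteness of conjugacy classes of finite subgroups) is identical to the paper's opening reduction.
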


\begin{proof}
 Since each $H_i$ is hyperbolic, it contains only finitely many conjugacy classes of finite subgroups, and hence we can can restrict to considering infinite intersections. We show by induction on $n$ that, whenever $K_1,\dots,K_{l'}$ are stable subgroups of $G$, there are only finitely many  conjugacy classes of infinite subgroups of $G$ of the form $K_{i(1)}^{g_1}\cap\dots\cap K_{i(n)}^{g_n}$. This suffices in view of finiteness of height (Proposition \ref{finht}).
 
For $n=1$ this is obvious. Let us prove it for $n=2$. It suffices to show that there are only finitely many conjugacy classes of infinite subgroups of $G$ of the form $K_{1}\cap K_{2}^{g}$. By the Lemma \ref{close_cosets} there is some $D\geq 0$ depending only on $K_1,K_2$  so that $gK_2$ intersects $N_D(K_1)$, so that up to multiplying $g$ on the left by an element of $K_1$ we have that $gK_2$ has a representative in the ball of radius $D$ around the identity, which is finite.

Assuming that the claim holds for some $n$, we can replace $K_1,\dots,K_{l'}$ with a set $\mathcal K$ of representatives of the conjugacy classes of infinite subgroups of the form $K_{i(1)}^{g_1}\cap\dots\cap K_{i(n)}^{g_n}$. This is a set of stable subgroups (by Lemma \ref{int}), and it is finite by the inductive hypothesis. As any intersection of $n+1$ conjugates of the $K_i$ is an intersection of two conjugates of elements of $\mathcal K$, this completes the proof.
\end{proof}

\begin{theorem}\label{widthpack}
Let $\HH =\{H_1, \cdots, H_l\}$ be a finite collection of stable subgroups of a finitely generated group $G$. Then $\HH$ has bounded packing in $G$. Further, $\HH$ has finite width.
\end{theorem}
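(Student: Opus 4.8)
The plan is to derive both statements from the single-subgroup bounded packing result (Theorem \ref{th:bounded_packing}) together with Lemma \ref{close_cosets}. Bounded packing for the family $\HH$ follows almost formally from bounded packing for each individual $H_i$, and finite width then follows from bounded packing by converting a collection of essentially distinct conjugates with pairwise infinite intersections into a collection of pairwise close distinct cosets.

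For bounded packing, I would fix $D \ge 0$ and use the reformulation recorded after the definition: it suffices to bound the cardinality of a collection $\{g_1 H_{\sigma(1)}, \dots, g_r H_{\sigma(r)}\}$ of pairwise $D$-close distinct cosets of elements of $\HH$. Partition such a collection according to the index $\sigma(k) \in \{1,\dots,l\}$. For each fixed $i$, the cosets in the collection of the form $gH_i$ are pairwise $D$-close distinct cosets of the \emph{single} stable subgroup $H_i$, so by Theorem \ref{th:bounded_packing} there are at most $N_i(D)$ of them. Summing over the finitely many indices gives a bound $N(D) = \sum_{i=1}^l N_i(D)$ on $r$, which is exactly bounded packing for $\HH$.

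For finite width, I would take any collection of essentially distinct conjugates $H_{\sigma(1)}^{g_1}, \dots, H_{\sigma(k)}^{g_k}$ (that is, the cosets $g_1 H_{\sigma(1)}, \dots, g_k H_{\sigma(k)}$ are distinct) whose pairwise intersections are all infinite, and bound $k$. Let $D_0$ be the maximum of the finitely many constants produced by Lemma \ref{close_cosets} as the pair $(H_a, H_b)$ ranges over $\{1,\dots,l\}^2$. For any two indices, $|H_{\sigma(i)}^{g_i} \cap H_{\sigma(j)}^{g_j}| = \infty$ forces $N_{D_0}(g_i H_{\sigma(i)})$ and $N_{D_0}(g_j H_{\sigma(j)})$ to intersect, and hence the two cosets to lie within distance $2D_0$. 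Thus $\{g_1 H_{\sigma(1)}, \dots, g_k H_{\sigma(k)}\}$ is a collection of pairwise $2D_0$-close distinct cosets, and the bounded packing bound from the previous paragraph gives $k \le N(2D_0)$. Since this bound is independent of the chosen conjugates, the width of $\HH$ is finite.

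The substantive work has already been carried out in Theorem \ref{th:bounded_packing} (the induction on height) and in Lemma \ref{close_cosets}, so I do not expect a genuine obstacle here; the only points requiring care are the uniformity of the constants, namely taking $N(D) = \sum_i N_i(D)$ over the finitely many subgroups and taking $D_0$ to be the maximum of the Lemma \ref{close_cosets} constants over the finitely many pairs. Both are harmless precisely because the family $\HH$ is finite.
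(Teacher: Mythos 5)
Your proposal is correct and follows essentially the same route as the paper: bounded packing for $\HH$ is obtained by partitioning a pairwise $D$-close collection according to which $H_i$ the cosets belong to and summing the single-subgroup bounds from Theorem \ref{th:bounded_packing}, while finite width is deduced by using Lemma \ref{close_cosets} (with a uniform constant over the finitely many pairs) to turn pairwise infinite intersections of conjugates into pairwise close distinct cosets, then invoking bounded packing. Your version merely makes explicit the uniformity bookkeeping (the maximum $D_0$ over pairs and the factor converting intersecting neighborhoods into coset distance) that the paper leaves implicit in the phrase ``for $D$ depending only on $\HH$.''
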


\begin{proof}
First, let $\mathcal{C}$ be a collection of coset of subgroups in $\HH$ whose $D$--neighborhoods intersect pairwise. By Theorem \ref{th:bounded_packing}, for each $i = 1, \ldots, l$ there is a $N_i \ge0$ such that the number of coset of $H_i$ in $\mathcal{C}$ is at most $N_i$. Hence, $\# \mathcal{C} \le \sum_{i=1}^{l}N_i$ and so $\HH$ has bounded packing.

Finite width comes from the fact that, by Lemma \ref{close_cosets}, if $H_i^{g_1} \cap H_j^{g_2}$ is infinite for $i,j \in \{1, \ldots, l\}$ and $g_1,g_2 \in G$, then the cosets $g_1H_i$ and $g_2H_j$ have intersecting $D$--neighborhoods, for $D$ depending only on $\HH$. Finite width now follows from bounded packing.
\end{proof}

 \section{Hyperbolically embedded subgroups and bounded cohomology}

\subsection{Intersections of hyperbolically embedded subgroups}

In this subsection we prove a few results about intersections of (conjugates of) hyperbolically embedded subgroups.

 \begin{prop}\label{thm:main}
 	Let $G$ be a group with generating set $X$ and assume that $\ga(G,X)$ is hyperbolic. Let $H_1$ and $H_2$ be hyperbolically embedded in $(G,X)$. Then
 	\begin{enumerate}
 		\item[{\rm (i)}] there are finitely many $H_1$-conjugacy classes of subgroups of the form $H_1\cap H_2^g$ with $g\in G$.
 		\item[{\rm (ii)}]  If $K_1,\dots, K_n$ are representatives of the $H_1$-conjugacy classes of $H_1\cap H_2^g$, then $H_1$ is hyperbolic relative to $K_1,\dots,K_n$.
 		\item[{\rm (iii)}] $\{K_1,\dots,K_n\}\hookrightarrow_h (G,X)$.
 	\end{enumerate}
 \end{prop}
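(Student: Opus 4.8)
The plan is to record the geometric input first and then handle the three parts in order, since part~(i) supplies the finite family that (ii) and (iii) refer to. By Remark~\ref{HEareStable} and Theorem~\ref{thm:addQ}, each $H_k$ is hyperbolic and quasiconvex in the hyperbolic graph $\ga(G,X)$. Moreover, applying the geometric separation property (Q1) with $i=j=k$ shows that each $H_k$ is \emph{almost malnormal} in $G$: if $H_k^g\cap H_k$ is infinite then, since this intersection is unbounded and is contained in $H_k\cap N_{|g|_X}(gH_k)$, (Q1) forces $g\in H_k$. I will use these two facts (quasiconvexity and almost malnormality of $H_1,H_2$) repeatedly.

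For (i), I would adapt the argument of Lemma~\ref{close_cosets} and Proposition~\ref{conj_rep}. The finite intersections $H_1\cap H_2^g$ fall into finitely many $H_1$-conjugacy classes because $H_1$ is hyperbolic and hence has only finitely many conjugacy classes of finite subgroups. For an infinite intersection, the common limit points of $H_1$ and $gH_2$ force these two cosets to fellow-travel, so after multiplying $g$ on the left by a suitable element of $H_1$ the element $g$ can be taken in a bounded region; the geometric separation (Q1) for $H_2$ then leaves only finitely many possibilities for $gH_2$ up to the left $H_1$-action, hence finitely many $H_1$-conjugacy classes of $H_1\cap H_2^g$.

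The technical heart is the almost malnormality of $\{K_i\}$, which I would establish once and for all. Write $K_i=H_1\cap H_2^{g_i}$ and suppose $K_i^h\cap K_j$ is infinite for some $h\in H_1$; note $K_i^h=H_1\cap H_2^{hg_i}$ since $h\in H_1$. Then $H_2^{hg_i}\cap H_2^{g_j}$ is infinite, so almost malnormality of $H_2$ gives $H_2^{hg_i}=H_2^{g_j}$, whence $K_i^h=H_1\cap H_2^{hg_i}=H_1\cap H_2^{g_j}=K_j$; as the $K_i$ represent distinct $H_1$-conjugacy classes this forces $i=j$, and then $K_i=K_i^h\subseteq H_2^{hg_i}\cap H_2^{g_i}$, so malnormality of $H_2$ again yields $h\in H_2^{g_i}\cap H_1=K_i$. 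Thus $\{K_i\}$ is an almost malnormal family of quasiconvex subgroups of $H_1$, quasiconvexity being immediate since each $K_i$ is an intersection of quasiconvex subgroups. Part~(ii) is then exactly Bowditch's criterion that a hyperbolic group is relatively hyperbolic with respect to any almost malnormal collection of quasiconvex subgroups.

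For (iii), I would run the same malnormality computation one level up in $G$: if $K_i^g\cap K_j$ is infinite for $g\in G$, then $H_1^g\cap H_1$ is infinite (forcing $g\in H_1$ by almost malnormality of $H_1$), after which the previous paragraph applies verbatim to give $i=j$ and $g\in K_i$. Hence $\{K_i\}$ is almost malnormal and quasiconvex in $G$; combined with finite generation and undistortedness of each $K_i$ (again from quasiconvexity in $\ga(G,X)$), the standard equivalence between almost malnormality and the geometric separation condition (Q1) for quasiconvex subgroups of a hyperbolic space lets me verify (Q1)--(Q3), so Theorem~\ref{thm:addQ} gives $\{K_i\}\h(G,X)$. (Alternatively (iii) follows from (ii) together with $H_1\h(G,X)$ via Lemma~\ref{lem:relhyp} and transitivity of hyperbolic embeddedness.) The step I expect to be the main obstacle is the self-normalizing conclusion $h\in K_i$ in the malnormality argument, together with keeping all fellow-traveling and separation constants dependent only on the hyperbolically embedded data; the clean resolution is that almost malnormality of the ambient $H_2$ (equivalently its geometric separation (Q1)) upgrades ``$K_i^h=K_i$'' to ``$h\in K_i$'', which is precisely what both Bowditch's criterion and Theorem~\ref{thm:addQ} demand.
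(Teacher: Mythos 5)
Your part (i) is essentially the paper's own argument: the paper routes it through Proposition \ref{conj_rep} (whose $n=2$ case is exactly your ``multiply $g$ on the left into a bounded ball'' step, via Lemma \ref{close_cosets}) together with almost malnormality, and your treatment of finite intersections via finiteness of conjugacy classes of finite subgroups also matches. Part (ii) is correct but takes a genuinely different route. The paper verifies conditions (Q1)--(Q3) of Theorem \ref{thm:addQ} for $\{K_i\}$ inside $(H_1,\dx)$; its key step is the claim that $aK_i\neq bK_j$ forces $at_iH_2\neq bt_jH_2$ (where $K_i=H_1\cap H_2^{t_i}$ and $K_i\subseteq \nei{t_iH_2}{R}$), so that geometric separation of $\{K_i\}$ is \emph{inherited} from the geometric separation of $H_2$ in $(G,X)$. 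You instead prove, by the same coset algebra but phrased through almost malnormality of $H_2$, that $\{K_i\}$ is an almost malnormal family of quasiconvex subgroups of $H_1$, and then invoke Bowditch's criterion. This is legitimate precisely because $H_1$ is an honest finitely generated hyperbolic group with a proper Cayley graph; your route imports Bowditch as a black box and avoids the explicit separation computation, while the paper's route stays self-contained relative to Theorem \ref{thm:addQ}. (Two small points in your malnormality computation: upgrading $K_i^h=K_i$ to $h\in K_i$ uses that $K_i$ is infinite, which your hypothesis does supply; and your ``infinite implies unbounded'' steps use that $\dx$ restricted to each $H_k$ is proper, which follows from (Q2)+(Q3) but deserves mention since $\dx$ itself is not proper.)

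The genuine gap is in your primary route to (iii). You assert a ``standard equivalence between almost malnormality and the geometric separation condition (Q1) for quasiconvex subgroups of a hyperbolic space.'' That equivalence is standard only when $\ga(G,X)$ is \emph{proper}, i.e.\ when $X$ is finite and $G$ is hyperbolic --- exactly the special case mentioned in the paper's introduction. Here $X$ may be infinite, so balls in $\ga(G,X)$ are not finite, and the usual pigeonhole argument that converts a long coarse intersection of $Q_i$ with $gQ_j$ into an infinite intersection of conjugates (which is what lets malnormality feed back into the uniform statement (Q1)) collapses. This failure of properness is precisely why the paper introduces the WPD condition (Definition \ref{def:WPD}, Lemma \ref{lem:HEareWPD}) and proves Lemma \ref{lem:finitenessdoublecosets} as a substitute, and why its proof of (ii) derives separation of $\{K_i\}$ from the already-given separation of $H_2$ rather than from malnormality. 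Fortunately, your parenthetical fallback --- (iii) follows from (ii), i.e.\ $\{K_i\}\h (H_1,Y)$ via Lemma \ref{lem:relhyp}, combined with $H_1\h(G,X)$ and transitivity of hyperbolic embeddedness --- is exactly the paper's argument (it cites \cite[Theorem 4.35]{DGO} for this), so your proof of (iii) stands once you promote that remark to the main argument and drop the unproved equivalence.
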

 
 \begin{proof}
  (i) follows from Proposition \ref{conj_rep}, which immediately gives finitely many $G$--conjugacy classes, and the fact that hyperbolically embedded subgroups are almost malnormal \cite[Proposition 2.10]{DGO}.
  
  Once we prove (ii), (iii) follows from (ii) and \cite[Theorem 4.35]{DGO}. Hence, let us prove (ii).
  
  By Lemma \ref{lem:relhyp}, it is enough to show that $\{K_i\}_{i=1}^n \h (H_1, Y)$ where $Y$ is some finite generating set of $H_1$. Since $H_1$ is quasi-isometrically embedded in $(G,X)$, we can work with $\dx$ rather than ${\rm d}_Y$. Notice that since  
$\ga(G,X)$ is hyperbolic, $\ga(H_1, Y)$ is hyperbolic as well. Therefore, by Theorem \ref{thm:addQ}, we need to prove that each $K_i$ is finitely generated and quasi-isometrically embedded in $(H_1,\dx)$, which follows from Lemma \ref{int} (from which one deduces that $K_i$ is a finitely generated undistorted subgroup of $H_1$). We also need to show that $\{K_i\}_{i=1}^n$ is geometrically separated in $(H_1,\dx)$. 

In order to prove geometric separation let $t_i$ be so that
$K_i= H_1\cap H_2^{t_i}$. Notice that there exists $R>0$ such that for every $i$, $K_i\subseteq \nei{t_iH_2}{R}$. We claim that for every $a,b\in H_1$ if $aK_i\neq bK_j$ then $at_iH_2\neq bt_j H_2$.
Once the claim is proved, the geometric separation of $\{K_i\}_{i=1}^n$ follows directly from the geometric separation of $H_2$, since for every $g\in H_1$ we have $gK_i\subseteq \nei{gt_iH_2}{R}$.

Suppose that $at_iH_2= bt_jH_2$, and let $h_2\in H_2$ such that $at_ih_2= bt_j$, then 
$$K_i^a=H_1\cap (H_2^{t_i})^a = H_1\cap H_2^{at_i}= H_1\cap H_2^{at_ih_2}= H_1\cap H_2^{bt_j}=K_j^b.$$
In particular, $i=j$ since by assumption distinct $K_i$'s are representatives of distinct $H_1$-conjugacy classes. 

Hence,
$$aK_i=H_1\cap bt_ih_2^{-1}t_i^{-1}H_2^{t_i}=H_1\cap bH_2^{t_i}=bK_j,$$
and the claim is proved.
 \end{proof}

 \begin{lemma}\label{lem:finitenessdoublecosets}
Let $G$ be a group, let $X$ be a generating set of $G$ so that $\ga(G,X)$ is hyperbolic, and let $H_1,H_2$ be hyperbolically embedded in $(G,X)$. Then for each $R\geq 0$ there exists $C$ such that there only finitely many $s\in G$ satisfying 
\begin{equation}\label{eq:doublecosets}
\dx(1,s)\leq R\;\text{ and }\;\diam(H_1\cap \nei{sH_2}{R})\geq C.
\end{equation}
\end{lemma}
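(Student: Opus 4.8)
The plan is to argue by contradiction. Fix $R$ and suppose that no $C$ works; then for every $C$ there are infinitely many $s$ with $\dx(1,s)\le R$ and $\diam(H_1\cap\nei{sH_2}{R})\ge C$, and a diagonal choice produces pairwise distinct elements $s_n$ with $\dx(1,s_n)\le R$ and $\diam(H_1\cap\nei{s_nH_2}{R})\ge n$. The point to keep in mind throughout is that, since $X$ may be infinite, the ball $\{s:\dx(1,s)\le R\}$ is typically infinite, so finiteness cannot come from compactness of this ball and must instead be squeezed out of the large-overlap hypothesis.

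The first step is a geometric normalisation exploiting that $1$ sits in every overlap. Indeed $1\in H_1$ and $\dx(1,s_n)\le R$ give $1\in H_1\cap\nei{s_nH_2}{R}$, and since this set has diameter $\ge n$ it contains some $b_n\in H_1$ with $\dx(1,b_n)\ge n/2$. Using hyperbolicity of $\ga(G,X)$ together with the (uniform) quasiconvexity of $H_1$ and of the cosets $s_nH_2$ (Remark \ref{HEareStable} and Theorem \ref{thm:addQ}), I would show that the geodesic $[1,b_n]$ lies in $\nei{H_1}{R'}\cap\nei{s_nH_2}{R'}$ for a constant $R'$ depending only on $R$ and the hyperbolically embedded data. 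The gain is that all of these fellow-travelling segments now emanate from the common basepoint $1$.

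The crucial step couples a pigeonhole with geometric separation of $H_2$. Fix a length $N$ to be chosen, and for $n\ge 2N$ let $w_n$ be the point of $[1,b_n]$ at distance $N$ from $1$; as $w_n\in\nei{H_1}{R'}$, it lies within $R'$ of some $b_n'\in H_1\cap B_{N+R'}(1)$, a finite set because $H_1$ is finitely generated and undistorted (Theorem \ref{thm:addQ}(Q2),(Q3)). By pigeonhole infinitely many $n$ share one value $b_n'=b'$, and for any two such indices $m,n$ thin triangles force $[1,w_m]$ and $[1,w_n]$ to fellow-travel; hence a subsegment of length about $N$ stays uniformly close to both $s_mH_2$ and $s_nH_2$, giving $\diam\big(s_mH_2\cap\nei{s_nH_2}{\epsilon_0}\big)\ge N-2R'$ for a constant $\epsilon_0$ depending only on $R$. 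Choosing $N$ above the geometric-separation threshold $R(\epsilon_0)$ for $H_2$ (Theorem \ref{thm:addQ}(Q1)) and translating by $s_m$ yields $s_m^{-1}s_n\in H_2$, so all of these $s_n$ determine a single coset $C_0=s_mH_2$. But a fixed coset meets $B_R(1)$ in at most $|H_2\cap B_{2R}(1)|<\infty$ elements (again by (Q2),(Q3)), contradicting the existence of infinitely many distinct $s_n$. Tracking constants, $C$ on the order of $2N$ suffices.

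I expect the main obstacle to be precisely the interface between the second and third steps: a priori the cosets $s_nH_2$ could accompany $H_1$ out to infinity in infinitely many different directions, each producing large overlap while no two of the cosets fellow-travel one another, so that geometric separation of $H_2$ cannot be applied pairwise. This is exactly what the pigeonhole over the \emph{finite} balls of $H_1$ defuses, by forcing infinitely many of the $s_n$ to share a long common initial segment before separation is invoked. The only remaining care is the bookkeeping of the hyperbolicity constants, ensuring that $R'$, $\epsilon_0$, and $N$ depend only on $R$ and the defining data of the hyperbolically embedded subgroups.
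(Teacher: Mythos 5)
Your proof is correct, and although it opens the same way as the paper's, it finishes by a genuinely different mechanism. Both arguments normalize so that $1\in H_1\cap \nei{sH_2}{R}$, extract a far point of $H_1$ in the coarse intersection, and pigeonhole through a finite set of elements of $H_1$ at controlled distance from $1$ (finite by properness of $\dx$ restricted to $H_1$, i.e.\ (Q2)+(Q3)) so that infinitely many configurations share a common anchor. The paper then pigeonholes a \emph{second} time, inside $H_2$: for each surviving $s_i$ it picks $t_i\in H_2$ with $s_it_i$ near the anchor $a$, uses properness of $\dx$ on $H_2$ to fix $t_i=b$ with $\dx(1,b)>N$, and concludes via the WPD condition of $H_2$ (Definition \ref{def:WPD}, Lemma \ref{lem:HEareWPD}) that $s_1^{-1}s_2$ ranges over a finite set. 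You instead stay geometric: thin triangles make $[1,w_m]$ and $[1,w_n]$ fellow-travel for length $N$, so after translating, $\diam\bigl(H_2\cap \nei{s_m^{-1}s_nH_2}{\e_0}\bigr)\geq N-2R'$ with $\e_0$ depending only on $R$, the quasiconvexity constants and $\delta$ (crucially, not on $N$), and geometric separation (Q1) of Theorem \ref{thm:addQ} forces $s_m^{-1}s_n\in H_2$; then all the pigeonholed $s_n$ lie in one coset, and the distinct elements $s_m^{-1}s_n\in H_2\cap B_{2R}(1)$ contradict properness of $\dx$ on $H_2$. The two endgames are essentially equivalent here --- the paper itself remarks that in the hyperbolic case WPD can be deduced from Theorem \ref{thm:addQ}, and your argument in effect inlines that deduction --- but they trade different things: the WPD route, imported from \cite{SistoZ}, does not need hyperbolicity of $\ga(G,X)$ and so is the more portable blackbox, whereas your route uses thin-triangle fellow-traveling and hence the hyperbolicity hypothesis in an essential way; since the lemma assumes hyperbolicity anyway, this costs nothing, and your version is the more self-contained, using only the elementary characterization (Q1)--(Q3). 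The constant-order issues you flag ($\e_0$ fixed before $N$, $N\geq R(\e_0)+2R'$, threshold $C$ of order $2N$) are exactly the right ones and all check out.
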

\begin{proof}
We can enlarge $X$ by adding finitely many elements to ensure that $X\cap H_i$ is a generating set of $H_i$.

Notice that there exists $R'\geq R$, depending on $R$ and the quasiconvexity constants of $H_1,H_2$ only, so that for every $d$ if there exists $a\in H_1\cap\nei{sH_2}{R}$ satisfying $\dx(1,a)\geq d$ then there exists $a'\in H_1\cap\nei{sH_2}{R'}$ satisfying $\dx(1,a')= d$.

Fix $R>0$, and let $R'$ be as above. Since $H_2$ satisfies the WPD condition (see Definition \ref{def:WPD}), letting $\e=2R'$,  there is $N$ such that for every $b\in H_2$ satisfying that $\dx(1,b)>N$ we have that 

\begin{equation}\label{eq:H2WPD}
|\{g\in G\mid \dx(1,g)<\e, \, \dx(b,gb)<\e\}|<\infty.
\end{equation}
We put $C=N+2R'$.

Let $S\subset G$ be the set
$$S=\{ s\in G \mid \dx(1,s)\leq R, \, \diam(H_1\cap \nei{sH_2}{R})\geq C\}.$$

Assume that $S$ is infinite. 
Since the metric $\dx$ restricted to  $H_1$ is proper, by the definition of $R'$ there is an infinite subset $S_1$ of $S$ and $a\in H_1$, 
such that $a\in H_1\cap \nei{sH_2}{R'}$ for all $s\in S_1$ 
and $\dx(1,a)=C$.
For each $s_i\in S_1$, let $t_i\in H_2$ such that
 $\dx(a, s_i t_i)\leq R'$.
Note that 
$$\dx(1,t_i)=\dx(s_i, s_it_i)\geq \dx(1,a)-\dx(1, s_i ) -\dx(a, s_it_i)\geq C-2R'=N
.$$ 
Since we similarly have that $\dx(1,t_i) \le C+2R'$, and the metric $\dx$ restricted to $H_2$ is proper,
there is an infinite subset $S_2$ of $S_1$ and $b\in H_2$ such that $t_i=b$ for all $s_i\in S_2$. 

In particular, for all $s_1,s_2\in S_2$, 
$$ \dx(1,b)>N \quad \text{ and } \quad
\dx(s_1,s_2)\leq 2R\leq\e \quad \text{ and } \quad
\dx(s_1b,s_2b)\leq 2R'=\e,
$$
or equivalently,
$$ \dx(1,b)>N \quad \text{ and } \quad
\dx(1,s_1^{-1}s_2)\leq 2R\leq\e \quad \text{ and } \quad
\dx(b,s_1^{-1}s_2b)\leq 2R'=\e.
$$
By \eqref{eq:H2WPD}, i.e.  the WPD condition of $H_2$, there are only finitely many different possibilities for $s_1^{-1}s_2$. 
\end{proof}
 
 \begin{prop}\label{prop:algint=geoint}
Let $G$ be a group, let $X$ be a generating set of $G$ so that $\ga(G,X)$ is hyperbolic, and let $H_1,H_2$ be hyperbolically embedded in $(G,X)$. Then for every $R\geq 0$, we have $\dhau(H_1 \cap \nei{H_2}{R}, H_1\cap H_2)<\infty$.
\end{prop}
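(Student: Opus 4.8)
The plan is to prove finiteness of the Hausdorff distance by establishing the only nontrivial inequality. Since $1\in H_1\cap H_2$ we have $H_1\cap H_2\subseteq H_1\cap\nei{H_2}{R}$, so every point of $H_1\cap H_2$ lies at distance $0$ from the larger set; thus I would reduce the statement to finding a constant $C'=C'(R)$ with $\dx(a,H_1\cap H_2)\le C'$ for all $a\in H_1\cap\nei{H_2}{R}$. I would argue by contradiction, assuming a sequence $a_n\in H_1\cap\nei{H_2}{R}$ with $\dx(a_n,H_1\cap H_2)\to\infty$, and for each $n$ choosing $b_n\in H_2$ with $\dx(a_n,b_n)\le R$.

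The key idea is to normalize each configuration by translating on the left by $a_n^{-1}$, which moves $a_n$ to the identity. Setting $s_n:=a_n^{-1}b_n$, I have $\dx(1,s_n)=\dx(a_n,b_n)\le R$, and since $b_n\in H_2$ also $a_n^{-1}H_2=s_nH_2$. An elementary coset computation, using $a_n\in H_1$, then gives
\[
a_n^{-1}(H_1\cap H_2)=H_1\cap a_n^{-1}H_2=H_1\cap s_nH_2 ,
\]
so by left-invariance of the metric $\dx(a_n,H_1\cap H_2)=\dx\!\big(1,\,H_1\cap s_nH_2\big)$. Because $\dx(1,s_n)\le R$ forces $1\in\nei{s_nH_2}{R}$, and $H_1\cap s_nH_2\subseteq H_1\cap\nei{s_nH_2}{R}$, this yields
\[
\diam\big(H_1\cap\nei{s_nH_2}{R}\big)\ \ge\ \dx\big(1,H_1\cap s_nH_2\big)\ =\ \dx(a_n,H_1\cap H_2)\ \longrightarrow\ \infty .
\]

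At this point I would invoke Lemma \ref{lem:finitenessdoublecosets}, which for the fixed $R$ produces a threshold $C$ such that only finitely many $s$ with $\dx(1,s)\le R$ satisfy $\diam(H_1\cap\nei{sH_2}{R})\ge C$. By the displayed growth, for all large $n$ the element $s_n$ lies in this finite set, so by pigeonhole a single value $s$ equals $s_n$ for infinitely many $n$. For those indices $H_1\cap s_nH_2=H_1\cap sH_2$ is one fixed subset of $G$, whence $\dx(1,H_1\cap s_nH_2)$ is a fixed finite number independent of $n$; this contradicts the divergence just established and completes the argument.

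I do not expect a serious obstacle, because all of the genuine analytic input --- hyperbolicity of $\ga(G,X)$, the quasiconvexity of $H_1,H_2$, and the WPD property of $H_2$ --- is already packaged into Lemma \ref{lem:finitenessdoublecosets}. The only real content of this proposition is the normalization by $a_n^{-1}$, which reinterprets ``$a_n$ is a point of the coarse intersection far from the algebraic intersection'' as ``$s_nH_2$ is one of the finitely many cosets whose $R$-neighbourhood meets $H_1$ in a set of large diameter.'' The one point I would check carefully is that $\dx(1,H_1\cap s_nH_2)$ depends only on $s_n$ and not on the chosen representative $a_n$; this is exactly what makes the final pigeonhole collapse the divergent distances to a single finite value.
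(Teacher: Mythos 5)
Your proof is correct, and although it rests on the same key input as the paper --- Lemma \ref{lem:finitenessdoublecosets} --- the route from that lemma to the proposition is genuinely different and more elementary. The paper normalizes so that a $\d_{H_1}$-closest point of $H_1\cap H_2$ to $h_1\in H_1\cap\nei{H_2}{R}$ is $1$, takes geodesics $\gamma_1\subset H_1$ and $\gamma_2\subset H_2$ from $1$ to $h_1$ and $h_2$, uses hyperbolicity and quasiconvexity to make them $R'$-fellow-travel, assigns to each vertex $x\in\gamma_1$ an offset $s_x$ with $xs_x\in\gamma_2$, applies the lemma after translating by $x^{-1}$ (via $H_1\cap\nei{s_xH_2}{R'}=x^{-1}(H_1\cap\nei{H_2}{R'})$) to bound the number of possible offsets, and pigeonholes \emph{along the geodesic}: if $\d_{H_1}(1,h_1)>C+N+1$, two vertices share an offset and $yx^{-1}\in H_1\cap H_2$ is strictly closer to $h_1$, contradicting minimality. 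You instead translate the whole configuration by $a_n^{-1}$, observe that $1$ and the nonempty set $a_n^{-1}(H_1\cap H_2)=H_1\cap s_nH_2$ both lie in $H_1\cap\nei{s_nH_2}{R}$, so that divergence of $\dx(a_n,H_1\cap H_2)=\dx(1,H_1\cap s_nH_2)$ forces $\diam\bigl(H_1\cap\nei{s_nH_2}{R}\bigr)\to\infty$ and hence puts $s_n$ in the lemma's finite set, and then pigeonhole \emph{over that finite set} to collapse the divergent distances to a single fixed value. This avoids the fellow-traveling and thin-triangle machinery entirely (hyperbolicity and quasiconvexity enter only through the lemma), and the point you flag at the end is indeed sound: $H_1\cap s_nH_2$ depends only on the coset $s_nH_2$, and its nonemptiness (it contains $a_n^{-1}$, since $1\in H_1\cap H_2$) is what makes $\dx(1,H_1\cap sH_2)$ finite. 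The trade-off is effectivity: the paper's minimality argument bounds the Hausdorff distance explicitly by $C+N+1$ in the word metric of $H_1$, whereas your argument gives finiteness with the non-effective bound $\max\bigl\{C,\max_s \dx(1,H_1\cap sH_2)\bigr\}$ over the finitely many exceptional $s$, quantities that are finite but not controlled by the lemma's constants. One could even drop the sequence and contradiction: your computation shows directly that on $H_1\cap\nei{H_2}{R}$ the function $a\mapsto\dx(a,H_1\cap H_2)$ takes only finitely many values above the lemma's threshold.
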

\begin{proof}
Again, we enlarge $X$ by adding finitely many elements to ensure that $X\cap H_i$ is a generating set of $H_i$.

Notice that $H_1\cap H_2 \subseteq H_1 \cap \nei{H_2}{R}$, so we only need to prove that any point in  $H_1 \cap \nei{H_2}{R}$ is at a bounded distance from a point in $H_1\cap H_2$.

Let $h_1\in H_1 \cap \nei{H_2}{R}$ and $h_2\in H_2$ such that 
$\dx(h_1,h_2)\leq R$. Let $h\in H_1\cap H_2$ such that
$\d_{H_1}(h,h_1)$ is minimal, where $\d_{H_1}$ is the path metric on $H_1$ (which is quasi-isometric to $\dx$ restricted to $H_1$). Without loss of generality, we can assume that $h=1$. Let $\gamma_i$ be geodesics in $H_i$ from $1$ to $h_i$. By quasiconvexity and hyperbolicity, there exists $R'$ such that 
$\dhau(\gamma_1,\gamma_2)< R'$ where $R'$ only depends on $R$, $\delta$, and the quasiconvexity constants of $H_1$ and $H_2$.

For every vertex $x\in \gamma_1$, let $s_x$ be so that $xs_x$ is a vertex $\gamma_2$, and $\dx(x,xs_x)\leq R'$. By Lemma \ref{lem:finitenessdoublecosets}, and the observation that $H_1 \cap N_{R'}(s_xH_2) = x^{-1}(H_1 \cap N_{R'} (H_2))$,
there is $C$ and $N$ such if $\d_{H_1}(1, h_1)>C$, then there at most $N$ possibilities for $s_x$.
If $\d_{H_1}(1,h_1)>C+N+1$, then there are distinct vertices $x,y\in \gamma_1$ such that $s_x=s_y$. We assume that $x$ is  closer than $y$ to $1$ (in the $\d_{H_1}$ metric).

Observe that $yx^{-1}\in H_1$. Observe also that $yx^{-1}\in H_2$, since $yx^{-1} = (ys_x)(s_x^{-1}x^{-1})$ .
Notice that $$\d(yx^{-1}, h_1)\leq \d_{H_1}(yx^{-1},y)+\d_{H_1}(y,h_1)=\d_{H_1}(1,x)+\d_{H_1}(y,h_1)<\d_{H_1}(1, h_1)$$ which contradicts the minimality of $\d_{H_1}(1,h_1)$. 
\end{proof}

 \subsection{Extension of quasimorphisms}
 In this subsection, we shall prove a bounded cohomology application of Proposition \ref{thm:main}. 
 
 Recall that, given $q_i$ a quasimorphism on $H_i\leqslant G$, $i\in I$, we say that $\{q_i\}$ intersection-compatible, if whenever $x\in H_i$ and $y\in H_j$ are $G$-conjugate one has that $q_i(x)=q_j(y)$.
 \begin{remark}\label{conj_inv}
  Notice that, since homogeneous quasimorphisms are conjugation invariant, if the $q_i$ are the restriction of some homogeneous quasimorphism on $G$ then $\{q_i\}$ is intersection-compatible.
 \end{remark}

 
 \begin{theorem}\label{compatible_extension}
  Let $G$ be a group with generating set $X$ and assume that $\Gamma(G,X)$ is hyperbolic. Let $H_1,\dots,H_l$ be hyperbolically embedded in $(G,X)$, and let $q_i$ be a homogeneous quasimorphism on $H_i$.
  Then there exists a homogeneous quasimorphism $q$ on $G$ so that $q|_{H_i}=q_i$ if and only if $\{q_i\}$ is intersection-compatible.
 \end{theorem}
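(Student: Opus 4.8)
The forward implication is Remark \ref{conj_inv}: if $q$ is a homogeneous quasimorphism on $G$ with $q|_{H_i}=q_i$, then $q$ is conjugation invariant, so whenever $x\in H_i$ and $y\in H_j$ are $G$-conjugate we get $q_i(x)=q(x)=q(y)=q_j(y)$. Thus a common extension forces $\{q_i\}$ to be intersection-compatible, and all the content lies in the reverse implication, which I treat now.

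The plan is to adapt the Brooks-type extension of a quasimorphism from a single hyperbolically embedded subgroup (\cite{Brooks,HullOsin,FPS}) to the whole family $\{H_1,\dots,H_l\}$ at once, with intersection-compatibility serving precisely to glue the contributions of the different $H_i$. I would first record the routine reductions: homogeneous quasimorphisms vanish on torsion, so only infinite-order elements matter, and, exactly as in the proof of Proposition \ref{prop:algint=geoint}, I enlarge $X$ by finitely many elements so that $X\cap H_i$ generates $H_i$ and $\dx$ restricted to $H_i$ is comparable to the word metric on $H_i$. The candidate extension $\hat q(g)$ is then built from a geodesic from $1$ to $g$ in $\ga(G,X)$ by summing the values $q_i(\cdot)$ over the maximal subpaths that fellow-travel a coset $aH_i$ of some member of the family, after which one homogenizes to obtain $q$.

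The conceptual crux, and the crucial place where the hypothesis enters, is well-definedness of this sum. A single subpath may fellow-travel both a coset $aH_i$ and a coset $bH_j$; by Proposition \ref{prop:algint=geoint} and Lemma \ref{lem:finitenessdoublecosets} this forces the overlap to lie, up to bounded error, along a translate of an algebraic intersection $H_i\cap H_j^{g}$, and the two elements of $H_i$ and $H_j$ that the subpath then represents are $G$-conjugate. Intersection-compatibility guarantees that $q_i$ and $q_j$ agree on such elements, so the ambiguous subpath contributes the same amount under either assignment and $\hat q$ is unambiguous. Proposition \ref{thm:main} ensures that only finitely many conjugacy types of intersection occur and that they are again hyperbolically embedded, which keeps these overlap corrections finite and uniform.

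I expect the main obstacle to be the uniform quasimorphism estimate, namely bounding $|\hat q(gh)-\hat q(g)-\hat q(h)|$ independently of $g,h$. This I would prove along the lines of the single-subgroup case: compare the component decompositions of $g$, $h$, and $gh$ along a geodesic triangle, using hyperbolicity of $\ga(G,X)$ together with geometric separation and the WPD property (Theorem \ref{thm:addQ}, Lemma \ref{lem:HEareWPD}) to bound the number of components created or destroyed and the length over which cosets of distinct $H_i$ run parallel. The genuinely new feature is that several subgroups and their finitely many, uniformly controlled, intersections must be tracked simultaneously; the consistency established above guarantees that no contribution is miscounted in this bookkeeping. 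Finally, homogenizing $\hat q$ gives a homogeneous quasimorphism $q$, and checking $q|_{H_i}=q_i$ reduces to the observation that, for $h\in H_i$, a geodesic representing $h$ stays uniformly close to the coset $H_i$, so it is essentially a single $H_i$-component contributing $q_i(h)$, the possible sub-overlaps with cosets of other $H_j$ being already reconciled by the well-definedness step.
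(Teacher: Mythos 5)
Your forward direction matches the paper (Remark \ref{conj_inv}) and is fine, but the reverse direction has a genuine gap, in fact two related ones. First, your well-definedness step claims that an ambiguous subpath ``contributes the same amount under either assignment,'' but intersection-compatibility is an \emph{exact} statement about \emph{exactly} $G$-conjugate elements, whereas the elements of $H_i$ and $H_j$ that a fellow-traveling subpath reads off its two cosets are conjugate only up to bounded multiplicative error; since the $q_i$ are merely quasimorphisms, each overlap is reconciled only up to a uniform additive constant, not exactly. This matters because the overlaps are neither short nor rare: by Proposition \ref{thm:main}, $H_1$ is hyperbolic \emph{relative} to representatives $K_1,\dots,K_n$ of the infinite intersections $H_1\cap H_j^g$, so a geodesic from $1$ to $h\in H_1$ can run along linearly many long translates $aK_i$, each fellow-traveling a distinct coset of some $H_j$. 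Your scheme then either double-counts (one global $H_1$-component \emph{plus} interior $H_j$-components) or splits the $H_1$-contribution into linearly many pieces, accumulating a defect-times-number-of-pieces error; in both cases the claim that $q|_{H_1}=q_1$ ``reduces to the observation'' that the geodesic is essentially a single $H_1$-component is unsupported — the sub-overlaps are precisely \emph{not} reconciled by the well-definedness step, and homogenization only absorbs uniformly bounded errors. Second, the uniform defect estimate, which you defer with ``along the lines of the single-subgroup case,'' is the entire technical content even for $l=1$ (this is the main work of \cite{HullOsin,FPS}), and the multi-subgroup bookkeeping with inclusion–exclusion over intersections is exactly the difficulty a correct proof must confront.

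The paper's proof is designed to avoid this simultaneous bookkeeping altogether, and the device it uses is the idea missing from your sketch: induct on $l$ and \emph{correct} rather than glue. By induction there is a homogeneous $q'$ on $G$ extending $q_2,\dots,q_l$; then $q_1'=q_1-q'|_{H_1}$ vanishes on each $K_j$ — this is the only place intersection-compatibility enters, and it is applied exactly, to genuine elements of the intersections, using conjugation-invariance of the homogeneous $q'$. One perturbs $q_1'$ to an alternating quasimorphism $q_1''$ vanishing on $B_C(1)K_jB_C(1)$ and sets $q=\Psi(q_1'')+q'$, where $\Psi$ is the already-established single-subgroup extension operator of \cite{FPS}. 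The geometric input replacing your overlap analysis is Lemma \ref{lem:proj_is_peripheral} (resting on Proposition \ref{prop:algint=geoint} and Lemma \ref{lem:finitenessdoublecosets}): projections of $H_j$, $j\geq 2$, onto any coset $gH_1$ land uniformly close to a translate of some $K_i$, where $q_1''$ vanishes, so $\Psi(q_1'')|_{H_j}\equiv 0$ on the nose and no error accumulates. To salvage your direct construction you would essentially have to re-prove the FPS defect estimates with intersection corrections built in; the inductive reduction to the known $l=1$ case is the cleaner route.
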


 We briefly recall the construction of extensions of quasicocycles from \cite{FPS}, specializing it to quasimorphisms. Let $H\hookrightarrow_h (G,X)$. Let $\hat G$ be obtained from $\ga(G,X)$ by adding, for every coset $gH$ of $H$, a vertex $c(gH)$ connected to all elements of $gH$ by an edge of length $1/4$. For $g,x\in G$, denote by $\pi_{gH}(x)$ the set of all entrance points in $gH$ of geodesics in $\hat{G}$ from $x$ to $c(gH)$. Let $D\geq 0$ be sufficiently large. 
 
 For $g,x\in G$, define the trace $tr_{gH}(x)\in\mathbb R G$ of $x$ onto $gH$ to be $0$ if $\diam_{gH}(\pi_{gH}(\{1,x\}))\leq D$, and let $tr_{gH}(x)$ be the average of all $y^{-1}z$ for $y\in\pi_{gH}(1)$ and $z\in\pi_{gH}(x)$ otherwise. Notice that it does not depend on the choice of coset representative $g$.
 
 Let $q$ be an alternating quasimorphism on $H$, meaning that $q(h)=-q(h^{-1})$ for each $h\in H$. Then one can define
 $$\Theta(q)(x)=\sum_{gH}q(tr_{gH}(x)).$$
 In \cite{FPS} it is shown that $\Theta(q)$ is (well-defined and) an alternating quasimorphism, and $\Theta(q)|_{H}$ is within bounded distance of $q$. We will denote by $\Psi(q)$ the unique homogeneous quasimorphism within bounded distance of $\Theta(q)$. In particular, if $q$ is homogeneous then $\Psi(q)|_{H}=q$. 
 
 For later purposes, we also record the following lemma about $\pi_{H}$. For $x\in G$, denote by $\rho_H(x)$ the set of all $h\in H$ that minimize the distance from $x$ in $\ga(G,X)$ (this is a uniformly bounded set by quasiconvexity). Denote by $d_H$ a word metric on $H$. We note that the $d_H$-diameter of $\pi_{gH}(x)$ is uniformly bounded as a consequence of \cite[Lemma 2.9]{FPS}.
 
 \begin{lemma}\label{proj_are_same}
  In the notation above, there exists $C$ so that, for each $x\in G$, $\pi_{H}(x)$ is within Hausdorff $d_H$-distance $C$ of $\rho_H(x)$.
 \end{lemma}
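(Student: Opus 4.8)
The plan is to prove the estimate by comparing single representatives and then invoking uniform boundedness of both sets. Concretely, $\pi_H(x)$ has uniformly bounded $d_H$-diameter by \cite[Lemma 2.9]{FPS}, and $\rho_H(x)$ has uniformly bounded $\dx$-diameter by quasiconvexity of $H$ (Remark \ref{HEareStable}); since $X\cap H$ quasi-isometrically embeds $H$ by (Q3), the latter set also has uniformly bounded $d_H$-diameter, and a $\dx$-bound between two points of $H$ yields a $d_H$-bound. It therefore suffices to produce a single $h\in\pi_H(x)$ and a single $p\in\rho_H(x)$ with $\dx(p,h)$ bounded independently of $x$; the triangle inequality together with the two diameter bounds then gives the desired Hausdorff bound.

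Next I would record the geometry of $\pi_H(x)$. Because every vertex of $H$ is joined to $c(H)$ by an edge of length $1/4$, we have $d_{\hat G}(x,c(H))=d_{\hat G}(x,H)+1/4$, so a geodesic $\sigma$ in $\hat G$ from $x$ to $c(H)$ reaches $c(H)$ through a final cone-edge emanating from its entrance point $h\in\pi_H(x)$, and $\sigma$ meets $H$ only at $h$ (once $\sigma$ touches $H$ it is already adjacent to $c(H)$). Taking $p\in\rho_H(x)$ and following $[x,p]$ in $\ga(G,X)$ by the cone edge $p\to c(H)$ shows $d_{\hat G}(x,h)=d_{\hat G}(x,H)\le \dx(x,p)=\dx(x,H)$. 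Moreover $\sigma$ cannot come within the quasiconvexity constant $\kappa$ of $H$ before its terminal approach: any such approach at a vertex $u$ would let $\sigma$ reach $c(H)$ through a point of $H$ near $u$ in $\hat G$-length at most $\kappa+1/4$, so that $u$ is already within bounded $\hat G$-distance of the end.

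The heart of the argument is to convert this $\hat G$-statement into the $\dx$-statement $\dx(x,h)\le \dx(x,H)+C_1$; granting this, coarse uniqueness of nearest-point projection onto the quasiconvex set $H$ in the hyperbolic space $\ga(G,X)$ forces $\dx(p,h)\le C_1'$, completing the proof. To obtain it I would de-cone $\sigma$: replace each maximal excursion $a_k\to c(g_kH)\to b_k$ through a cone vertex (with $g_kH\ne H$, since $\sigma$ avoids $H$ until $h$) by a geodesic $[a_k,b_k]$ of $\ga(G,X)$, which stays uniformly close to $g_kH$ by quasiconvexity, obtaining a path $\bar\sigma$ from $x$ to $h$. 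The key point, which is exactly the bounded backtracking property of geodesics in coned-off graphs over hyperbolically embedded subgroups in the spirit of bounded coset penetration \cite{DGO}, is that $\bar\sigma$ is an unparametrised quasigeodesic: consecutive excursion cosets, and each excursion coset together with $H$, fellow-travel only boundedly by (Q1) geometric separation, so no excursion wastes length. Since $\bar\sigma$ is then a quasigeodesic from $x$ into $H$ that stays $\kappa$-far from $H$ until its endpoint $h$, the standard projection picture in $\ga(G,X)$ places $h$ within bounded $\dx$-distance of $p=\rho_H(x)$, which is the required inequality.

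I expect the main obstacle to be precisely this quasigeodesic property of the de-coned path $\bar\sigma$. A geodesic in $\hat G$ can reach a $\dx$-distant point of $H$ very cheaply by riding a coset $g'H$ that bridges $x$ to a far region of $H$, so the whole content of the lemma is that (Q1) geometric separation prevents any such bridge from depositing the geodesic on $H$ far from where $\ga(G,X)$-geodesics from $x$ already enter $N_\kappa(H)$; making the bounded backtracking estimate uniform across all cosets met by $\sigma$ is the delicate step. By contrast, the reductions, the conversions between $\dx$ and $d_H$, and the coarse uniqueness of projection are routine hyperbolic-geometry bookkeeping.
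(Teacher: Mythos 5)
There is a genuine gap, and it sits exactly where you predicted: the entire weight of your argument rests on the claim that the de-coned path $\bar\sigma$ is a uniform unparametrized quasigeodesic of $\ga(G,X)$, and the justification offered --- that (Q1) of Theorem \ref{thm:addQ} bounds the fellow-traveling of consecutive excursion cosets, ``so no excursion wastes length'' --- is not a proof. Geometric separation gives a \emph{per-junction} bounded-backtracking constant, but the number of excursions of $\sigma$ is of order $d_{\hat G}(x,c(H))$, which is unbounded, and the segments of $\sigma$ between excursions can have length $1$; so the standard broken-geodesic criterion (long pieces, small Gromov products at corners) does not apply, and naively accumulating the junction constants yields quasigeodesic constants depending on the number of excursions rather than uniform ones. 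A genuine local-to-global or projection-based argument is required, and \cite{DGO} does not supply a ready-made bounded-coset-penetration theorem in this setting (possibly infinite $X$, $\ga(G,X)$ hyperbolic); the tools actually available here are the projection lemmas of \cite{FPS}, and once you prove your de-coning claim from those you have essentially reconstructed the paper's argument with extra overhead. There is also a secondary flaw: your claim that $\bar\sigma$ stays $\kappa$-far from $H$ until $h$ does not follow from the observation that a vertex $u\in N_\kappa(H)$ lies within $\hat G$-distance $\kappa+1/4$ of the end of $\sigma$. The terminal portion of $\sigma$ of $\hat G$-length at most $\kappa+1/4$ can still contain a coset excursion whose replacement geodesic $[a_k,b_k]$ is $\dx$-enormous, and such a replacement geodesic can dip into $N_\kappa(H)$ at points $\dx$-far from $h$ (only the \emph{diameter} of the dip is controlled by (Q1)); so even granting quasigeodesicity, the final ``standard projection picture'' step needs repair.

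For contrast, the paper's proof avoids the hard direction of the comparison between $\ga(G,X)$ and $\hat G$ altogether. It uses only the easy direction --- geodesics of $\ga(G,X)$ are unparametrized quasigeodesics of $\hat G$ \cite[Proposition 2.6]{KapovichRafi} --- applied to the geodesic $\gamma$ running from a nearest point $p\in\rho_H(x)$ \emph{toward} $x$. Via \cite[Lemma 2.8]{FPS} (a definite jump in $\pi_H$ forces any $\hat G$-geodesic through $c(H)$, together with $\diam_H(\pi_H(gH))\leq B$ for $gH\neq H$), it locates the first time $t$ at which $\pi_H(\gamma(t))$ is $B$-close to $\pi_H(x)$; the exact identity $d(\gamma(t),H)=t$, valid precisely because $\gamma$ emanates from a nearest point, means a single bound on $\dx(\gamma(t),x')$ for $x'\in\pi_H(x)$ bounds both $t$ and $\dx(p,x')$. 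Crucially, the passage from $\hat G$-closeness of $\gamma(t)$ and $x'$ to $\dx$-closeness is performed only across a path of a priori bounded $\hat G$-length (at most $2\delta$): minimality of $t$ rules out any coset $gH$ with a large projection gap between $\gamma(t)$ and $x'$, so one can surger the short $\hat G$-geodesic around its finitely many apices into a $\dx$-path of uniformly bounded length. In other words, the global de-coning comparison you need is replaced by a one-step, bounded-length surgery. If you wish to salvage your route, you must either prove the de-coning statement from \cite[Lemmas 2.8--2.10]{FPS} --- at which point the paper's direct argument is shorter --- or cite a bounded-coset-penetration theorem valid for hyperbolically embedded subgroups in this generality, which you have not done.
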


 \begin{proof} 
 We will use the fact that, by \cite[Lemma 2.8]{FPS}, there exists $B\geq 1$ so that if $w,y$ satisfy $d_H(\pi_H(w),\pi_H(y))\geq B$ then any geodesic in $\hat{G}$ from $w$ to $y$ goes through $c(H)$. Moreover, for each $g\not\in H$ we have $\diam_H(\pi_H(gH))\leq B$. Finally, we will also use the fact that any geodesic in $Y=\ga(G,X)$ is an unparametrized quasigeodesic of $\hat{G}$ with uniform constants by \cite[Proposition 2.6]{KapovichRafi}.
 
 Let $h\in\rho_h(x)$. If $d_H(h,\pi_H(x))\leq 10B$ then we are done, so suppose that this is not the case. Let $\gamma$ be a geodesic in $\ga(G,X)$ from $h$ to $x$, parametrized by arc length. Since $\diam_H(\pi_H(x))\leq B$, there exist a minimal integer $t$  so that $d_H(\pi_H(\gamma(t)),\pi_H(x))\leq B$.
  
  We now provide a uniform bound $R$ on $d_Y(\gamma(t),x')$, for some fixed $x'\in\pi_H(x)$. This in turn yields a bound on $d_Y(h,x')$ and hence $d_H(h,x')$. In fact, $d_Y(\gamma(t),H)=t$, so we must have $t\leq R$, and hence we get $d_Y(h,x')\leq 2R$.
  
  Let us bound $d_Y(\gamma(t),x')$. Let $\alpha$ be a geodesic in $\hat{G}$ from $x$ to $x'$. First of all, since $\gamma$ is an unparametrized quasigeodesic of the hyperbolic space $\hat{G}$, there exists a uniform $\delta$ so that $d_{\hat{G}}(\gamma(t), x'')\leq \delta$ for some $x''\in\alpha$. Since $d_H(\pi_H(\gamma(t-1)),\pi_H(x''))>B$ (because $\pi_H(x'')\subseteq \pi_H(x)$), we get that any geodesic from $\gamma(t-1)$ to $x''$ goes through $c(H)$ and hence $d_{\hat{G}}(x'',H)\leq \delta$. Hence $d_{\hat{G}}(x'',x')\leq \delta$, because $\alpha$ is a geodesic from $x$ to some point in $\pi_H(x)$. To sum up, we have $d_{\hat{G}}(x',\gamma(t))\leq 2\delta$.
  
  In order to conclude the proof, we now show that there does not exist any coset $gH$ with $d_{gH}(\pi_{gH}(\gamma(t)),\pi_{gH}(x'))>10B$. Once this is done, we can consider a geodesic $\beta$ in $\hat{G}$ from $\gamma(t)$ to $x'$ and replace its subsegments of length $1/2$ around the apices $c(gH)$ that $\beta$ goes through with geodesics in $gH$, thereby obtaining a path of uniformly bounded length.
  
  If we had some $gH$ with $d_{gH}(\pi_{gH}(\gamma(t)),\pi_{gH}(x'))>10B$, then we would have $d_{gH}(\pi_{gH}(\gamma(t-1)),\pi_{gH}(x'))>5B$. But then any geodesic from either $\gamma(t-1)$ or $\gamma(t)$ to $c(H)$ would contain $c(gH)$, and hence we would have $\pi_H(\gamma(t))=\pi_H(c(gH))=\pi_H(\gamma(t-1))$, contradicting the minimality of $t$.
 \end{proof}

\begin{proof}[Proof of Theorem \ref{compatible_extension}]
By Remark \ref{conj_inv}, we only need to show that we can always extend an intersection-compatible family $\{q_1,\dots,q_l\}$ of quasimorphisms. For $l=1$, we can use \cite{HullOsin} or $\Psi$ as constructed above.

  Let now $l\geq 2$ and let $H_1,\dots,H_l<G$ be subgroups and suppose $H_i\hookrightarrow_h (G,X)$. Let $\hat G^1$ be obtained from $\ga(G,X)$ by adding, for every coset $gH_1$ of $H_1$, a vertex $c(gH_1)$ connected to all elements of $gH_1$ by an edge of length $1/4$. Let $K_1,\dots, K_n$ are representatives of the $H_1$-conjugacy classes of subgroups of the form $H_1\cap H_j^g$ where $j\geq 2$ (there are finitely many conjugacy classes by Theorem \ref{thm:main}).
 
 For $g,x\in G$, denote $\pi_{gH_1}(x)$ the set of all entrance points in $gH_1$ of geodesics in $\hat{G}^{1}$ from $x$ to $c(gH)$.
 
 \begin{lemma}\label{lem:proj_is_peripheral}
 	There exists $C\geq0$ with the following property. For each $g\in G$ and $j\in\{2,\dots,l\}$ there exists $h\in G$ and $i\in\{1,\dots,n\}$ so that $hK_i\subseteq gH_1$ and so that for all $x\in H_{j}$ we have $d_{gH_1}(\pi_{gH_1}(x),hK_i)\leq C$.
 \end{lemma}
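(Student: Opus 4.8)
The plan is to first replace the cone-off projection $\pi_{gH_1}$ by an ordinary nearest-point projection in $\ga(G,X)$, and then to identify that projection with a coset of $H_1\cap H_j$. Since left multiplication by $g$ is an isometry of $\hat G^1$ carrying $c(H_1)$ to $c(gH_1)$, Lemma \ref{proj_are_same} transports to the coset $gH_1$ and shows that $\pi_{gH_1}(x)$ lies within uniformly bounded $d_{gH_1}$-distance of $\rho_{gH_1}(x)$, the set of nearest points of $gH_1$ to $x$ in $\ga(G,X)$. Thus it suffices to find $h\in gH_1$ and $i$ so that the whole projection $\rho_{gH_1}(H_j)=\bigcup_{x\in H_j}\rho_{gH_1}(x)$ lies in a bounded $d_{gH_1}$-neighborhood of $hK_i$, with the bound independent of $g$ and $j$. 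Since $H_1$ and $H_j$ are quasiconvex (Remark \ref{HEareStable}) and $\ga(G,X)$ is hyperbolic, I can use the standard fact that the nearest-point projection of one quasiconvex set onto another is coarsely the part of the target lying closest to the source.

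I would first dispose of the case in which $\rho_{gH_1}(H_j)$ has bounded $d_{gH_1}$-diameter; this covers all cosets $gH_1$ far from $H_j$, where $I:=gH_1\cap N_R(H_j)=\emptyset$, as well as those for which $H_1\cap H_j$ is finite. In this case I would pick any $h\in gH_1$ within bounded distance of $\rho_{gH_1}(H_j)$ and let $i$ be arbitrary: since $h\in hK_i\subseteq gH_1$, the conclusion holds with a uniform constant. The substance is the complementary case, where $\rho_{gH_1}(H_j)$ is large. Then $I$ is nonempty of large diameter (for $R$ chosen from the hyperbolicity and quasiconvexity constants), and the projection estimate gives $\rho_{gH_1}(H_j)\subseteq N_{C'}(I)$ in the $d_{gH_1}$-metric for a uniform $C'$, so it is enough to show that $I$ is within bounded $d_{gH_1}$-Hausdorff distance of a single left coset of $H_1\cap H_j$.

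To prove this I would reduce to a regime already controlled by the earlier results. Setting $S=g^{-1}I=H_1\cap N_R(g^{-1}H_j)$ and choosing a base point $p\in S$, left multiplication by $p^{-1}\in H_1$ turns $g^{-1}H_j$ into a coset $s^{-1}H_j$ with $|s|_X\le R$; a direct computation then gives $p^{-1}S=H_1\cap N_R(s^{-1}H_j)$, the coarse intersection of the subgroup $H_1$ with a coset of $H_j$ having a \emph{bounded} representative. On such a coarse intersection I would run the minimal-distance-and-pigeonhole argument proving Proposition \ref{prop:algint=geoint} essentially verbatim, the needed finiteness being exactly that supplied by Lemma \ref{lem:finitenessdoublecosets} (which is already phrased for cosets $sH_j$ with $|s|_X\le R$). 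This shows that $p^{-1}S$, hence $S$, is coarsely the algebraic intersection $H_1\cap g^{-1}H_j$; being nonempty, the latter is a left coset $p'(H_1\cap H_j)$ with $p'\in H_1$, so translating back $I$ is coarsely $gp'(H_1\cap H_j)$.

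Finally I would record that $H_1\cap H_j$ is, by definition, one of the subgroups whose $H_1$-conjugacy classes the $K_i$ represent; since $j$ ranges over the finite set $\{2,\dots,l\}$, I may arrange the finite family $K_1,\dots,K_n$ to contain $H_1\cap H_j$ itself for each such $j$ (these subgroups already lie in the relevant conjugacy classes, so this is harmless). With that choice $gp'(H_1\cap H_j)=hK_i$ for $h=gp'\in gH_1$ and the appropriate $i$, whence $hK_i\subseteq gH_1$ and $\rho_{gH_1}(H_j)\subseteq N_{C}(hK_i)$ as required; stability of $H_1\cap H_j$ (Lemma \ref{int}) makes it undistorted, so $d_{gH_1}$ and $\dx$ are comparable and all bounds are uniform. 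The step I expect to be the crux is the middle one: upgrading Proposition \ref{prop:algint=geoint} to the coarse intersection of a subgroup with a coset of $H_j$ \emph{uniformly in $g$}. The reduction to a bounded coset representative is what makes this possible, since that is precisely the situation covered by Lemma \ref{lem:finitenessdoublecosets}; the other delicate point is matching the coarse intersection to an honest left coset of a chosen representative rather than of a conjugate, which is why I prefer to take $H_1\cap H_j$ among the $K_i$ directly (and which is in any case harmless for the intended application, since homogeneous quasimorphisms are conjugation invariant, cf. Remark \ref{conj_inv}).
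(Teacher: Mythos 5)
Your skeleton matches the paper's proof almost step for step: use Lemma \ref{proj_are_same} to replace $\pi_{gH_1}$ by the nearest-point projection, split into the bounded and unbounded cases, normalize so that the coset of $H_j$ has a representative of length at most $R$, and invoke Lemma \ref{lem:finitenessdoublecosets} together with (the argument of) Proposition \ref{prop:algint=geoint}. But the crux step of your third paragraph is wrong. You claim that running the pigeonhole argument of Proposition \ref{prop:algint=geoint} on the coarse intersection $H_1\cap \nei{s^{-1}H_j}{R}$ shows it is coarsely the set-theoretic intersection $H_1\cap s^{-1}H_j$, which you then take to be nonempty and hence a coset $p'(H_1\cap H_j)$. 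Neither part survives scrutiny. First, the shortcut elements produced by the pigeonhole land in the \emph{conjugate}, not the coset: if $x,y\in\gamma_1\subseteq H_1$ and $xs_x,ys_x$ lie on a geodesic inside $s^{-1}H_j$, then $yx^{-1}=(ys_x)(xs_x)^{-1}\in s^{-1}H_js=H_j^{s^{-1}}$, so the argument identifies the coarse intersection with a coset of $H_1\cap H_j^{s^{-1}}$, not of $H_1\cap H_j$. Second, $H_1\cap s^{-1}H_j$ can be empty while the coarse intersection is infinite, even after your normalization puts $1$ in the coarse intersection. Concretely, in $F_2=\langle a,b\rangle$ take $H_1=\langle ab\rangle$ and $H_j=\langle a(ab)a^{-1}\rangle$; both are quasiconvex and malnormal, hence hyperbolically embedded. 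The coset $a^{-1}H_j=\{(ab)^ka^{-1}\}$ lies in the $1$-neighborhood of $H_1$, so $H_1\cap \nei{a^{-1}H_j}{1}$ is infinite and contains $1$, yet $(ab)^m=(ab)^ka^{-1}$ would force $a\in\langle ab\rangle$, so $H_1\cap a^{-1}H_j=\emptyset$; moreover $H_1\cap H_j=\{1\}$, while $H_1\cap H_j^{a^{-1}}=H_1$ is infinite. In the lemma's terms, for $g=a$ the projection of $H_j$ onto $aH_1$ is coarsely all of $aH_1$, whereas any coset $h(H_1\cap H_j)$ is a single point, so your conclusion with $K_i=H_1\cap H_j$ fails outright.

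This example also explains why the lemma is stated with representatives $K_1,\dots,K_n$ of \emph{all} $H_1$-conjugacy classes of subgroups $H_1\cap H_j^g$, $g\in G$: the projection sets genuinely are coarse cosets of conjugates $H_1\cap H_j^{s^{-1}}=K_i^c$ ($c\in H_1$) that need not be $H_1$-conjugate to, let alone equal to, $H_1\cap H_j$; shrinking the list to $\{H_1\cap H_j\}$ is exactly what cannot be done. The paper's route through the same reduction is to note that, by Lemma \ref{lem:finitenessdoublecosets}, only finitely many normalized configurations $(s,j)$ occur, and then to apply Proposition \ref{prop:algint=geoint} to the pair of \emph{subgroups} $H_1^g$ and $H_j$ (a conjugate of a hyperbolically embedded subgroup is hyperbolically embedded, and the finiteness of configurations keeps all constants, including the translation matching $H_1^g\cap H_j$ to a coset $hK_i\subseteq gH_1$, uniformly bounded). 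If you repair your argument by replacing $H_1\cap s^{-1}H_j$ with a coset of $H_1\cap H_j^{s^{-1}}$ and use the finiteness of the pairs $(s,j)$ to bound the conjugating element taking $H_1\cap H_j^{s^{-1}}$ to its representative $K_i$, you recover the paper's proof.
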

 
 \begin{proof}
 In view of Lemma \ref{proj_are_same}, we can consider the closest point projection $\rho_{gH_1}$ instead of $\pi_{gH_1}$. If the diameter of $\rho_{gH_1}(H_j)$ is sufficiently large, then, by quasiconvexity, $gH_1$ and $H_j$ are at uniformly bounded distance in $Y$, and in fact up to multiplying on the left by an element of $h_j$, we can assume that $\dx(1,g)$ is uniformly bounded. In particular, by Lemma \ref{lem:finitenessdoublecosets}, there are finitely many choices for $g$. Since $\rho_{gH_1}(H_j)$ is the coarse intersection of $gH_1$ and $H_j$, by Proposition \ref{prop:algint=geoint} we have that $\rho_{gH_1}(H_j)$ is within bounded Hausdorff distance of the intersection of $H_1^g$ and $H_j$, as required.
 \end{proof}

 Let $\{q_i\}$ be intersection-compatible homogeneous quasimorphisms. By the inductive hypothesis there exists a homogeneous quasimorphism $q'$ on $G$ that extends $q_2,\dots,q_l$.
 
 Set $q'_1=q_1-q'|_{H_1}$, so that $q'_1$ is a quasimorphism on $H_1$. In particular, by intersection-compatibility, $q'_1|_{K_j}$ vanishes for each $j$, and hence $q'_1|_{B_{C}(1)K_jB_C(1)}$ is bounded, where $B_C(1)$ is the ball of radius $C$ around $1$ in $H_1$. Hence, we can perturb $q'_1$ up to bounded error into an alternating quasimorphims $q''_1$ to ensure that $q''_1(h)=0$ whenever $h\in B_{C}(1)K_jB_C(1)$ for some $j$.
 
 Finally, we can set $q=\Psi(q''_1)+q'$. Notice that $q|_{H_1}$ is within bounded distance from $q_1$. Moreover, from Lemma \ref{lem:proj_is_peripheral} and the definition of $\Psi$ we see that $\Psi(q''_1)|_{H_j}$ is identically $0$ for all $j\geq 2$. Hence, again for $j\geq2$, $q|_{H_j}$ is within bounded distance of $q'$, whence of $q_j$, as required.
\end{proof}

\subsection{Adding a hyperbolically embedded subgroup}

By \cite[Theorem 6.14]{DGO}, any acylindrically hyperbolic group $G$ contains a unique maximal finite normal subgroup $E(G)$. Let $F_2$ denote a non-abelian free group of rank 2.

\begin{prop}\label{prop:add_subgroup}
 Let $G$ be a group and let $H_1,\dots,H_l$ be subgroups of infinite index with $H_i\h (G,X_i)$ with $\Gamma(G,X_i)$ hyperbolic. Then there exists a hyperbolically embedded subgroup $H$ so that $\{H,H_i\}\h (G,X_i)$ for every $i$ so that $H\cong F_2\times E(G)$, In particular $H\cap H_i^g$ is finite for every $g\in G$ and every $i$.
\end{prop}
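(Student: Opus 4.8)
The plan is to realize $H$ as the subgroup generated, together with $E(G)$, by two suitably generic loxodromic elements. First I would record that $G$ is acylindrically hyperbolic: each $H_i$ is a proper (infinite index) hyperbolically embedded subgroup, and we may assume at least one is infinite, so by Osin's characterization \cite{Osin} $G$ is acylindrically hyperbolic and carries a unique maximal finite normal subgroup $E(G)$ \cite[Theorem 6.14]{DGO}. I would then invoke the existence of \emph{special} loxodromic elements, i.e.\ loxodromic $g$ with $E_G(g)=\langle g\rangle\times E(G)$; in particular such $g$ centralizes $E(G)$. For a single hyperbolic structure these, and more generally generic transverse loxodromics, are standard.

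The technical heart is a \emph{simultaneity} statement: I must produce two elements $a,b\in G$ that are at once loxodromic and WPD for every action $G\curvearrowright \ga(G,X_i)$, that are pairwise independent, and whose endpoints are transverse to all cosets of every $H_i$ in each $\ga(G,X_i)$. The main obstacle is precisely that the $X_i$ are distinct, so an element loxodromic in $(G,X_i)$ may be elliptic or distorted in $(G,X_j)$. I would attack this by choosing, for each $i$, an infinite-order (hence loxodromic) element $t_i$ of $(G,X_i)$, arranging by genericity that the $t_i$ have pairwise transverse axes in each structure, and then forming a single element $t=t_1^{n_1}\cdots t_l^{n_l}$ with the exponents $n_i$ taken very large. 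A north--south/ping-pong analysis carried out \emph{separately} in each $\ga(G,X_j)$ — where in the metric $\d_{X_j}$ the dominant factor $t_j^{n_j}$ translates while the remaining factors either have bounded displacement (if elliptic) or transverse axes (if loxodromic) — should show that $t$ acts loxodromically in every structure at once. Replacing $t$ by a special power and running the same argument for an independent $s$ yields the desired $a,b$. (Alternatively one might hope to pass to a single generating set dominating all the $X_i$ and reduce to the classical case, but transporting the conclusion back to each $X_i$ is itself delicate, so the direct simultaneity argument seems more robust.)

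Granting $a,b$, I would set $F=\langle a^N,b^N\rangle$ for $N$ large. Ping-pong in any one of the structures gives $F\cong F_2$, and since $F$ is torsion-free while $E(G)$ is finite, $F\cap E(G)=1$; as $a,b$ centralize $E(G)$ by specialness, $E(G)$ is central in $F\cdot E(G)$ and hence $H:=F\cdot E(G)\cong F_2\times E(G)$. To establish $\{H,H_i\}\h(G,X_i)$ for each fixed $i$, I would verify the three conditions of Theorem \ref{thm:addQ} in $\ga(G,X_i)$: finite generation (Q2) is clear; quasi-isometric embedding (Q3) holds because $a,b$ generate a quasiconvex free subgroup by ping-pong and their powers are loxodromic; and geometric separation (Q1) of the family $\{H,H_i\}$ follows from the transversality of the endpoints of $a,b$ from those of $H_i$ together with the WPD property. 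Equivalently, one invokes the DGO mechanism for adjoining elementary closures of independent loxodromic WPD elements to a hyperbolically embedded family \cite[Theorem 6.14]{DGO}, obtaining $\{H_i,E_G(a),E_G(b)\}\h(G,X_i)$, inside which $H$ sits as a quasiconvex subgroup.

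Finally, the ``in particular'' is immediate: a hyperbolically embedded family is almost malnormal \cite[Proposition 2.10]{DGO}, and since $H$ and $H_i$ are distinct members of the family $\{H,H_i\}$, the intersection $H\cap H_i^g$ is finite for every $g\in G$ and every $i$. Thus the only genuinely new difficulty beyond the standard single-structure construction of an $F_2\times E(G)$ is the simultaneous control of the loxodromic dynamics across the finitely many distinct hyperbolic Cayley graphs $\ga(G,X_i)$; everything else is an application of the DGO/AMS machinery recorded in Theorem \ref{thm:addQ} together with the structural properties of $E(G)$.
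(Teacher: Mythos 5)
Your high-level architecture (find two simultaneously well-behaved elements, take $\langle a,b\rangle\cdot E(G)$, then adjoin to each family via the DGO/AMS machinery of Theorem \ref{thm:addQ}) is reasonable, and you correctly identify the crux: simultaneity across the distinct structures $\ga(G,X_1),\dots,\ga(G,X_l)$. But your deterministic resolution of that crux has genuine gaps. First, ``infinite-order (hence loxodromic) element $t_i$ of $(G,X_i)$'' is false when $X_i$ is infinite: e.g.\ if $G$ is hyperbolic relative to $H_i$ and $X_i$ contains $H_i$, every infinite-order element of $H_i$ is elliptic in $\ga(G,X_i)$. Second, your case analysis of the non-dominant factors of $t=t_1^{n_1}\cdots t_l^{n_l}$ in $\ga(G,X_j)$ into ``elliptic, hence bounded displacement'' versus ``loxodromic with transverse axes'' omits parabolic isometries, which cannot be excluded because nothing in the hypotheses makes the action on $\ga(G,X_j)$ acylindrical; moreover even for an elliptic $w$ the displacement $d(x,wx)$ is only bounded at the basepoint, not uniformly in $x$, so the north--south estimate requires a Gromov-product/ping-pong hypothesis that must be arranged in all $l$ structures at once --- and your ``arranging by genericity'' is circular, since producing elements generic for several inequivalent hyperbolic structures simultaneously is precisely the problem. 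Third, even granting $t$ loxodromic in every structure, loxodromicity alone does not let you adjoin $E_G(t)$ to $\{H_i\}$: you need the WPD property in each $(G,X_i)$, and your sketch never verifies it for the product element.

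The paper resolves all three issues at once by a different, probabilistic route. It first applies \cite[Theorem 5.4, Lemma 5.12]{Osin} to enlarge each $X_i$ to $Y_i\supseteq X_i$ with $H_i\h(G,Y_i)$ and the action of $G$ on $\Gamma(G,Y_i\cup H_i)$ acylindrical and non-elementary with $H_i$ bounded --- this eliminates parabolics and supplies WPD wholesale. It then invokes \cite{MaherSisto}: for independent random walks $(W_n),(Z_n)$ in a finite-index subgroup $G'$ acting trivially on $E(G)$ (which also handles your ``specialness''/centralization of $E(G)$), the probability that $\langle W_n,Z_n,E(G)\rangle\cong F_2\times E(G)$ is hyperbolically embedded in $(G,Y_i\cup H_i)$ tends to $1$ for each fixed $i$; since there are finitely many $i$, a single pair of elements works for all structures simultaneously --- this is exactly the simultaneity your product construction was meant to achieve. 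Finally, the transfer back to $(G,X_i)$, which you dismissed as delicate, is in fact the easy step: $X_i\subseteq Y_i\cup H_i$ makes the identity $(G,d_{X_i})\to(G,d_{Y_i\cup H_i})$ Lipschitz, and conditions (Q1)--(Q3) of Theorem \ref{thm:addQ} then pass directly to give $\{H,H_i\}\h(G,X_i)$. Your closing ``in particular'' step via almost malnormality \cite[Proposition 2.10]{DGO} agrees with the paper; but as written, the loxodromic/WPD simultaneity at the heart of your argument is asserted rather than proved, so the proposal does not constitute a complete proof.
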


\begin{proof}
  Notice that $G$ contains a finite-index subgroup $G'$ that acts trivially by conjugation on $E(G)$.

 By \cite[Theorem 5.4, Lemma 5.12]{Osin}, for every $i$ there exists $Y_i\supseteq X_i$ so that $H_i\h (G,Y_i)$ and $G$ acts on the hyperbolic space $\Gamma(G,Y_i\cup H_i)$ acylindrically and non-elementarily, and clearly $H_i$ has bounded orbits. By \cite{MaherSisto}, given independent random walks $(W_n),(Z_n)$ on $G'$, the probability that $\langle W_n,Z_n, E(G)\rangle$ is $F_2\times E(G)$ and hyperbolically embedded in $(G,Y_i\cup H_i)$ goes to $1$ as $n$ tends to infinity. Since  $X_i\subseteq Y_i\cup H_i$, the identity map from $(G,d_{X_i})$ to $(G,d_{Y_i\cup H_i})$ is Lipschitz. From this and Theorem \ref{thm:addQ}, it is easily seen that whenever $K$ is hyperbolically embedded in $(G,Y_i\cup H_i)$, we have that $\{K,H_i\}$ is hyperbolically embedded in $(G,X_i)$, and the proof is complete.
\end{proof}

 \begin{cor}\label{EHbnot_injective}
   Let $G$ be an acylindrically hyperbolic group and let $H_1,\dots,H_l\h (G,X)$ be subgroups of infinite index, where $\Gamma(G,X)$ is hyperbolic. Then the product of the restriction maps $res:EH^2_b(G,\mathbb{R})\to \Pi_{i=1}^l EH^2_b(H_i,\mathbb{R})$ is not injective.
 \end{cor}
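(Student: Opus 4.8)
The plan is to produce a single homogeneous quasimorphism on $G$ that is not a homomorphism but whose restriction to each $H_i$ vanishes; passing to classes under the identification $EH^2_b(\cdot,\mathbb R)\cong Q^h(\cdot,\mathbb R)/\mathrm{Hom}(\cdot,\mathbb R)$ this exhibits a nonzero element of $EH^2_b(G,\mathbb R)$ in the kernel of $res$. First I would apply Proposition \ref{prop:add_subgroup} with all $X_i=X$, obtaining a subgroup $H\cong F_2\times E(G)$ with $H\cap H_i^g$ finite for all $g\in G$ and all $i$, and with $\{H,H_i\}\h(G,X)$ for every $i$. The next step is to upgrade this to the statement that the whole family $\{H,H_1,\dots,H_l\}$ is hyperbolically embedded in $(G,X)$: by Theorem \ref{thm:addQ} it suffices to check (Q1)--(Q3), and since (Q2) and (Q3) are properties of the individual members while geometric separation (Q1) is a condition on (ordered) pairs, every required pair is covered either by the hypothesis $\{H_1,\dots,H_l\}\h(G,X)$ or by $\{H,H_i\}\h(G,X)$.

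Next I would fix a homogeneous quasimorphism $q_H$ on $H$ that is not a homomorphism, for instance the pullback along the projection $H\cong F_2\times E(G)\to F_2$ of a homogeneous Brooks quasimorphism, and form the family consisting of $q_H$ on $H$ together with the zero quasimorphism on each $H_i$. The crux is that this family is intersection-compatible. Every compatibility relation except those involving $H$ reads $0=0$, so it suffices to treat a $G$-conjugacy $y=gxg^{-1}$ with $x$ or $y$ in $H$. If $x\in H$ is conjugate to $y\in H_i$, then $x\in H\cap H_i^{g^{-1}}$, which is finite by Proposition \ref{prop:add_subgroup}; if $x,y\in H$ are conjugated by $g\notin H$, then $x\in H\cap H^{g^{-1}}$, finite by almost malnormality of the hyperbolically embedded $H$ (\cite[Proposition 2.10]{DGO}); and if $x,y\in H$ with $g\in H$, then $q_H(x)=q_H(y)$ by $H$-conjugation invariance. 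In the finite-intersection cases the relevant element is torsion, and since every homogeneous quasimorphism vanishes on torsion we get $q_H(x)=0$, matching the zero value on the other side.

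With intersection-compatibility established, Theorem \ref{compatible_extension} applied to $\{H,H_1,\dots,H_l\}\h(G,X)$ furnishes a homogeneous quasimorphism $q$ on $G$ with $q|_H=q_H$ and $q|_{H_i}=0$ for every $i$. This $q$ finishes the proof: each $q|_{H_i}=0$ is (the zero) homomorphism, so $[q|_{H_i}]=0$ in $EH^2_b(H_i,\mathbb R)$ and hence $res([q])=0$; on the other hand $q$ cannot be a homomorphism since its restriction $q|_H=q_H$ is not one, so $[q]\neq 0$ in $EH^2_b(G,\mathbb R)$. Therefore $res$ has nontrivial kernel and is not injective.

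I expect the only genuinely delicate point to be the passage from the pairwise assertions $\{H,H_i\}\h(G,X)$ supplied by Proposition \ref{prop:add_subgroup} to hyperbolic embeddedness of the full collection $\{H,H_1,\dots,H_l\}$ in the single pair $(G,X)$; as indicated this is forced by the pairwise character of the criterion in Theorem \ref{thm:addQ}. The remainder is formal, relying only on the extension theorem and the vanishing of homogeneous quasimorphisms on torsion.
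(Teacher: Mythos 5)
Your proposal is correct and follows essentially the same route as the paper: take $H\cong F_2\times E(G)$ from Proposition \ref{prop:add_subgroup}, extend a non-homomorphism homogeneous quasimorphism on $H$ together with the zero quasimorphisms on the $H_i$ via Theorem \ref{compatible_extension}, and observe the resulting class is nonzero but restricts trivially. You merely spell out two details the paper leaves implicit --- the verification of intersection-compatibility (via finiteness of $H\cap H_i^g$, almost malnormality of $H$, and vanishing of homogeneous quasimorphisms on torsion) and the upgrade from the pairwise statements $\{H,H_i\}\h(G,X)$ to the joint family via the pairwise nature of (Q1) in Theorem \ref{thm:addQ} --- and you correctly sharpen the paper's ``non-trivial'' to ``not a homomorphism.''
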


 \begin{proof}
  Let $H$ be as in Proposition \ref{prop:add_subgroup}, and let $\phi$ be any non-trivial homogeneous quasimorphism on $H$. Then by Theorem \ref{compatible_extension} there exists a homogeneous quasimorphism $\overline \phi$ on $G$ that restricts to $0$ on each $H_i$ but restricts to $\phi$ on $H$. The class $\alpha\in EH^2_b(G,\mathbb{R})$ represented by $\overline\phi$ has $res(\alpha)=0$, but $\alpha$ is non-trivial.
 \end{proof}

 \section{Further Applications}
 A number of applications of finite width and bounded packing can be found in the literature. 
 In this Section, we mention a couple of these consequences for stable subgroups. Note that we have listed a number of examples of stable subgroups in the introductory section (Corollary \ref{cor:egs}).\\
 
 \noindent {\bf (1) Dimension of 
 cubulation} \\ In \cite{sageev1, sageev2} (see also \cite{hruska-wise}), Sageev proves the following:
 
 \begin{prop}
 	\label{fincub}
 	Suppose
 $	H$
 	is a finitely generated  codimension $1$
 	subgroup
 	of a finitely generated group
 	$G$ satisfying bounded packing.  Then the corresponding
 	CAT(0)
 	cube complex
 	is finite dimensional.
 \end{prop}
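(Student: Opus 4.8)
The plan is to recall Sageev's cubulation and to observe that the dimension of the resulting complex is governed by the maximal number of pairwise transverse walls, a quantity which the bounded packing hypothesis controls directly. Since the statement is essentially due to \cite{sageev1, sageev2} and \cite{hruska-wise}, the real content of the argument is the passage from the combinatorial notion of dimension of the cube complex to the geometric bounded packing hypothesis.

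First I would set up the wall structure. Because $H$ has codimension $1$ in $G$, there is a constant $r$ so that, in the Cayley graph $\Gamma$, the complement $\Gamma \setminus N_r(H)$ has at least two \emph{deep} components, i.e.\ components not contained in any finite neighborhood of $H$. Declaring these deep components, together with their analogues for all cosets $gH$, to be half-spaces produces a $G$--invariant pocset of walls, and Sageev's construction yields a CAT$(0)$ cube complex $C$ on which $G$ acts. The key structural fact is that the $n$--cubes of $C$ correspond to $n$--tuples of pairwise transverse (crossing) walls; in particular $\dim C$ equals the supremum of the cardinalities of collections of pairwise transverse walls.

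Second I would show that transversality forces the underlying cosets to be close. If the wall determined by $g_1 H$ is transverse to the wall determined by $g_2 H$, then all four quadrants cut out by the two walls are nonempty, and since each wall is carried by $N_r(g_i H)$, this produces a point of $N_r(g_1 H)$ at bounded distance from $N_r(g_2 H)$. Thus there is a uniform $D = D(r)$ such that, whenever two such walls cross, the $D$--neighborhoods of $g_1H$ and $g_2H$ intersect. A collection of $k$ pairwise transverse walls therefore yields cosets whose $D$--neighborhoods pairwise intersect; one must only account for the finitely many half-spaces carried by a single coset, a number bounded by the number of filtered ends of the pair $(G,H)$, which is finite since $H$ is finitely generated.

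Finally I would invoke the bounded packing hypothesis: for the fixed $D$ there is $N = N(D)$ bounding the number of distinct cosets of $H$ whose $D$--neighborhoods pairwise intersect. Combined with the bound on the number of walls carried by a single coset, this caps the size of any collection of pairwise transverse walls, and hence $\dim C < \infty$. The main obstacle is the transversality-implies-closeness step: making the constant $D$ uniform and correctly translating the ``four nonempty quadrants'' crossing condition into an intersection of bounded neighborhoods of cosets is precisely where the Hruska--Wise machinery enters, and it is the crux linking the combinatorial dimension of $C$ to the bounded packing assumption.
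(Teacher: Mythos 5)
The paper itself offers no proof of this proposition: it is quoted verbatim from Sageev \cite{sageev1,sageev2} (see also \cite{hruska-wise}), so the comparison is really against the argument of those sources. Your skeleton matches that argument: cubes of the dual CAT$(0)$ complex correspond to families of pairwise crossing walls, crossing walls force the underlying cosets to be uniformly $D$--close, and bounded packing then caps the size of any such family. But one step of your write-up is genuinely false as stated, namely the claim that the number of half-spaces carried by a single coset is bounded by the number of filtered ends of the pair $(G,H)$, ``which is finite since $H$ is finitely generated.'' The number of filtered ends (equivalently, of deep components of $\Gamma\setminus N_r(H)$, taken over all $r$) can be infinite even for finitely generated $H$: for $G=F_2=\langle a,b\rangle$ and $H=\langle a\rangle$, the complement of any neighborhood of $H$ has infinitely many deep components (an infinite hanging tree at each boundary point of the neighborhood of the axis), and for $H=\{1\}$ the filtered ends are just the ends of $F_2$. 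So if, as in your first step, you build the wall system out of \emph{all} deep components over all cosets, a single coset can carry infinitely many walls, and your final count does not close: bounded packing bounds the number of cosets involved, but not the number of walls.

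The repair is exactly the construction Sageev and Hruska--Wise actually use: codimension $1$ provides a \emph{single} $H$-almost invariant set $A$ with $HA=A$ and with $A$ and $G\setminus A$ both deep, and the wall system consists only of the translates $gA$. Then $g_1H=g_2H$ forces $g_1A=g_2A$, so each coset carries exactly one wall, distinct pairwise-crossing walls yield distinct pairwise $D$--close cosets, and bounded packing bounds the dimension with no detour through filtered ends. Note also that your crossing-implies-closeness step needs more than ``the four quadrants are nonempty and each wall is carried by $N_r(g_iH)$'': half-spaces need not be connected, so nonemptiness of quadrants does not by itself produce a point near both cosets. The standard argument uses that $N_r(gH)$ is connected for $r$ large (since $H$ is finitely generated), that the coboundary of $gA$ lies in $N_r(gH)$, and that every component of a half-space contains an endpoint of an edge of its coboundary; if $d(g_1H,g_2H)$ is large one shows $N_r(g_2H)$ lies entirely in one side of the first wall and deduces that a quadrant is empty. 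You correctly flag this as the crux and defer to \cite{hruska-wise}, which is acceptable for a cited result, but as written it is an assertion rather than an argument.
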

 
 Combining with Theorem \ref{widthpack} we have
 as an immediate Corollary:
 
 \begin{cor} \label{fincubcor}	Suppose
 	$	H$
 	is a finitely generated stable codimension $1$
 	subgroup
 	of a finitely generated group
 	$G$.  Then the corresponding
 	CAT(0)
 	cube complex
 	is finite dimensional.
 \end{cor}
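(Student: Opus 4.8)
The plan is to derive this as an immediate consequence of the bounded packing established earlier in the paper, fed into Sageev's cubulation theorem quoted as Proposition \ref{fincub}. The hypotheses of that proposition are that $H$ be a finitely generated codimension $1$ subgroup of a finitely generated group $G$ that satisfies bounded packing, and its conclusion is precisely that the dual CAT$(0)$ cube complex is finite dimensional. By assumption $H$ is finitely generated, stable, and codimension $1$ in $G$, so of the three required inputs the only one not already given by hypothesis is bounded packing.

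First I would supply this last hypothesis: applying Theorem \ref{th:bounded_packing} (equivalently, the case $l=1$ of Theorem \ref{widthpack}) to the single stable subgroup $H$, I conclude that $H$ has bounded packing in $G$. This is exactly the missing ingredient of Proposition \ref{fincub}. With finite generation, codimension $1$, and bounded packing now all in hand, I would invoke Proposition \ref{fincub} verbatim to obtain that the corresponding CAT$(0)$ cube complex is finite dimensional, which is the desired conclusion.

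There is no genuine obstacle here, and this should be flagged as such: the substantive content is already packaged in Theorem \ref{th:bounded_packing}, which upgrades stability to bounded packing, and in the cubulation machinery of \cite{sageev1, sageev2} cited as a black box in Proposition \ref{fincub}. Thus the corollary is purely a matter of checking that the hypotheses of Proposition \ref{fincub} are met, and the proof reduces to a single line combining these two results.
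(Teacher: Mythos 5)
Your proof is correct and is exactly the paper's argument: the paper likewise obtains the corollary immediately by feeding bounded packing of a stable subgroup (Theorem \ref{widthpack}, whose single-subgroup case is Theorem \ref{th:bounded_packing}) into Sageev's result quoted as Proposition \ref{fincub}. Citing Theorem \ref{th:bounded_packing} directly rather than Theorem \ref{widthpack} is an immaterial difference.
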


 \medskip

 \noindent {\bf (2) Relative Rigidity} \\ In \cite{mahan-relrig}, the first author showed that a uniformly proper
 bijection between configurations of weak hulls of limit sets of quasiconvex subgroups of hyperbolic
 groups comes from a quasi-isometry of the groups, thus extending a result of Schwartz \cite{schwarz-inv}.
 The same proof works here once we prove the following strengthening of bounded packing, which provides a 
 "coarse Helly property" for stable subsets:
 
 \begin{proposition}[Coarse centers]\label{cbc}
 Let $\HH =\{H_1, \cdots, H_l\}$ be a finite collection of 
 stable subgroups of a finitely generated group $G$. For each $D \ge0$ there exists $R\ge0$ such that if $\mathcal{C}$ is a collection of pairwise $D$--close cosets of subgroups of $\HH$, then  there is an $x \in G$ such that,
 \[
 N_R(x) \cap gH_i \neq \emptyset
 \]
 for each $gH_i \in \mathcal{C}$.
 \end{proposition}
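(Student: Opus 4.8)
The plan is to first collapse the problem to a bounded configuration and then to recognize it as a coarse Helly property inside a single hyperbolic coset. By bounded packing (Theorem \ref{widthpack}) there is $N=N(D)$ with $\#\mathcal{C}\le N$, so it suffices to produce, for each fixed $m\le N$, a radius $R=R(D,m)$ valid for every collection of $m$ pairwise $D$--close cosets; the maximum over $m\le N$ then depends only on $D$. Left--translating by $G$ (an isometry of $\Gamma$) we may assume one coset is a subgroup $H=H_{i_1}\in\HH$, which is hyperbolic by Theorem \ref{coco=stable}; write $\mathcal{C}=\{H=C_1,C_2,\dots,C_m\}$ and let $d_H$ be a word metric on $H$, quasi-isometric to $\dx$ since $H$ is undistorted.

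First I would record the three--coset version, which is precisely the thin--triangle argument already used for bounded packing. For any triple $C_1,C_i,C_j$ (automatically pairwise $D$--close) choose witnesses $p\in N_D(C_i)\cap N_D(C_j)$, $q\in N_D(H)\cap N_D(C_j)$ and $r\in N_D(H)\cap N_D(C_i)$. By Lemma \ref{lem:basic} the geodesics $[q,r],[r,p],[p,q]$ lie in $N_{R_1}(H),N_{R_1}(C_i),N_{R_1}(C_j)$ respectively and are uniformly stable, so by Lemma \ref{lem:cordes} the triangle is $\delta$--thin; taking a point $w\in[q,r]$ within $\delta$ of the other two sides gives $d(w,H)\le R_1$ and $d(w,C_i),d(w,C_j)\le R_1+\delta=:R'$, with $R',\delta$ depending only on $D$ and the stability data of $\HH$. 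Projecting $w$ to a nearest point $b_{ij}\in H$ then yields $d(b_{ij},C_i),d(b_{ij},C_j)\le 2R'$.

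Next, set $R_0=2R'$ and define the \emph{shadow} $P_i=H\cap N_{R_0}(C_i)\subseteq H$ for each $i$, so that $P_1=H$. Each $P_i$ is nonempty because $C_i$ is $D$--close to $H$, and the previous step shows $b_{ij}\in P_i\cap P_j$, so the $P_i$ pairwise intersect. Moreover each $P_i$ is uniformly quasiconvex in $(H,d_H)$: if $p,q\in P_i$ then a $G$--geodesic $[p,q]$ lies in $N_{R_1}(C_i)$ by Lemma \ref{lem:basic}, and since $H$ is undistorted and stable the corresponding $H$--geodesic fellow--travels $[p,q]$, hence lies in a uniform neighborhood of $C_i$ and thus in an enlarged shadow. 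We are therefore left with $m\le N$ uniformly quasiconvex, pairwise intersecting subsets of the hyperbolic space $(H,d_H)$.

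The main step — and the place where global hyperbolicity of $H$ is essential, in contrast with the ambient group $G$ — is the coarse Helly property: finitely many uniformly quasiconvex, (coarsely) pairwise intersecting subsets of a hyperbolic space admit a common coarse intersection point, with radius bounded in terms of the hyperbolicity and quasiconvexity constants, the coarseness of the intersections, and the number $m$ of sets. I would invoke this as a standard fact, or prove it by approximating the finitely many relevant witness points by a tree, in which pairwise intersecting subtrees satisfy the exact Helly property, and transporting the common vertex back. Applying it to $\{P_i\}$ produces $x\in H$ with $d_H(x,P_i)\le R_2$ for all $i$, where $R_2$ depends only on $D$ and $\HH$ because $m\le N(D)$. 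Finally, since $P_i\subseteq N_{R_0}(C_i)$, we obtain $d(x,C_i)\le R_2+R_0$ for every $C_i\in\mathcal{C}$, so $R:=R_0+R_2$ works. The expected main obstacle is establishing the hyperbolic coarse Helly property with constants controlled independently of the particular configuration (beyond its bounded cardinality); the remainder is the thin--triangle bookkeeping already present in the bounded packing argument.
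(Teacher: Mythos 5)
Your proof is correct, but it takes a genuinely different route from the paper's. The paper forms the union $Y=\bigcup N_D(gH_i)$ over all $gH_i\in\mathcal{C}$ and proves, by induction on $\#\mathcal{C}$ using Lemma \ref{lem:cordes}, that $Y$ is itself a stable --- hence hyperbolic --- subset of the Cayley graph, in which the cosets are quasiconvex; the coarse center is then produced inside $Y$ by invoking \cite[Lemma 7]{niblo2003coxeter} or \cite[Lemma 3.3]{mahan-relrig}. You instead translate one coset to a subgroup $H\in\HH$ and work entirely inside the single hyperbolic space $(H,d_H)$, replacing each remaining coset $C_i$ by its shadow $P_i=H\cap N_{R_0}(C_i)$; the pairwise thin-triangle argument (the same one used in the proof of Theorem \ref{th:bounded_packing}) produces the common points $b_{ij}\in P_i\cap P_j$, and a coarse Helly property in $(H,d_H)$ finishes. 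Both arguments funnel through the same two ingredients --- the cardinality bound $\#\mathcal{C}\leq N(D)$ from Theorem \ref{widthpack} and a coarse Helly/center lemma in a hyperbolic space --- and differ in the choice of hyperbolic container: the paper manufactures one ($Y$), at the cost of proving stability of a union of neighborhoods, while you use an existing one ($H$), at the cost of controlling the shadows. The Helly statement you invoke as standard is exactly what the paper cites, so you could apply those same references inside $H$.

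One step deserves care: your justification of uniform quasiconvexity of $P_i$ shows only that $H$-geodesics between points of $P_i$ lie in an \emph{enlarged} shadow $H\cap N_{R_0'}(C_i)$, which is weaker than lying in a uniform $d_H$-neighborhood of $P_i$ itself. This is harmless for the tree-approximation version of Helly that you sketch --- the subtrees spanned by the images of $\{b_{ij}\}_j$ pairwise intersect, and the pulled-back center lands on geodesics between the $b_{ij}$, hence in enlarged shadows, hence within controlled distance of $C_i$, which is all the proposition asks for --- but if you want genuine quasiconvexity of the shadows you should upgrade it via a coarse-intersection statement in the style of Eq.\ \ref{near_conj}: after left-translating by an element of $H$, each $C_i$ has a representative $u_iH_{j(i)}$ with $u_i\in B_D(1)$, so there are only finitely many types, and $H\cap N_L(C_i)$ is uniformly Hausdorff-close to a coset of the stable subgroup $H\cap H_{j(i)}^{u_i}$, which is quasiconvex in $H$ since it is undistorted by Lemma \ref{int}. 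With that caveat addressed (or with the tree argument written out), your proof is complete and of essentially the same difficulty as the paper's.
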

 
 \begin{proof}
Fix a Cayley graph $\Gamma$ for $G$ and for simplicity assume that $\Gamma$ contains Cayley graphs of the $H_i$ as subgraphs.
By Theorem \ref{widthpack}, $\#\mathcal{C} \le N$ for some constant $N\ge 0$ depending only on $D$. 

Consider the connected subgraph of $\Gamma$,
\[
Y = \bigcup N_D(gH_i),
\]
where the union is over all $gH_i \in \mathcal{C}$.
\begin{claim}
The subgraph $Y$ is $f'$--stable in $X$, where $f'$ depends only on $D$ and the stability constants of the subgroups in $\HH$.
\end{claim}

Let us see how the proposition follows from the claim. Since $Y$ is stable in $X$, $Y$ is $\delta$--hyperbolic, with $\delta$ depending only on $f'$ (\cite[Lemma 3.3]{DT15}) and the subgraphs $gH_i$ are quasiconvex. Hence, we may apply either \cite[Lemma 7]{niblo2003coxeter} or \cite[Lemma 3.3]{mahan-relrig} which give an $R\ge0$, depending on $N$ and $\delta$, and an $x\in Y$ such that the $R$--ball about $x$ in $Y$ meets each $gH_i \in \mathcal{C}$. This will complete the proof.

Hence, it remains to establish the claim. We do so by induction on $N = \#\mathcal{C}$. For $N = 1$, there is nothing to prove. In general, write $Y = Y_N \cup Z$ where $Z = N_D(gH_i)$ for some $gH_i \in \mathcal{C}$ and $Y_N$ is the union of $D$--neighborhoods of the cosets in $\mathcal{C} \setminus \{gH_i\}$. Note that $Y_N$ is $f'$--stable by the induction hypothesis and $Z$ is stable with stability function depending only on $D$ and that of $H_i$. Let $a,b \in Y$ and assume (as we may) that $a \in Y_N$ and $b\in Z$. Further, pick any $c \in Y_N \cap Z$ and choose geodesics $\alpha = [a,c]$, $\beta = [b,c]$, and $\gamma = [b,c]$. By stability of $Y_N$ and $Z$, $\alpha$ and $\beta$ are uniformly stable and we conclude from Lemma \ref{lem:cordes}, that $\gamma$ is contained in a uniformly bounded neighborhood of $Y$ and is uniformly stable. Hence, we conclude that $Y$ is uniformly stable in $\Gamma$. (Note that ``uniform'' here depends on $N$, which is bounded by Theorem \ref{widthpack}.) 
 \end{proof}
 
 Let $G_1, G_2$ be finitely generated groups
 with Cayley graphs $\Gamma_1, \Gamma_2$, word metrics $d_1, d_2$,
 and stable subgroups $H_1, H_2$. Let $\Lambda_1$, $\Lambda_2$ be the limit sets of $H_1, H_2$ in $\partial_M G_1, \partial_M G_2$ respectively. Let $\JJ_i$,
 $i=1,2$, be the collection  of translates of $H_{wi}$, the weak hulls
  of $\Lambda_i$ in $\Gamma_i$. Each $d_i$ induces a pseudo-metric  on the collection $\JJ_i$ for $i = 1, 2$
  by regarding $J, K \in \JJ_i$ as closed subsets of $\Gamma_i$. We continue calling this induced metric $d_i$.
  
  \begin{defn} A bijective set map $\phi$ from $\JJ_1 \rightarrow \JJ_2$  is said to be 
  	{\bf uniformly proper } if there exists a function $f: \natls \rightarrow \natls$ such that 
  	\begin{enumerate}
  		\item For all $J, K \in \JJ_1$,
  		$d_{1} (J,K)) \leq n \Rightarrow d_{2} (\phi(J) ,\phi(K) \leq f(n)$,
  		\item For all $J, K \in \JJ_2$,
  		$d_{2} (J,K)) \leq n \Rightarrow d_{1} (\phi^{-1}(J) ,\phi^{-1}(K) \leq f(n)$.
  	\end{enumerate}
  \end{defn}

\begin{defn}
	A map $f$ from $\Gamma_1$ to $\Gamma_2$ is said to pair the sets $\JJ_1$ and $\JJ_2$ as $\phi$ does if there exists a function $h: \natls \to \natls$ such that for all $p \in \Gamma$, $J \in \JJ_1$,
	$d_1 (p ,J) \leq n \Rightarrow d_2 (f(p) ,
	\phi (J)) \leq h(n)$.
\end{defn}

The axioms of \cite[Section 3.4]{mahan-relrig} are now verified exactly as in that paper. (The main point in \cite{mahan-relrig} is the existence of a coarse barycenter, which in our context is established by Proposition \ref{cbc}.)
 We thus have the following:
  
  \begin{prop}\label{relrigprop}
  	Let $G_1, G_2$, $\Gamma_1, \Gamma_2$,  $d_1, d_2$ and $\JJ_1, \JJ_2$ be as above. Let
  	$\phi: \JJ_1 \to \JJ_2$ be uniformly proper.
  	Then there exists a quasi-isometry $q$ from $\Gamma_1$ to $\Gamma_2$ which pairs the sets $\JJ_1$ and $\JJ_2$ as $\phi$ does (in particular, $\Gamma_1, \Gamma_2$ are quasi-isometric).
  \end{prop}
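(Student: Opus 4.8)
The plan is to reduce the statement to the abstract relative rigidity theorem of \cite{mahan-relrig}, whose hypotheses form a list of axioms on a group acting on a metric space together with a distinguished collection of closed subsets (a ``pattern''). First I would recall those axioms from \cite[Section 3.4]{mahan-relrig} and then verify them one at a time for the data $(\Gamma_i,\JJ_i)$, $i=1,2$. The point is that almost all of the verification is formal once the correct structural inputs are identified; the one genuinely nontrivial ingredient is the coarse barycenter, which we already have.

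The first batch of axioms is geometric. Each member of $\JJ_i$ is a translate of the weak hull $H_{wi}$ of the stable subgroup $H_i$, so all members of $\JJ_i$ are uniformly stable, equivalently uniformly quasiconvex, subsets of $\Gamma_i$, with nearest-point projections that are coarsely well defined and coarsely Lipschitz; this follows from Lemma \ref{lem:basic} and Lemma \ref{lem:cordes}. The second batch concerns finiteness: only boundedly many members of $\JJ_i$ can be pairwise $D$--close, which is exactly the bounded packing and finite width established in Theorem \ref{widthpack}.

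The crucial axiom, and precisely the one that obstructs the methods of \cite{GMRS} in this generality, is the existence of a coarse barycenter (a ``coarse Helly'' property): for each $D$ there is an $R$ so that any pairwise $D$--close sub-collection of $\JJ_i$ has a point within $R$ of every member. In our setting this is exactly Proposition \ref{cbc}, applied in both $\Gamma_1$ and $\Gamma_2$. With this in hand, the construction of $q$ proceeds verbatim as in \cite{mahan-relrig}: a point $p\in\Gamma_1$ is encoded coarsely by the configuration of members of $\JJ_1$ that pass near it; the uniform properness of $\phi$ transports this configuration to a pairwise close sub-collection of $\JJ_2$; and the coarse barycenter of the image, guaranteed by Proposition \ref{cbc} in $\Gamma_2$, defines $q(p)$ up to bounded ambiguity.

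I expect the main obstacle to be confirming that this assignment is coarsely well defined and a quasi-isometry, i.e.\ that the ambiguity in ``the members of $\JJ_1$ near $p$'' and in the coarse barycenter does not accumulate. However, since Proposition \ref{cbc} supplies uniform control (the constant $R$ depends only on $D$), and uniform properness propagates these bounds from $\Gamma_1$ to $\Gamma_2$ and back through $\phi^{-1}$, the estimates close up exactly as in \cite[Section 3.4]{mahan-relrig}. Running the symmetric argument with the roles of $1$ and $2$ interchanged produces a coarse inverse built from $\phi^{-1}$, so $q$ is a quasi-isometry pairing $\JJ_1$ with $\JJ_2$ as $\phi$ does.
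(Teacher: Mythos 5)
Your proposal matches the paper's proof: the paper likewise reduces the statement to the axiomatic relative rigidity framework of \cite[Section 3.4]{mahan-relrig}, noting that the axioms are verified exactly as in that paper and that the one essential new ingredient is the coarse barycenter (coarse Helly) property supplied by Proposition \ref{cbc}. Your additional remarks on the geometric and finiteness axioms (via Lemma \ref{lem:basic}, Lemma \ref{lem:cordes}, and Theorem \ref{widthpack}) are consistent with, and merely flesh out, the paper's deliberately terse verification.
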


\bibliographystyle{alpha}
\bibliography{width4stable.bbl}

\end{document}